\documentclass[12pt, oneside]{article}   
\textheight245mm\voffset-34mm \textwidth145mm\hoffset-10mm 


\usepackage{amssymb,amsmath,amsthm,enumerate,hyperref,mathtools}     
\usepackage[dvipsnames]{xcolor} 
\usepackage{ulem} 
\hypersetup{
    colorlinks, 
    linkcolor={red!50!black}, 
    citecolor={blue!50!black}, 
    urlcolor={blue!80!black} 
}
\usepackage[displaymath, mathlines]{lineno}  
\usepackage{mathabx}
\usepackage{graphicx,latexsym,amsfonts}   
\usepackage{lineno}
\usepackage{picture}
\usepackage[english]{babel}

\renewcommand{\emph}{\textsl}

\def\reference#1{\href{#1}{Cliquer ici pour voir une r\'ef\'erence.}} 

\setcounter{tocdepth}{3}    
\setcounter{secnumdepth}{3}


%
%


\usepackage{tikz, color}
\usetikzlibrary{decorations,decorations.pathmorphing,decorations.pathreplacing}
\usetikzlibrary{arrows,backgrounds,calc}
\usetikzlibrary{shapes.symbols,shapes.geometric}
\usetikzlibrary{decorations.markings}
 \tikzset{->-/.style={decoration={
  markings,
  mark=at position #1 with {\arrow{stealth'}}}, 
  postaction={decorate}}}


\usepackage[longnamesfirst]{natbib} 
 \theoremstyle{plain}
  \newtheorem{theorem}{Theorem}[section]
  \newtheorem{lemma}[theorem]{Lemma}
  \newtheorem{corollary}[theorem]{Corollary}
  \newtheorem{proposition}[theorem]{Proposition}

  \theoremstyle{definition}
  \newtheorem{definition}[theorem]{Definition}
  
  \newtheorem{problem}[theorem]{Problem}
  \newtheorem{example}[theorem]{Example}
  
  \newtheorem{remark}[theorem]{Remark}


\def\st{{\;\vrule height9pt width1pt depth2pt\;}}

\newcommand{\R}{\mathbb{R}}

\def\st{{\;\vrule height9pt width1pt depth1.5pt\;}}

\def\CCC{{\cal C}}

\def\FFF{{\cal F}}
\def\GGG{{\cal G}}
\def\HHH{{\cal H}}

\def\LLL{{\cal L}}

\def\SSS{{\cal S}}

\def\es{\varnothing}
\def\conv{\operatorname{conv}}    

\def\int{\operatorname{int}}
\def\ex{\operatorname{ex}}        

\def\ssbs{\subsetneq{}}    
\def\sbs{\subseteq{}}      
 \def\smu{\setminus{}}

\mathchardef\ordinarycolon\mathcode`\:
\mathcode`\:=\string"8000
\begingroup \catcode`\:=\active
  \gdef:{\mathrel{\mathop\ordinarycolon}}
\endgroup

\def\T#1{{\rm\textsf{(T#1)}}}
\def\V#1{{\rm\textsf{(V#1)}}}
\hyphenation{counter-example}

\listfiles


\title{Resolutions of Convex Geometries}

\author{\normalsize
Domenico Cantone\thanks{Department of Mathematics and Computer Science, University of Catania, Catania, Italy. Email: domenico.cantone@unict.it.  In honour of Mario Gionfriddo, for his academic career and recent emeritus professorship}$\:$,
Jean-Paul Doignon\thanks{Department of Mathematics, Universit\'e Libre de Bruxelles, Brussels, Belgium. Email:  doignon@ulb.ac.be \textsl{(corresponding author)}}$\:$,
Alfio Giarlotta\thanks{Department of Economics and Business, University of Catania, Catania, Italy. Email: alfio.giarlotta@unict.it}$\:$,
Stephen Watson\thanks{Department of Mathematics and Statistics, York University, Toronto, Canada. Email: watson@mathstat.yorku.ca}
}

\date{March 2, 2021}


\begin{document}

\makeatletter

\def\@fnsymbol#1{\ensuremath{\ifcase#1\or a\or b\or c \or
   d \or e\or \| \or 7\or 8 \or 9 \else\@ctrerr\fi}}
   
   \maketitle
\makeatother


\noindent \textbf{Abstract}. 
Convex geometries \citep{Edelman_Jamison1985} are finite combinatorial structures dual to union-closed antimatroids or learning spaces.
We define an operation of resolution for convex geometries, which replaces each element of a base convex geometry by a fiber convex geometry.  
Contrary to what happens for similar constructions---compounds of hypergraphs, as in \cite{Chein_Habib_Maurer1981}, and compositions of set systems, as in \cite{Mohring_Radermacher1984}---, resolutions of convex geometries always yield a convex geometry.  

We investigate resolutions of special convex geometries: ordinal and affine. 
A resolution of ordinal convex geometries is again ordinal, but a resolution of affine convex geometries may fail to be affine. 
A notion of primitivity, which generalize the corresponding notion for posets, arises from resolutions: a convex geometry is primitive if it is not a resolution of smaller ones. 
We obtain a characterization of affine convex geometries that are primitive, and compute the number of primitive convex geometries on at most four elements.  
Several open problems are listed. 


\section{Introduction}\label{SECT:intro} 
 
\textsl{Convex geometries}, named after \cite{Edelman_Jamison1985}, are finite mathematical structures that capture combinatorial features of convexity from various settings.  
A `resolution' of convex geometries, as we define it here, is a procedure that builds a new convex geometry from given ones. 
Intuitively, this procedure replaces each element of a given `base' convex geometry by another convex geometry, called a `fiber', and consistently defines a new family of convex sets on the union of the fibers. 

Resolutions of convex geometries are related to a similar construction in the field of choice theory, namely `resolutions of choice spaces',\footnote{\label{FOOT:choice_space}A \textsl{choice space} is a pair $(X,c)$, where $X$ is a nonempty set, and $c \colon 2^X \to 2^X$ maps each nonempty set $A \subseteq X$ to a nonempty subset $c(A) \subseteq A$ (and the empty set to the empty set).} recently introduced by \cite{Cantone_Giarlotta_Watson2020+}.
This is hardly surprising in view of an enlightening result of \cite{Koshevoy1999}, who shows that there is a one-to-one correspondence between convex geometries and special choice spaces, called `path independent'. 
Our construction is also reminiscent of the `compounds of hypergraphs' \citep{Chein_Habib_Maurer1981} or `compositions of set systems' \citep{Mohring_Radermacher1984}. 
There is, however, a salient difference between the two constructions: a composition of convex geometries may fail to be a convex geometry, whereas a resolution of convex geometries is always a convex geometry.

In this paper, we examine resolutions of two special types of convex geometries: ordinal and affine. 
We also investigate `primitive' convex geometries, that is, convex geometries that cannot be obtained as resolutions of smaller convex geometries.
In particular, we show that the notion of a primitive convex geometry generalizes the classical notion of a primitive poset. 
We also perform some computations related to `small' convex geometries, showing that among the $6$ convex geometries on three elements only $1$ is primitive, and exactly $12$ of the $34$ geometries on four elements are primitive. 

The paper is organized as follows.
Section~\ref{SECT_Background} collects preliminary facts on convex geometries.
Section~\ref{SECT:resolutions cg} introduces resolutions of convex geometries, and compares them to  compositions.
Section~\ref{SECT:special types of cg} deals with resolutions of affine and ordinal convex geometries. 
Section~\ref{SECT:cg on at most 4 pts} provides a taxonomy of all primitive convex geometries having at most $4$ elements.
Section~\ref{SECT:open problems} collects several questions and open problems. 


\section{Background on Convex Geometries} \label{SECT_Background}

In this section we provide the basic notions and tools for our analysis.


\subsection{Definitions and Examples} \label{SUBSECT_deff_and_examples}

We start with one of the many equivalent definitions of a convex geometry, a notion that is originally due to \cite{Edelman_Jamison1985}.  
Unless otherwise specified, $X$ is a finite nonempty set, which consists of \textsl{points} or \textsl{elements}, depending on the context.  As usual, $2^X$ denotes the family of all subsets of $X$.

\begin{definition}\label{DEF_convex_geometry} 
A \textsl{convex geometry} on a finite nonempty set $X$ is a collection $\GGG$ of subsets of $X$ satisfying the following three axioms:
\begin{enumerate}[\sf (G1)~]
\item $\es\in\GGG$;
\item $\GGG$ is \textsl{closed under intersection}: if $F$ and $G$ are in $\GGG$, then $F \cap G$ is in $\GGG$;
\item $\GGG$ is \textsl{upgradable}: for any $G$ in $\GGG\setminus\{X\}$, there exists $x$ in $X \setminus G$ such that $G \cup \{x\} \in \GGG$.
\end{enumerate}
Here $X$ is the \textsl{ground set} of $\GGG$, and the sets in $\GGG$ are called \textsl{convex}. 
We slightly abuse terminology, and also call the pair $(X,\GGG)$ a \textsl{convex geometry}.  
A convex geometry $(X,\GGG)$ is \textsl{nontrivial} if $\vert X \vert \geqslant 2$, and \textsl{trivial} otherwise.
\end{definition}

Since $X$ is finite, Axioms~\textsf{(G1)} and \textsf{(G3)} of a convex geometry $(X,\GGG)$ imply that $X = \bigcup \GGG \in \GGG$.
Thus, for any $A \in 2^X$, the family of convex sets $G$ such that $A \sbs G$ always contains $X$.
This fact, along with Axiom~\textsf{(G2)}, ensures the soundness of the following notion: 

\begin{definition} \label{DEF_convex_hull}
Let $(X,\GGG)$ be a convex geometry.
For any $A \in 2^X$, the \textsl{convex hull} of $A$ in $X$ is the smallest convex superset of $A$, that is,
$$
\conv_{\GGG}(A) := \:\bigcap \big\{G \in \GGG : A \sbs G \big\}.
$$ 	
Whenever the family $\GGG$ is clear from context, we shall often simplify notation, and write $\conv(A)$ in place of $\conv_{\GGG}(A)$. 
\end{definition}
 
The next example provides six instances of convex geometries on a three element set.  
It is not difficult to prove that any convex geometry on three elements is isomorphic\footnote{Two convex geometries $(X,\GGG_X)$ and $(Y,\GGG_Y)$ are \textsl{isomorphic} if there is a bijection $\sigma \colon X \to Y$ such that, for each $A \in 2^X$, we have $A \in \GGG_X$ if and only if $\sigma(A) \in \GGG_Y$.} to one of these. (The number of convex geometries for additional sizes of the ground set appears in the  Sequence~\href{https://oeis.org/A224913}{\underline{A224913}} in the OEIS, which considers `(union-closed) antimatroids', that is, the structures that are complementary to convex geometries.)  

\begin{example} \label{EX_all_CGs_on_3_elements_up_to_iso} 
	The following are convex geometries on $X = \{x,y,z\}$:
	\begin{itemize}
		\item $\GGG_1= \big\{\es, \{x\}, \{x,y\},X\big\}$; 
		\item $\GGG_2= \big\{\es, \{x\}, \{y\}, \{x,y\},X\big\}$;
		\item $\GGG_3= \big\{\es, \{x\}, \{x,y\}, \{x,z\},X\big\}$;
		\item $\GGG_4= \big\{\es, \{x\}, \{y\}, \{x,y\}, \{x,z\}, X\big\}$;
		\item $\GGG_5= \big\{\es, \{x\}, \{y\}, \{z\}, \{x,y\}, \{x,z\}, X\big\}$;
		\item $\GGG_6= 2^X$.
	\end{itemize}
\end{example}

There are plenty of settings where convex geometries naturally appear (under various names): see, for instance, \cite{Edelman_Jamison1985}, \cite{Goecke_Korte_Lovasz1989}, \cite{Korte_Lovasz_Schrader1991}, \cite{Doignon_Falmagne1999}, and \cite{Falmagne_Doignon2011}.  
For nice historical overviews with reference to additional settings, see  \cite{Monjardet1985,Monjardet1990b,Monjardet2008}. 

In the next two examples we present two important classes of convex geometries \citep{Edelman_Jamison1985}, namely `ordinal' and `affine'. 
In Section~\ref{SECT:special types of cg}, we shall study in detail these two classes with respect to `resolutions'.

\begin{example}\label{EX_ordinal_Convex_Geometries}\textsf{(Ordinal convex geometries)}
Let $(X,\le)$ be a nonempty finite poset.  
Further, let $\GGG$ be the collection of all \textsl{ideals} of $(X,\le)$, that is,
$$
\GGG := \big\{ G \in 2^X \,\st\, (\forall x, y \in X)\; 
(x \le y \, \wedge \, y \in G) \: \Longrightarrow \: x \in G \big\}.
$$
Then $(X,\GGG)$ is a convex geometry, which is the \textsl{ordinal} convex geometry \textsl{derived} from the partial order $\le$ (or, equivalently, from the poset $(X,\le)$).  
\end{example}

Ordinal convex geometries have a very simple characterization:

\begin{theorem}[\citealp{Edelman_Jamison1985}] \label{THM_chrz ordinal cg}
A convex geometry $(X,\GGG)$ is ordinal if and only if $\GGG$ is closed under union.  
Further, if $(X,\GGG)$ is a convex geometry with $\GGG$ closed under union, then there is exactly one partial order $\le$ on $X$ such that $\GGG$ consists of the ideals of $\le$: in fact, $x \le y$ if and only if $x \in \conv(\{y\})$, for all $x,y \in X$.
\end{theorem}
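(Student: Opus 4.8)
The plan is to prove the two directions separately; the forward (``if'') implication together with the uniqueness claim carries essentially all the content, while the converse is immediate. For the ``only if'' direction I would argue straight from the definition of an ordinal convex geometry: if $\GGG$ is the family of ideals of a poset $(X,\le)$ and $G_1,G_2\in\GGG$, then for any $x\le y$ with $y\in G_1\cup G_2$ the point $y$ lies in some $G_i$, whence $x\in G_i\sbs G_1\cup G_2$; thus $G_1\cup G_2$ is again an ideal, and $\GGG$ is closed under union.

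For the converse, assume $\GGG$ is closed under union and define the relation $\le$ on $X$ by $x\le y \iff x\in\conv(\{y\})$. Reflexivity is immediate, since $y\in\conv(\{y\})$; and transitivity is the standard monotonicity of a closure operator: if $y\in\conv(\{z\})$, then $\conv(\{z\})$ is a convex set containing $y$, so $\conv(\{y\})\sbs\conv(\{z\})$ by minimality, and $x\in\conv(\{y\})$ yields $x\in\conv(\{z\})$. Neither step uses closure under union.

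Antisymmetry is the main obstacle, and it is precisely the place where the convex-geometry axioms---rather than mere closure---must be invoked. My plan is to use Axiom~\textsf{(G3)} through a maximal chain. Suppose $x\le y$ and $y\le x$; by the minimality argument above this forces $\conv(\{x\})=\conv(\{y\})=:C$, so \emph{a convex set contains $x$ if and only if it contains $y$}. Starting from $\es$ and applying \textsf{(G3)} repeatedly, I would build a chain $\es=G_0\ssbs G_1\ssbs\cdots\ssbs G_n=X$ of convex sets with $\vert G_i\smu G_{i-1}\vert=1$ at each step; finiteness of $X$ (together with $X\in\GGG$) guarantees it terminates at $X$. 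Let $i$ be the least index with $x\in G_i$. Then $x\notin G_{i-1}$, hence $y\notin G_{i-1}$ (otherwise $G_{i-1}$ would contain $y$ and therefore $x$); but $y\in C\sbs G_i$. Thus both $x$ and $y$ lie in the singleton $G_i\smu G_{i-1}$, forcing $x=y$. This is an anti-exchange argument in disguise, and I expect phrasing it cleanly to be the delicate part.

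With $\le$ shown to be a partial order, it remains to identify $\GGG$ with its ideals, and to settle uniqueness. One inclusion is easy: any $G\in\GGG$ is an ideal, since $y\in G$ and $x\le y$ give $\conv(\{y\})\sbs G$, whence $x\in G$. For the reverse inclusion, I would write an arbitrary ideal $D$ as $D=\bigcup_{y\in D}\conv(\{y\})$, which is valid because $\conv(\{y\})=\{x:x\le y\}$ is the down-set of $y$ and $D$ is downward closed; closure under union---with $\es\in\GGG$ covering $D=\es$---then gives $D\in\GGG$. Finally, uniqueness is forced: in any representation of $\GGG$ as the ideals of a partial order $\le'$, the principal ideal of $y$ is the smallest ideal, hence the smallest convex set, containing $y$, i.e.\ $\conv(\{y\})$; therefore $x\le' y \iff x\in\conv(\{y\}) \iff x\le y$, so $\le'$ coincides with the order defined above.
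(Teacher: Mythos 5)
Your proof is correct. Note that the paper itself offers no proof of this statement---it is quoted from Edelman and Jamison (1985)---so there is no internal argument to compare against; judged on its own, your write-up is complete. The forward direction and the identification of $\GGG$ with the ideals of $\le$ are handled exactly as one would expect, and the one genuinely delicate point, antisymmetry, is settled soundly: from $\conv(\{x\})=\conv(\{y\})$ you get that every convex set contains $x$ iff it contains $y$, and a maximal chain $\es=G_0\ssbs G_1\ssbs\cdots\ssbs G_n=X$ built by iterating \textsf{(G3)} (which exists since $X$ is finite and $X\in\GGG$) forces $x$ and $y$ into the same singleton difference $G_i\smu G_{i-1}$. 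Your observation that closure under union enters only in showing that every ideal is convex (via $D=\bigcup_{y\in D}\conv(\{y\})$), while the partial-order structure and the inclusion $\GGG\sbs\{\text{ideals}\}$ hold in any convex geometry, is accurate, and the uniqueness argument via principal ideals being the minimal convex sets $\conv(\{y\})$ is exactly right.
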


The partial order $\le$ in Theorem~\ref{THM_chrz ordinal cg} is said to be \textsl{associated} to the ordinal convex geometry $\GGG$.  
Notice that, for all $z,t \in X$, we have
\begin{equation}\label{EQ_order_conv}
z \le t \quad \iff \quad z \in \conv(\{t\}).
\end{equation}

\begin{example}\label{EX_affine_CG}\textsf{(Affine convex geometries)}
Let $X$ be a nonempty finite set of points in some real affine space $\R^d$.  
Moreover, let $\GGG$ be the collection of all sets obtained as intersections of $X$ with convex subsets of $\R^d$, that is,
\begin{equation*} 
  \GGG := \big\{X \cap C \;\st\; C \text{ is a convex subset of } \R^d \big\}.
\end{equation*}
Then $(X,\GGG)$ is a convex geometry, which we call \textsl{affinely embedded}.
The family $\GGG$ is the geometry \textsl{induced} on the subset $X$ of $\R^d$.
For any subset $A$ of $X$, we have $\conv_\GGG(A) = X \cap \conv_\R(A)$, where $\conv_\R$ denotes the convex hull in $\R^d$. 
A convex geometry is \textsl{affine} if it is isomorphic to some affinely embedded convex geometry.
\end{example} 

A special feature of affine convex geometries is that they are \textsl{atomistic}, which means that all their one-element sets are convex \citep{Edelman_Jamison1985}.
The problem of algorithmically characterizing affine convex geometries is nontrivial: on this topic,  see~\cite{Hoffmann_Merckx2018}.

Substructures of arbitrary convex geometries are defined with a procedure similar to the construction of affine convex geometries in Example~\ref{EX_affine_CG}:

\begin{definition} \label{DEF_subgeometries}
Let $(Z,\GGG)$ be a convex geometry, and $\es \neq X \sbs Z$.  
The convex geometry $\HHH$ \textsl{induced on $X$ by $\GGG$} consists of all intersections of $X$ with elements of $\GGG$.\footnote{The proof that $\HHH$ is a convex geometry is straightforward.} 
In this case, we also say that $(X,\HHH)$ is a \textsl{subgeometry} of $(Z,\GGG)$.
\end{definition}


\subsection{Extreme Elements} \label{SUBSECT_extreme pts}

The notion of an `extreme element' of a set is central in the theory of convex geometries \citep[see, for instance,][]{Edelman_Jamison1985}.

\begin{definition} \label{DEF_extreme_elements}
	Let $(X,\GGG)$ be a convex geometry, and $A \in 2^X$. 
	An element $a \in A$ is an \textsl{extreme element of} $A$ if $a \notin \conv(A \setminus \{a\})$. 
	We write $\ex_\GGG(A)$ for the set of extreme elements of $A$, or simply $\ex(A)$ when there is no risk of confusion. 
	The function $\ex_\GGG \colon 2^X \to 2^X$ is the \textsl{extreme operator} on $(X,\GGG)$.  
\end{definition}

Given a convex geometry $(X,\GGG)$, any nonempty set $A \sbs X$ always has at least one extreme element; in particular, $\ex_\GGG(\{x\}) = \{x\}$ for any $x \in X$. 
Observe also that the extreme operator $\ex_\GGG \colon 2^X \to 2^X$ is such that $\ex_\GGG(\es) = \es$ and $\es \neq \ex_\GGG(A) \subseteq A$ for all $A \in 2^X \setminus \{\es \}$; that is, according to Footnote~\ref{FOOT:choice_space} or to Definition~\ref{DEF_choice spaces} below, the pair $\left(X,\ex_\GGG\right)$ is a \textsl{choice space}.  
 
\begin{example} \label{EX_extreme_elements}
	We determine the extreme elements of some sets with respect to the convex geometries of Example~\ref{EX_all_CGs_on_3_elements_up_to_iso}. 
	For $A = \{x,z\}$ and $B = \{y,z\}$, we have:
	$$
	\ex_{\GGG_i}(A) = \left\{
	\begin{array}{lll}
		\{z\} & \hbox{ if~ } 1 \le i \le 4 \\
		A     & \hbox{ if~ } 5 \le i \le 6\,,
	\end{array}
	\right.
	\qquad 
	\hbox{and}
	\qquad 	
		\ex_{\GGG_i}(B) = \left\{
	\begin{array}{lll}
		\{z\} & \hbox{ if~ } 1 \le i \le 2 \\
		B     & \hbox{ if~ } 3 \le i \le 6\,.
	\end{array}
	\right.
	$$	
\end{example}

The next four lemmas collect several properties of the extreme operator, which will be used in later sections. 
Since the first three lemmas are well-known, we shall only prove the fourth.   

\begin{lemma} \label{LEMMA_chrz_extreme_elements}
Let $(X,\GGG)$ be a convex geometry.
The following properties are equivalent for any $A \in 2^X$ and $a \in A$: 
\begin{enumerate}[\quad\rm (1)]
	\item\label{extrelements0} $a \in \ex(A)$;
	\item\label{extrelementsb} $\conv(A \setminus \{a\}) \ssbs \conv(A)$;
	\item\label{extrelementsc} $(\exists G \in \GGG)$ $\big(A \setminus \{a\} \sbs G \; \wedge \; a \notin G\big)$.
\end{enumerate}
\end{lemma}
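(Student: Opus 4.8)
The plan is to prove the two equivalences (1)$\iff$(3) and (1)$\iff$(2) separately, both of which reduce to elementary properties of the convex-hull operator $\conv$—namely that it is \emph{extensive} ($A \sbs \conv(A)$), \emph{monotone} ($A \sbs A' \Rightarrow \conv(A) \sbs \conv(A')$), and \emph{idempotent} ($\conv(\conv(A)) = \conv(A)$). Each of these follows directly from Definition~\ref{DEF_convex_hull}, which exhibits $\conv(A)$ as the smallest convex set containing $A$: extensivity and idempotence are immediate, and monotonicity holds because $\conv(A')$ is a convex superset of $A$ whenever $A \sbs A'$, so it contains $\conv(A)$. Throughout I would abbreviate $B := A \smu \{a\}$, so that $\conv(A) = \conv(B \cup \{a\})$ and the statement $a \in \ex(A)$ unfolds, by Definition~\ref{DEF_extreme_elements}, to $a \notin \conv(B)$.

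For (1)$\iff$(3), I would simply unwind the definition of the convex hull as an intersection. By Definition~\ref{DEF_convex_hull}, $\conv(B) = \bigcap\{G \in \GGG : B \sbs G\}$, so $a \notin \conv(B)$ holds precisely when $a$ escapes at least one member of this family—that is, when there is some $G \in \GGG$ with $B \sbs G$ and $a \notin G$. This is verbatim condition~(3), so (1) and (3) are two readings of the same statement.

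For (1)$\iff$(2), I would argue as follows. Monotonicity gives $\conv(B) \sbs \conv(A)$ for free, so the content of (2) is just the strictness of this inclusion. If (1) holds, i.e.\ $a \notin \conv(B)$, then the inclusion must be strict, since $a \in A \sbs \conv(A)$ witnesses an element of $\conv(A)$ lying outside $\conv(B)$. Conversely, if $a \in \conv(B)$ (the negation of (1)), then $A = B \cup \{a\} \sbs \conv(B)$, whence monotonicity and idempotence force $\conv(A) \sbs \conv(\conv(B)) = \conv(B)$; combined with the reverse inclusion this yields $\conv(B) = \conv(A)$, the negation of (2). Taking contrapositives closes the equivalence.

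There is no genuine obstacle here: the result is essentially a bookkeeping exercise, and the only point requiring (minor) care is to make the three closure properties of $\conv$ explicit before invoking them, since the lemma is merely asserted to be well-known while these properties are exactly what the argument rests on.
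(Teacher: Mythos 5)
Your proof is correct: the equivalence (1)$\iff$(3) is indeed just the definition of $\conv$ as an intersection unwound, and the equivalence (1)$\iff$(2) follows cleanly from extensivity, monotonicity, and idempotence of $\conv$, all of which you correctly justify from Definition~\ref{DEF_convex_hull}. The paper gives no proof of this lemma (it is cited as well-known), and your argument is the standard one it implicitly relies on, so there is nothing to contrast.
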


Theorem~2 in~\cite{Monjardet_Raderanirina2001} yields

\begin{lemma} \label{LEMMA_Monjardet_Rad}
Let $(X,\GGG)$ be a convex geometry.
For any $A \in 2^X$, we have: 
\begin{enumerate}[\quad \rm(i)]
  \item \label{item_MR_1} $\ex(\conv(A)) = \ex(A)$;
  \item \label{item_MR_2} $\conv(\ex(A)) = \conv(A)$.
\end{enumerate}
\end{lemma}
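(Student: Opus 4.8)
The plan is to establish part~(\ref{item_MR_1}) first and then bootstrap part~(\ref{item_MR_2}) out of it, since an induction on $\vert A\vert$ for the second statement will rely precisely on the first. Throughout I write $B=\conv(A)$. Two of the four required inclusions are immediate. For $\ex(B)\sbs\ex(A)$, I would first note that any $a\in\ex(B)$ must lie in $A$: otherwise $A\sbs B\smu\{a\}$ would give $a\in B\sbs\conv(B\smu\{a\})$, contradicting $a\in\ex(B)$; then $\conv(A\smu\{a\})\sbs\conv(B\smu\{a\})$ together with $a\notin\conv(B\smu\{a\})$ forces $a\in\ex(A)$. For part~(\ref{item_MR_2}), the inclusion $\conv(\ex(A))\sbs\conv(A)$ is just monotonicity of $\conv$, as $\ex(A)\sbs A$.

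The heart of the argument, and the step I expect to be the main obstacle, is the reverse inclusion $\ex(A)\sbs\ex(B)$: one has a convex set separating $a$ from $A\smu\{a\}$ and must promote it to one separating $a$ from the \emph{larger} set $B=\conv(A)$, which is where upgradability enters. Fix $a\in\ex(A)$. By condition~(\ref{extrelementsc}) of Lemma~\ref{LEMMA_chrz_extreme_elements}, the family $\MMM$ of convex sets $G$ with $A\smu\{a\}\sbs G$ and $a\notin G$ is nonempty; since $X$ is finite, I pick $G\in\MMM$ maximal for inclusion. As $a\in X\smu G$ we have $G\neq X$, so Axiom~\textsf{(G3)} supplies $x\in X\smu G$ with $G\cup\{x\}\in\GGG$. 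If $x\neq a$, then $G\cup\{x\}$ still lies in $\MMM$ yet strictly contains $G$, contradicting maximality; hence $x=a$ and $G\cup\{a\}\in\GGG$. This convex set contains $A$, so $\conv(A)\sbs G\cup\{a\}$, that is $B\smu\{a\}\sbs G$ with $a\notin G$, and condition~(\ref{extrelementsc}) of Lemma~\ref{LEMMA_chrz_extreme_elements} then delivers $a\in\ex(B)$. This completes part~(\ref{item_MR_1}).

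It remains to obtain $\conv(A)\sbs\conv(\ex(A))$ for part~(\ref{item_MR_2}), and part~(\ref{item_MR_1}) is exactly what makes an induction on $\vert A\vert$ run. If every element of $A$ is extreme there is nothing to prove. Otherwise I choose $a\in A\smu\ex(A)$, so that $a\in\conv(A\smu\{a\})$ and therefore $\conv(A\smu\{a\})=\conv(A)$. Writing $A'=A\smu\{a\}$, part~(\ref{item_MR_1}) applied to both $A$ and $A'$ gives $\ex(A')=\ex(\conv(A'))=\ex(\conv(A))=\ex(A)$, so passing from $A$ to $A'$ leaves $\conv$ and $\ex$ unchanged while lowering cardinality. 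Iterating, I reach a set all of whose elements are extreme, which is necessarily equal to $\ex(A)$ and has convex hull $\conv(A)$; hence $\conv(\ex(A))=\conv(A)$, as desired. The only delicate point in this last paragraph is the invariance of $\ex$ under deletion of a non-extreme point, and this is supplied for free once part~(\ref{item_MR_1}) is in hand.
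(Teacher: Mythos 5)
Your proof is correct. Note that the paper itself gives no argument for this lemma: it is one of the results declared ``well-known'' and attributed to Theorem~2 of \citet{Monjardet_Raderanirina2001}, so there is no in-paper proof to match. What you supply is a self-contained derivation from Axioms~\textsf{(G1)--(G3)} and Lemma~\ref{LEMMA_chrz_extreme_elements}, and every step checks out. The genuinely nontrivial inclusion $\ex(A)\sbs\ex(\conv(A))$ is handled by exactly the right device: taking a maximal convex $G$ separating $a$ from $A\setminus\{a\}$ and using upgradability to conclude that the only possible augmentation is by $a$ itself, whence $G\cup\{a\}$ is a convex superset of $A$ and $G$ already separates $a$ from $\conv(A)\setminus\{a\}$. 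The reverse inclusion correctly begins by observing $\ex(\conv(A))\sbs A$, and the iteration for part~(\ref{item_MR_2}) is sound: deleting a non-extreme point preserves $\conv$ (since the point lies in the hull of the rest) and preserves $\ex$ (by two applications of part~(\ref{item_MR_1}), both sets having the same hull), so the process terminates at a set equal to $\ex(A)$ with hull $\conv(A)$. One could shorten the finish slightly --- once part~(\ref{item_MR_1}) is known, $\ex$ is constant along the deletion chain automatically --- but your bookkeeping is exactly what makes the induction legitimate, and the argument is a perfectly good substitute for the external citation.
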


The following properties are given in \citet{Edelman_Jamison1985}: 

\begin{lemma} \label{LEMMA_extreme_for_convex}
Let $(X,\GGG)$ be a convex geometry. 
For any $G \in \GGG$ and $E \in 2^X$, we have: 
\begin{enumerate}[\quad \rm(i)]
	\item $\ex(G) = \{g\in G \,\st\, G\setminus\{g\}\in\GGG\}$; 
	\item $G = \conv(\ex(G))$;
    \item $E \sbs \ex(G) \; \implies \; G \setminus E \in \GGG$.
\end{enumerate}
\end{lemma}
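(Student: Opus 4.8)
The plan is to prove the three items in order, leaning on the characterization of extreme elements in Lemma~\ref{LEMMA_chrz_extreme_elements} and on the closure of $\GGG$ under intersection, Axiom~\textsf{(G2)}. Throughout I will use the monotonicity of the convex hull, namely $A \sbs B \implies \conv(A) \sbs \conv(B)$, which is immediate from Definition~\ref{DEF_convex_hull}: enlarging $A$ can only shrink the family of convex supersets being intersected.

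For item~(i) I would argue by double inclusion. If $g \in \ex(G)$, then by Definition~\ref{DEF_extreme_elements} we have $g \notin \conv(G \smu \{g\})$, so $H := \conv(G \smu \{g\})$ is a convex set containing $G \smu \{g\}$ but omitting $g$. Since $G \cap H \in \GGG$ by~\textsf{(G2)}, and since $G \cap H$ contains $G \smu \{g\}$ yet excludes $g$, it coincides with $G \smu \{g\}$; hence $G \smu \{g\} \in \GGG$. Conversely, if $g \in G$ and $G \smu \{g\} \in \GGG$, then $\conv(G \smu \{g\}) = G \smu \{g\}$ does not contain $g$, so $g \in \ex(G)$ directly from the definition. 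Item~(ii) is then almost immediate: Lemma~\ref{LEMMA_Monjardet_Rad}(ii) gives $\conv(\ex(G)) = \conv(G)$, and $\conv(G) = G$ because $G$ is convex.

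The substantive part is item~(iii), and I would handle it by exhibiting $G \smu E$ explicitly as an intersection of convex sets. For each $e \in E \sbs \ex(G)$, extremality gives $e \notin \conv(G \smu \{e\})$, so set $H_e := \conv(G \smu \{e\}) \in \GGG$; this set contains every point of $G$ other than $e$, and excludes $e$. I would then claim $G \smu E = G \cap \bigcap_{e \in E} H_e$: a point $x \in G \smu E$ lies in each $H_e$ because $x \neq e$ forces $x \in G \smu \{e\} \sbs H_e$, while any $e \in E$ is excluded since $e \notin H_e$. Because $X$ is finite, $E$ is finite, so iterating Axiom~\textsf{(G2)} shows this finite intersection is convex, whence $G \smu E \in \GGG$ (the case $E = \es$ being trivial). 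An equally valid alternative would be induction on $\vert E \vert$, peeling off one extreme element via item~(i) after checking that the remaining elements of $E$ stay extreme in $G \smu \{e\}$, again by monotonicity of the hull.

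The only real obstacle is item~(iii): one must see that deleting a set of extreme points keeps the result convex, and the trick is to represent the deletion as a single intersection of the hulls $\conv(G \smu \{e\})$ rather than trying to reason about $\conv(G \smu E)$ all at once. Items~(i) and~(ii) are routine once the characterization of Lemma~\ref{LEMMA_chrz_extreme_elements} and the identity $\conv(G) = G$ for convex $G$ are in hand.
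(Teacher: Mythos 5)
Your proof is correct. Note that the paper itself does not prove this lemma at all---it is quoted from Edelman and Jamison (1985) with the remark that ``the first three lemmas are well-known, we shall only prove the fourth''---so there is no in-paper argument to compare against; your proposal simply supplies the missing proof. All three items check out: in (i) the set $G \cap \conv(G\smu\{g\})$ is indeed forced to equal $G\smu\{g\}$ and lies in $\GGG$ by \textsf{(G2)}; in (ii) the appeal to Lemma~\ref{LEMMA_Monjardet_Rad}(ii) together with $\conv(G)=G$ is legitimate, since that lemma is stated independently earlier and does not rely on the present one; and in (iii) the identity $G \smu E = G \cap \bigcap_{e\in E}\conv(G\smu\{e\})$ is verified correctly in both directions, with finiteness of $X$ guaranteeing that the intersection stays in $\GGG$. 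The alternative inductive route you sketch for (iii) also works, since monotonicity of the hull (equivalently Lemma~\ref{LEMMA_alpha_and_rho_for_CG}(i)) keeps the remaining elements of $E$ extreme after each deletion. The one point worth making explicit is that $\conv(A)\in\GGG$ for every $A$, which follows from \textsf{(G2)} and the finiteness of the family $\{H\in\GGG : A\sbs H\}$; you use this tacitly when forming $H_e$, but it is the same convention the paper adopts after Definition~\ref{DEF_convex_hull}.
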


Finally, we prove 

\begin{lemma} \label{LEMMA_alpha_and_rho_for_CG}
Let $(X,\GGG)$ be a convex geometry.  
For any $A,B \in 2^X$, we have:
\begin{enumerate}[\quad \rm(i)]
	\item $A \sbs B  \;\implies\; A \cap \ex(B) \sbs \ex(A)$; 
	\item $\ex(A) \cap B \neq \es \;\implies\; \ex(A \cup B) \cap B \neq \es$.
\end{enumerate}
\end{lemma}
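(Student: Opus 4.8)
The plan is to treat the two parts separately: part~(i) will follow directly from the separation characterization of extreme elements in Lemma~\ref{LEMMA_chrz_extreme_elements}, while part~(ii) is most cleanly obtained by contraposition, feeding off the two identities in Lemma~\ref{LEMMA_Monjardet_Rad}.

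For part~(i), I would fix an element $a \in A \cap \ex(B)$ and show $a \in \ex(A)$ directly. Since $a \in \ex(B)$, the characterization Lemma~\ref{LEMMA_chrz_extreme_elements} (condition~(3)) supplies a convex set $G \in \GGG$ with $B \setminus \{a\} \sbs G$ and $a \notin G$. As $A \sbs B$, this same $G$ satisfies $A \setminus \{a\} \sbs B \setminus \{a\} \sbs G$ while still excluding $a$. Applying Lemma~\ref{LEMMA_chrz_extreme_elements} in the direction (3)$\Rightarrow$(1), now to the set $A$, yields $a \in \ex(A)$. This part is immediate once the separating convex set is produced.

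For part~(ii), I would argue the contrapositive. Assume $\ex(A \cup B) \cap B = \es$, so that every extreme element of $A \cup B$ lies in $(A \cup B) \setminus B = A \setminus B \sbs A$; in particular $\ex(A \cup B) \sbs A$. Taking convex hulls, which are monotone, and invoking Lemma~\ref{LEMMA_Monjardet_Rad}(ii), I obtain $\conv(A \cup B) = \conv(\ex(A \cup B)) \sbs \conv(A)$, while the reverse inclusion $\conv(A) \sbs \conv(A \cup B)$ is trivial from $A \sbs A \cup B$; hence $\conv(A) = \conv(A \cup B)$. Now Lemma~\ref{LEMMA_Monjardet_Rad}(i) turns this equality of hulls into an equality of extreme sets, $\ex(A) = \ex(\conv(A)) = \ex(\conv(A \cup B)) = \ex(A \cup B)$, so that $\ex(A) \cap B = \ex(A \cup B) \cap B = \es$. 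This is exactly the negation of the hypothesis of~(ii), completing the contrapositive.

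The routine inclusions and monotonicity are the easy part; the single step that requires the right idea is noticing, in~(ii), that confining all extreme elements of $A \cup B$ to $A$ forces the two convex hulls to coincide, after which the pairing $\conv(\ex(\cdot)) = \conv(\cdot)$ with $\ex(\conv(\cdot)) = \ex(\cdot)$ does all the work. I expect no genuine obstacle beyond selecting the contrapositive formulation for part~(ii).
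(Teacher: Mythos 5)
Your proof is correct. Part~(i) is word-for-word the paper's argument: produce the separating convex set $G$ for $a$ in $B$ via Lemma~\ref{LEMMA_chrz_extreme_elements}(3) and observe it also separates $a$ from $A\setminus\{a\}$. For part~(ii) you and the paper share the opening move --- from $\ex(A\cup B)\cap B=\es$ deduce $\ex(A\cup B)\sbs A$ and hence $\conv(A\cup B)\sbs\conv(A)$ via Lemma~\ref{LEMMA_Monjardet_Rad}(ii) --- but the endgames differ. The paper keeps a witness $p\in\ex(A)\cap B$, notes $p\in\ex(\conv(A))$ by Lemma~\ref{LEMMA_Monjardet_Rad}(i), invokes Lemma~\ref{LEMMA_extreme_for_convex}(i) to see that $\conv(A)\setminus\{p\}$ is convex, and uses it as a separating set to force $p\in\ex(A\cup B)$, a contradiction. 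You instead upgrade the inclusion to the equality $\conv(A)=\conv(A\cup B)$ and apply $\ex(\conv(\cdot))=\ex(\cdot)$ to both sides, concluding $\ex(A)=\ex(A\cup B)$ outright, which gives the contrapositive immediately. Your route is leaner --- it needs only the two identities of Lemma~\ref{LEMMA_Monjardet_Rad} and no separating-set construction --- and it actually establishes the stronger fact that $\ex(A\cup B)\cap B=\es$ forces $\ex(A)$ and $\ex(A\cup B)$ to coincide entirely; the paper's version, by working with an explicit witness, stays closer to the separation characterization that drives the rest of the section. Both are complete proofs.
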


\begin{proof}
To prove (i), suppose $A \subseteq B$, and let $a\in A \cap \ex(B)$.  
By Lemma~\ref{LEMMA_chrz_extreme_elements}, there exists $G$ in $\GGG$ such that $B\setminus\{a\}\sbs G$ and $a\notin G$.  
Then $A \setminus \{a\} \sbs G$ and $a\notin G$, hence $a \in \ex(A)$ again by Lemma~\ref{LEMMA_chrz_extreme_elements}.

To prove (ii), let $p\in \ex(A) \cap B$.
Toward a contradiction,  suppose $\ex(A \cup B) \cap B$ is empty. 
Then, $\ex(A \cup B) \sbs A \cup B$ yields $\ex(A \cup B) \sbs A$, which in turn implies $\conv(A \cup B) \sbs \conv(A)$ by Lemma~\ref{LEMMA_Monjardet_Rad}(ii).  
It follows that $B \sbs \conv(A)$.  
On the other hand,  Lemma~\ref{LEMMA_Monjardet_Rad}(i) gives $p\in \ex(\conv(A))$.  
By Lemma~\ref{LEMMA_extreme_for_convex}(i), it follows that $\conv(A)\setminus\{p\}$ is a convex set.  
Since the latter set includes $(A \cup B) \setminus\{p\}$ but does not contain $p$, Lemma~\ref{LEMMA_chrz_extreme_elements} entails $p \in \ex(A \cup B)$. 
We conclude that $p \in \ex(A \cup B) \cap B = \es$, a contradiction.
\end{proof}

Lemma~\ref{LEMMA_alpha_and_rho_for_CG}(i) has a simple rephrasing that we will often use: \textsl{If an element of a set $A$ is not extreme in $A$, then it cannot be extreme in any superset of $A$.}

\begin{remark}
Lemma~\ref{LEMMA_alpha_and_rho_for_CG}(ii) cannot be strengthened (as its proof might suggest) by requiring that the inclusion $\ex(A) \cap B \subseteq \ex(A \cup B) \cap B$ holds.
Consider, for instance, the convex geometry $\GGG_5$ on $X = \{x,y,z\}$ defined in Example~\ref{EX_all_CGs_on_3_elements_up_to_iso}.
Then, for $A :=\{x,z\}$ and $B := X$, we have  
$$
\ex_{\GGG_5}(A) \cap B = \ex_{\GGG_5}(A) = \{x,z\} \nsubseteq \{y,z\} = \ex_{\GGG_5}(B) = 
\ex_{\GGG_5}(A \cup B) \cap B\,. 
$$	
\end{remark}

The next definition, due to \citet{Plott1973}, introduces `path independent choice spaces', which are important structures in mathematical economics.  

\begin{definition} \rm \label{DEF_choice spaces}
	A function $c \colon 2^X \to 2^X$ such that, for any $A \in 2^X$,
\begin{enumerate}[(i)]
	\item $c(A) \sbs A$, and
	\item $A\neq \es \; \implies \; c(A)\neq \es$ 
\end{enumerate}
is called a \textsl{choice correspondence}; in this case, the pair $(X,c)$ is a \textsl{choice space}.
If, in addition, $c$ satisfies the property of \textsl{path independence}, i.e.,  
\begin{enumerate} 
	\item[\rm \quad(iii)] $c(A \cup B) \;=\; c(c(A) \cup c(B))$
\end{enumerate}
for any $A,B \in 2^X$, then $(X,c)$ is a \textsl{path independent} choice space. 
\end{definition}
 
Path independent choice spaces come up in the analysis of convex geometries because of the following striking result:

\begin{theorem}[\citealp{Koshevoy1999}] \label{THM_intriguing}
If $(X,\GGG)$ is a convex geometry, then $(X,\ex)$ is a path independent choice space; in particular, for all $A,B \in 2^X$, we have 
\begin{equation*} 
	\ex(A \cup B) \;=\; \ex(\ex(A) \cup \ex(B)).
\end{equation*}
Conversely, if $(X,c)$ is a path independent choice space, then there is a unique convex geometry $\GGG_c$ on $X$ whose extreme operator  coincides with $c$, namely  
$$
\GGG_c = \left\{G \in 2^X \:\st\: \left(\forall A \in 2^X\right) \: \big(c(A) = c(G) \:\Longrightarrow\: A \sbs G\big) \right\}.
$$  
\end{theorem}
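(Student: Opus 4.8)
The plan is to prove the two implications separately, dispatching the forward direction quickly with the lemmas already in hand and concentrating the real effort on the converse.

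\textbf{Forward direction.} Since the paragraph preceding Definition~\ref{DEF_choice spaces} already records that $(X,\ex)$ is a choice space, only path independence remains. The key observation is that $\conv_\GGG$ is a closure operator (extensive, monotone, idempotent), so that $\conv(\conv(A)\cup\conv(B))=\conv(A\cup B)$. Feeding in $\conv(\ex(A))=\conv(A)$ from Lemma~\ref{LEMMA_Monjardet_Rad}(ii) gives $\conv(\ex(A)\cup\ex(B))=\conv(A\cup B)$, and then applying $\ex(\conv(S))=\ex(S)$ from Lemma~\ref{LEMMA_Monjardet_Rad}(i) to both $S=\ex(A)\cup\ex(B)$ and $S=A\cup B$ yields $\ex(\ex(A)\cup\ex(B))=\ex(A\cup B)$.

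\textbf{Converse, algebraic preliminaries.} Starting from a path independent $c$, I would first extract three consequences of the identity $c(A\cup B)=c(c(A)\cup c(B))$: (a) idempotence $c(c(A))=c(A)$, taking $B=A$; (b) subadditivity $c(A\cup B)\sbs c(A)\cup c(B)$, whence the Chernoff condition $A\sbs B\Rightarrow c(B)\cap A\sbs c(A)$; and (c) the \emph{outcast} property $c(A)\sbs B\sbs A\Rightarrow c(B)=c(A)$, which holds because both $c(A)$ and $c(B)$ then equal $c(c(A)\cup c(B))$. Property (c) is the workhorse for everything that follows.

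\textbf{Converse, building the geometry.} Path independence makes the family $\{B:c(B)=c(A)\}$ closed under union (via (a)), so over finite $X$ it has a largest member $\kappa(A)$, characterized by $x\in\kappa(A)\iff c(A\cup\{x\})=c(A)$, where the nontrivial direction uses (c). One checks that $\kappa$ is a closure operator whose fixed points are exactly the sets in $\GGG_c$; that $\es,X\in\GGG_c$ and that $\GGG_c$ is closed under intersection is then routine, giving \textsf{(G1)} and \textsf{(G2)}. The crux is \textsf{(G3)}, and I would route it through the \emph{anti-exchange} property of $\kappa$: if $G$ is closed, $x,y\notin G$ are distinct, and $x\in\kappa(G\cup\{y\})$ and $y\in\kappa(G\cup\{x\})$, then $c(G\cup\{x\})=c(G\cup\{y\})=c(G\cup\{x,y\})$; this common value lies in $(G\cup\{x\})\cap(G\cup\{y\})=G$, so (c) forces it to equal $c(G)$, giving $x\in\kappa(G)=G$, a contradiction. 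Upgradability \textsf{(G3)} then follows by the standard argument that a closed set minimal among those properly containing $G$ must exceed $G$ by a single point, since two extra points would violate anti-exchange. I expect this passage---anti-exchange and its promotion to \textsf{(G3)}---to be the main obstacle.

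\textbf{Converse, identifying the operator and uniqueness.} Because $\conv_{\GGG_c}=\kappa$, we obtain $\ex_{\GGG_c}(A)=\{a\in A:a\notin\kappa(A\smu\{a\})\}=\{a\in A:c(A)\neq c(A\smu\{a\})\}$, and a short argument splitting on whether $a\in c(A)$ (using (c) in one case and $c(A\smu\{a\})\not\ni a$ in the other) identifies this set as exactly $c(A)$; hence $\ex_{\GGG_c}=c$. For uniqueness I would show that \emph{every} convex geometry $\GGG$ satisfies $\GGG=\GGG_{\ex_\GGG}$: if $\ex(A)=\ex(G)$ with $G\in\GGG$ then $\conv(A)=\conv(G)=G$ by Lemma~\ref{LEMMA_Monjardet_Rad}(ii), so $A\sbs G$; conversely, for $G\in\GGG_{\ex_\GGG}$, taking $A=\conv(G)$ and using $\ex(\conv(G))=\ex(G)$ forces $\conv(G)\sbs G$, i.e. $G\in\GGG$. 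Applied to any convex geometry with extreme operator $c$, this shows it must coincide with $\GGG_c$.
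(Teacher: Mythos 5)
The paper offers no proof of this theorem: it is quoted from \cite{Koshevoy1999} and used as a black box, so there is no internal argument to compare yours against. Judged on its own terms, your proof is correct and complete in all essentials. The forward direction is a clean derivation from Lemma~\ref{LEMMA_Monjardet_Rad} together with the closure-operator identity $\conv(\conv(A)\cup\conv(B))=\conv(A\cup B)$. In the converse, the three consequences (a)--(c) of path independence are correctly derived (for the outcast property, $c(A)=c(c(B)\cup c(A))$ from $A=B\cup A$ and $c(B)=c(c(B)\cup c(A))$ from $B=B\cup c(A)$, as you indicate); the operator $\kappa$ is a genuine closure operator whose fixed points are exactly $\GGG_c$; the anti-exchange computation is right, and its promotion to \textsf{(G3)} via a minimal closed set properly containing $G$ is the standard argument and goes through (both $\kappa(G\cup\{x\})$ and $\kappa(G\cup\{y\})$ must equal the minimal $H$, contradicting anti-exchange if $|H\setminus G|\geq 2$). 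The identification $\ex_{\GGG_c}=c$ and the uniqueness argument via $\GGG=\GGG_{\ex_\GGG}$ for every convex geometry $\GGG$ are both correct. The only places where you defer detail (monotonicity and idempotence of $\kappa$, and the last step of \textsf{(G3)}) are routine and completable exactly as sketched.
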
 

Theorem~\ref{THM_intriguing} allows one to establish properties of convex geometries by translating properties of path independent choice spaces.  
For instance, the two implications in Lemma~\ref{LEMMA_alpha_and_rho_for_CG} can also be derived from path independence: see, e.g.,~\citet{Moulin1985} for (i), and~\citet{Cantone_Giarlotta_Watson2020+} for some rephrasing of (ii).
Several additional properties of the extreme operator of a convex geometry $(X,\GGG)$ can also be derived as analogous features of choice correspondences.
 For instance, the following property (\textsl{Aizerman Property}) holds for  any $A,B \in 2^X$:  
\begin{equation*} \label{EQ_Aizerman}
	\ex(B) \subseteq A \subseteq B \; \implies \; \ex(A) \subseteq \ex(B)\,.
\end{equation*}
In fact, \cite{Aizerman_Malishevski1981} show that the join of the property in Lemma~\ref{LEMMA_alpha_and_rho_for_CG}(i)\footnote{This property is originally due to~\cite{Chernoff1954}.} and Aizerman Property is equivalent to path independence. 

We close this section with two remarks about the main examples of convex geometries that will be examined in this paper.

\begin{remark} \label{REM_extreme_operator_for_ordinal_CG_is_maximization}
 Let $(X,\GGG)$ be an ordinal convex geometry having $\le$ as associated partial order.
It is immediate to check that the extreme operator coincides with the operator of maximization w.r.t.\ $\le$, that is, for all nonempty $A \sbs X$,  
$$
\ex(A) = \max(A,\leq) := \left\{a \in A \:\st\: (\forall a' \in A) \; a \le a' \Longrightarrow a=a'\right\}.
$$
In other words, the extreme elements of a set in an ordinal convex geometry are exactly the non-dominated ones.  
 The choice space $(X,\ex)$ that arises in view of Theorem~\ref{THM_intriguing} is said to be `rationalizable' by the partial order $\le$.\footnote{\label{FOOTNOTE:rationalizability} The rationalizability of a choice space is a well-known notion within the classical \textsl{theory of revealed preferences}, pioneered by the economist~\cite{Samuelson1938}. 
 Formally, a choice space $(X,c)$ is \textsl{rationalizable} if there is an acyclic binary relation $\precsim$ (not necessarily a partial order) on $X$ such that, for any $A \in 2^X$, the equality $c(A) = \max(A,\precsim)=\{a \in A \st (\nexists a' \in A) \, a \prec a'\}$ holds.  We refer the reader to the collection of papers by \cite{Suzumura2016} for a vast account of the topic of rationalizable choices.  For some related notions of \textsl{bounded rationality}, which use binary relations or similar tools to explain choice behavior in different ways, see the recent survey  by~\cite{Giarlotta2019} and references therein.}   
\end{remark} 

\begin{remark}
For a finite subset $X$ of a real affine space $\R^d$ as in Example~\ref{EX_affine_CG}, let  $\GGG$ be the convex geometry induced on $X$.
Then the extreme elements of a subset $A$ of $X$ are the vertices of the polytope 
$\conv_\R(A)$.
In other words, $\ex(A)$ is composed of all the points $a$ of $A$ with the property that some linear functional on $\R^d$ is maximized on $A$ at $a$ but at no other point of $A$.
\end{remark}


\section{Resolutions of Convex Geometries} \label{SECT:resolutions cg}

Here we introduce the main notion of this paper, namely resolutions of convex geometries.
This is an application of a general notion of resolution, which captures the concept of expanding a mathematical structure by substituting each of its elements by structures of the same type.  

Historically, resolutions were originally defined in a topological setting by \cite{Fedorchuk1968}, and then extensively studied by \cite{Watson1992}.
Resolutions have proven to be extremely useful in set-theoretic topology, providing a unified point of view for many seemingly different topological spaces.

Very recently, the notion of resolution has been adapted to the field of choice theory by \cite{Cantone_Giarlotta_Watson2020+}.  
Upon restricting this notion to path independent choice spaces, and in view of the structural bijection provided by Theorem~\ref{THM_intriguing}, resolutions of convex geometries naturally arise.


\subsection{Definition and Examples} \label{SUBSECT_preliminary for resolutions}

\begin{definition}\label{DEF_resolution_of_cg} 
Let $(X,\GGG_X)$ be a convex geometry, the \textsl{base geometry}, and $\{(Y_x,\GGG_x) \st x\in X\}$ a family of convex geometries, the \textsl{fiber geometries}, where the sets $Y_x$'s are pairwise disjoint and also disjoint from $X$.  
Let 
$$
Z := \bigcup_{x \in X} Y_x\,,
$$
and define the \textsl{projection} by
$$
\pi \colon Z \to X\,,\quad z \mapsto x \hbox{~ for all } x \in X \hbox{ and } z \in Y_x. 
$$    
The \textsl{resolution of $(X,\GGG_X)$ into $\{(Y_x,\GGG_x) \:\st \: x\in X\}$} is the pair $(Z,\GGG_Z)$, where $\GGG_Z$ consists of all subsets $A$ of $Z$ satisfying the following three requirements:
\begin{enumerate}[\sf (R1)]
\item \textsl{(base coherence)\;} $\pi(A) \in \GGG_X\,$;\label{(R1)}

\item \textsl{(fiber coherence)\;} $A \cap Y_x \in \GGG_x$ for all $x \in \pi(A)\,$;
\label{(R2)}  
\item \textsl{(non-extreme indiscernibility)\;} $Y_x \sbs A$ for all $x \in \pi(A) \setminus \ex_{\GGG_X}(\pi(A))\,$.
\label{(R3)}
\end{enumerate} 
We shall use the suggestive notation
\begin{equation*} 
	(Z,\GGG_Z) = (X,\GGG_X) \boxleft (Y_x,\GGG_x)_{x \in X}\,,
\end{equation*} 
and generically call $(Z,\GGG_Z)$ \textsl{a resolution of convex geometries}. 
A resolution of convex geometries is \textsl{nontrivial} if both the base and at least one fiber have more than one element.  (See Figure~\ref{FIG:definition_of_resolutions}, where each element $x$ of the base $X=\{1,2,3,4,5\}$ is \textsl{resolved} into a fiber $Y_x$, to obtain a nontrivial resolution.) 

\smallskip
Similarly, we define the \textsl{composition}  
\begin{equation*} 
	(Z,\CCC_Z) = (X,\GGG_X) \boxminus (Y_x,\GGG_x)_{x \in X}
\end{equation*}
\textsl{of the base} $(X,\GGG_X)$ \textsl{into the fibers} $(Y_x,\GGG_x)$, $x\in X$, by letting $\CCC_Z$ be the family of all subsets of $Z$ that satisfy \textsf{(R1)} and \textsf{(R2)} (but not necessarily \textsf{(R3)}).
The nontriviality of a composition of convex geometries is defined similarly to resolutions.  
\end{definition}
 

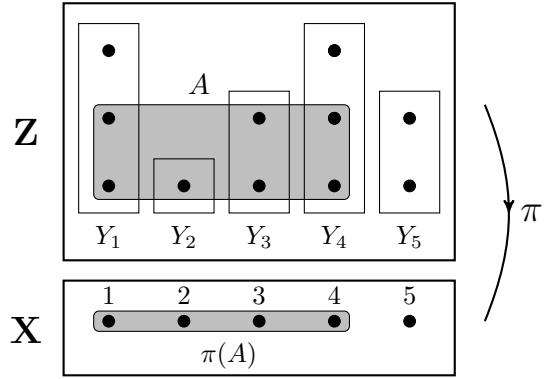
\begin{figure}[ht]
\begin{center}
\begin{tikzpicture}[yscale=0.9]
\tikzstyle{vtx}=[circle,draw,fill=black, scale=0.4]

\draw[draw=black,fill=lightgray,rounded corners=2pt] (-0.2,-1.2) rectangle (3.2,0.2);
\node at (1.2,0.5) {\small $A$}; 

\draw[draw=black,fill=lightgray,rounded corners=2pt] (-0.2,-3.15) rectangle (3.2,-2.85); 
\node at (1.6,-3.5) {\footnotesize $\pi(A)$};

\node[vtx,label=above:\footnotesize$1$] (x) at (0,-3){};
\node[vtx,label=above:\footnotesize$2$] (y) at (1,-3){}; 
\node[vtx,label=above:\footnotesize$3$] (z) at (2,-3){};
\node[vtx,label=above:\footnotesize$4$] (t) at (3,-3){};
\node[vtx,label=above:\footnotesize$5$] (v) at (4,-3){};
\draw[thick] (-0.6,-3.8) rectangle (4.6,-2.4);
\node at (-1.1,-3.13) {\large $\mathbf{X}$};

\node[vtx] (x1) at (0,1){};
\node[vtx] (x2) at (0,0){};
\node[vtx] (x3) at (0,-1){};
\draw (-0.4,-1.4) rectangle (0.4,1.4);
\node at (0,-1.75) {\footnotesize $Y_{1}$};
\node at (1,-1.75) {\footnotesize $Y_{2}$};
\node at (2,-1.75) {\footnotesize $Y_{3}$};
\node at (3,-1.75) {\footnotesize $Y_{4}$};
\node at (4,-1.75) {\footnotesize $Y_{5}$};

\node[vtx] (y1) at (1,-1){};
\draw (0.6,-1.4) rectangle (1.4,-0.6);

\node[vtx] (z1) at (2,-1){};
\node[vtx] (z2) at (2,0){};
\draw (1.6,-1.4) rectangle (2.4,0.4);

\node[vtx] (u1) at (3,1){};
\node[vtx] (u1) at (3,1){};
\node[vtx] (u2) at (3,0){};
\node[vtx] (u2) at (3,0){};
\node[vtx] (u3) at (3,-1){};
\draw (2.6,-1.4)  rectangle (3.4,1.4);

\node[vtx] (v1) at (4,-1){};
\node[vtx] (v2) at (4,0){};
\draw (3.6,-1.4)  rectangle (4.4,0.4); 

\draw[thick] (-0.6,-2.1) rectangle (4.6,1.7);
\node at (-1.1,-0.2) {\large $\mathbf{Z}$};
 
\draw[thick,->-=.5,bend left=20] (5,0.2) to node[right]{\large $\pi$} 
(5,-3);
\end{tikzpicture}

\kern-6mm
\end{center}
\caption{A nontrivial resolution $(Z,\GGG_Z) = (X,\GGG_X) \boxleft (Y_x,\GGG_x)_{x \in X}$, having $X = \{1, \ldots, 5\}$ as base set; each fiber $Y_x$ is above the corresponding element $x \in X$.  The set $A \sbs Z$ and its projection $\pi(A) = \{1,2,3,4\}$ are emphasized in grey. \label{FIG:definition_of_resolutions}}
\end{figure}

Compositions of convex geometries are special cases of `compound of hypergraphs' \citep{Chein_Habib_Maurer1981}, and of (the equivalent notion of) `compositions of set systems' \citep{Mohring_Radermacher1984}, hence also of the more abstract `set operads' \citep{Mendez2015}.
Contrary to what happens for resolutions, compositions do not differentiate fibers according to the status of their index (whereas resolutions do because of Requirement~\textsf{(R3)}). 
This is the reason why a composition of convex geometries may fail to be a convex geometry, as the next example shows. 

\begin{example}\label{ex_compos_not_convex_geometry}
Let  
\begin{alignat*}{2}
	X\;&=\;\{1,2\},\qquad& \GGG_X\;&=\;\{\es, \{1\}, X\},\\
	Y_1\;&=\;\{a_1,b_1\},\qquad& \GGG_1\;&=\; 2^{Y_1},\\
	Y_2\;&=\;\{a_2\},\qquad& \GGG_2\;&=\; 2^{Y_2}.
\end{alignat*}
Clearly, $(X,\GGG_X)$ and $(Y_i,\GGG_i)$, for $i=1,2$, are all convex geometries; in fact, they are even ordinal. 
Their (nontrivial) composition $\CCC_Z$, defined on $Z=\{a_1,b_1,a_2\}$, contains both $\{a_1,a_2\}$ and $\{b_1,a_2\}$ but not their intersection $\{a_2\}$, and so it fails to be a convex geometry.  
Instead, the resolution $(Z,\GGG_Z) = (X,\GGG_X) \boxleft (Y_i,\GGG_i)_{i =1}^2$ is a convex geometry (in fact, an ordinal one), because $\GGG_Z =\{\es, \{a_1\}, \{b_1\}, \{a_1,b_1\},  Z\}$.
\end{example}

As announced in the Introduction, we have:

\begin{theorem}\label{THM_resolution}
A resolution of convex geometries is a convex geometry.
\end{theorem}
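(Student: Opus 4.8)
The plan is to verify directly that $\GGG_Z$ satisfies the three axioms \textsf{(G1)}--\textsf{(G3)} of Definition~\ref{DEF_convex_geometry}. Axiom \textsf{(G1)} is immediate: taking $A=\es$ gives $\pi(A)=\es\in\GGG_X$ by \textsf{(G1)} for the base, while \textsf{(R2)} and \textsf{(R3)} hold vacuously, so $\es\in\GGG_Z$.

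The heart of the argument is closure under intersection, \textsf{(G2)}, and this is precisely where Requirement~\textsf{(R3)} (absent in compositions) does the work. Given $A,B\in\GGG_Z$, write $P=\pi(A)$ and $Q=\pi(B)$, both in $\GGG_X$. The key subtlety is that $\pi(A\cap B)$ is generally a proper subset of $P\cap Q$: since $(A\cap B)\cap Y_x=(A\cap Y_x)\cap(B\cap Y_x)$, an index $x\in P\cap Q$ fails to lie in $\pi(A\cap B)$ exactly when these two fibers are disjoint. I would first observe that such a dropped $x$ must be extreme in both $P$ and $Q$: if $x$ were non-extreme in, say, $P$, then \textsf{(R3)} for $A$ forces $A\cap Y_x=Y_x$, whence $(A\cap Y_x)\cap(B\cap Y_x)=B\cap Y_x\neq\es$. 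Thus the set $D$ of dropped indices satisfies $D\sbs(P\cap Q)\cap\ex_{\GGG_X}(P)$, and Lemma~\ref{LEMMA_alpha_and_rho_for_CG}(i) applied to $P\cap Q\sbs P$ gives $D\sbs\ex_{\GGG_X}(P\cap Q)$. Since $P\cap Q\in\GGG_X$ by \textsf{(G2)} for the base, Lemma~\ref{LEMMA_extreme_for_convex}(iii) yields $\pi(A\cap B)=(P\cap Q)\smu D\in\GGG_X$, establishing \textsf{(R1)}. Requirement~\textsf{(R2)} for $A\cap B$ then follows because each fiber $(A\cap B)\cap Y_x=(A\cap Y_x)\cap(B\cap Y_x)$ is an intersection of two members of $\GGG_x$; and \textsf{(R3)} follows from the rephrasing of Lemma~\ref{LEMMA_alpha_and_rho_for_CG}(i): if $x$ is non-extreme in $\pi(A\cap B)\sbs P$ it is non-extreme in $P$, so \textsf{(R3)} for $A$ gives $Y_x\sbs A$, and symmetrically $Y_x\sbs B$, hence $Y_x\sbs A\cap B$.

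For upgradability \textsf{(G3)}, fix $G\in\GGG_Z\smu\{Z\}$ and set $P=\pi(G)$. I would split into two cases. If some extreme index $x\in\ex_{\GGG_X}(P)$ has a non-full fiber, i.e.\ $G\cap Y_x\ssbs Y_x$, then upgradability of $\GGG_x$ supplies $y\in Y_x\smu(G\cap Y_x)$ with $(G\cap Y_x)\cup\{y\}\in\GGG_x$; one checks that $G\cup\{y\}$ still projects onto $P$ and satisfies \textsf{(R1)}--\textsf{(R3)}. Otherwise every extreme fiber is full, and together with \textsf{(R3)} this forces $G=\bigcup_{x\in P}Y_x$ with $P\neq X$; upgradability of the base then yields $x\in X\smu P$ with $P\cup\{x\}\in\GGG_X$, where $x$ is extreme in $P\cup\{x\}$ by Lemma~\ref{LEMMA_extreme_for_convex}(i). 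Choosing a convex singleton $\{z\}\in\GGG_x$ (which exists by upgrading $\es$ inside the fiber), the set $G\cup\{z\}$ satisfies \textsf{(R1)}--\textsf{(R3)}, the crucial point being that $x$ is extreme in the new projection, so \textsf{(R3)} imposes no condition on $Y_x$.

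I expect the main obstacle to be the control of $\pi(A\cap B)$ in \textsf{(G2)}: pinning down exactly which indices are lost and recognizing them as extreme elements of $P\cap Q$ is the conceptual core that makes resolutions---unlike compositions---closed under intersection. The remaining verifications of \textsf{(R2)} and \textsf{(R3)}, together with both cases of \textsf{(G3)}, should be routine once the lemmas on the extreme operator are in hand.
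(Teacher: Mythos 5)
Your proof is correct and takes essentially the same approach as the paper's: verify \textsf{(G1)}--\textsf{(G3)} by checking \textsf{(R1)}--\textsf{(R3)}, with the key observation that the indices dropped from $\pi(A)\cap\pi(B)$ are extreme, so that Lemma~\ref{LEMMA_extreme_for_convex}(iii) applies, and the same two-case split for upgradability. The only cosmetic difference is in \textsf{(G2)}: you delete the dropped set directly from the convex set $\pi(A)\cap\pi(B)$ after transferring extremeness via Lemma~\ref{LEMMA_alpha_and_rho_for_CG}(i), whereas the paper deletes it from $\pi(A)$ and $\pi(B)$ separately and then intersects.
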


\begin{proof}
Let $(X,\GGG_X)$ be a convex geometry, and $\{(Y_x,\GGG_x) \st x \in X\}$ a family of convex geometries such that the sets $Y_x$'s are pairwise disjoint and disjoint from $X$.
To prove that also $(Z,\GGG_Z)= (X,\GGG_X) \boxleft (Y_x,\GGG_x)_{x \in X}$ is a convex geometry, we show that $\GGG_Z$ satisfies Axioms~\textsf{(G1)--(G3)} in Definition~\ref{DEF_convex_geometry}  by using Requirements~\textsf{(R1)--(R3)} in Definition~\ref{DEF_resolution_of_cg}. 
To simplify notation, here we abbreviate $\ex_{\GGG_X}$ into $\ex$. 

\begin{itemize}
  \item[\textsf{(G1)}] The empty set is in $\GGG_Z$, because it trivially satisfies Requirements~\textsf{(R1)--(R3)}. 
  \item[\textsf{(G2)}] To prove that $\GGG_Z$ is closed under intersection, suppose $F$ and $G$ are arbitrary elements of $\GGG_Z$, and so they satisfy \textsf{(R1)--(R3)}.  We  show that $F \cap G$ satisfies \textsf{(R1)--(R3)} as well.  
  
  For \textsf{(R1)}, we prove $\pi(F \cap G) \in \GGG_X$. 
  Set $E := (\pi(F) \cap \pi(G)) \setminus \pi(F \cap G)$.
  Elementary computations yield $\pi(F \cap G) = (\pi(F) \setminus E) \cap (\pi(G) \setminus E)$.
  Since $\GGG_X$ is closed under intersection, it suffices to show that both $\pi(F) \setminus E$ and $\pi(G) \setminus E$ are in $\GGG_X$. 
  
  \textsc{Claim:} $E \sbs \ex(\pi(F)) \cap \ex(\pi(G))$. 
  Indeed, if $x \in E$, then $F \cap Y_x \neq \es$, $G \cap Y_x \neq \es$, and $(F \cap Y_x) \cap (G \cap Y_x) = \es$.   
  Then $x\in\pi(F)$ and not $Y_x \sbs F$, so by \textsf{(R3)} $x\in\ex(\pi(F))$.  
  The proof that the inclusion $E \sbs \ex(\pi(G))$ holds is similar,  and  so the Claim is established.
    
   Since $\pi(F),\pi(G) \in \GGG_X$ by \textsf{(R1)}, now the Claim and Lemma~\ref{LEMMA_extreme_for_convex}(iii) allow us to conclude what we were after, namely $\pi(F) \setminus E$ and $\pi(G) \setminus E$ are in $\GGG_X$. 
   
Proving \textsf{(R2)} is easy: we know $F \cap Y_x \in \GGG_x$
and $G \cap Y_x \in \GGG_x$, which implies $(F \cap G) \cap Y_x=(F \cap Y_x) \cap (G \cap Y_x)$ is in $\GGG_x$, as claimed. 

Finally, to establish \textsf{(R3)}, let $x \in \pi(F \cap G) \setminus \ex(\pi(F \cap G))$. 
Then $x \in \pi(F)$, and moreover $x \notin \ex(\pi(F))$ by Lemma~\ref{LEMMA_alpha_and_rho_for_CG}(i) (for $A=\pi(F \cap G)$ and $B=\pi(F)$).  
Thus, since $F$ satisfies \textsf{(R3)}, we have $Y_x \sbs F$.  
Similarly, $Y_x \sbs G$.  
It follows that $Y_x \sbs F \cap G$, proving that $F \cap G$ satisfies \textsf{(R3)}.

\item[\textsf{(G3)}] To show that $\GGG_Z$ is upgradable, let $F\in\GGG_Z\setminus\{Z\}$.  
If $F$ hits some fiber $Y_x$ without including all of it, then $F \cap Y_x$ belongs to $\GGG_x$ by \textsf{(R2)},
hence, by the upgradability of the convex geometry $\GGG_x$,  there is some $y$ in $Y_x \setminus F$ such that $(F \cap Y_x) \cup \{y\} \in \GGG_x$. 
It follows that $F \cup \{y\}$ is in $\GGG_Z$.
On the other hand, if $F$ is a union of fibers, then, since $F \neq Z$ and $\pi(F)\in\GGG_X$, there is $x$ in $X \setminus \pi(F)$ such that $\pi(F) \cup \{x\} \in \GGG_X$.  
There exists $y$ in $Y_x$ such that $\{y\}\in\GGG_x$.  Then $F \cup \{y\} \in \GGG_Z$ (because $x\in\ex(\pi(F\cup\{y\})$).
\end{itemize}
This completes the proof.  
\end{proof}

\begin{remark}  \label{REM_trivia_on_resolutions_cg} 
Let $(Z,\GGG_Z)= (X,\GGG_X) \boxleft (Y_x,\GGG_x)_{x \in X}$ be a resolution of convex geometries. 
For any $A \in 2^X$, we have
\begin{equation} \label{EQ_equivalence0}
\pi^{-1}(A) \in \GGG_Z \quad \iff \quad A \in \GGG_X ,
\end{equation}
hence, in particular, for any $x \in X$, 
\begin{equation} \label{EQ_equivalences_for_cg}
\GGG_x \sbs \GGG_Z \quad\iff\quad Y_x \in \GGG_Z \quad\iff\quad
\{x\} \in \GGG_X.
\end{equation}
(To prove that $Y_x \in \GGG_Z$ implies $\GGG_x \sbs \GGG_Z$, observe that Requirement \textsf{(R3)} is vacuously satisfied, because the unique element of $\{x\}$ is extreme.)
Also, the resolution is atomistic if and only if the base and all fibers are atomistic.
Moreover, for any $x \in X$, the equivalence
\begin{equation} \label{EQ_equivalence2}
Z \setminus Y_x \in \GGG_Z \quad \iff \quad x \in \ex_{\GGG_X}(X)
\end{equation}
holds. 
Any fiber $(Y_x,\GGG_x)$ is a subgeometry of $(Z,\GGG_Z)$.
Furthermore, if $S \sbs Z$ is a \textsl{transversal} (i.e., $S$ intersects each fiber in exactly one element), then the convex geometry induced by $(Z,\GGG_Z)$ on $S$ is isomorphic to $(X,\GGG_X)$. 
\end{remark}

We shall show in Section~\ref{SECT:special types of cg} that the two main classes of convex geometries examined in this paper, namely ordinal (Example~\ref{EX_ordinal_Convex_Geometries}) and affine (Example~\ref{EX_affine_CG}), behave in a radically different way with respect to resolutions: in fact, ordinality is preserved whereas affineness is not. 
Specifically, the resolution of ordinal convex geometries is again ordinal, and this procedure encapsulates a well-known construction on posets (Theorem~\ref{THM_composition_of_partial_orders}).
On the other hand, we shall provide an elementary example of a resolution of affine convex geometries that fails to be affine (Example~\ref{EX_affinity_is_unstable_under_resolutions}).

We conclude these preliminaries with a technical result, which describes the extreme operator and the convex hull operator in a resolution.

\begin{lemma}\label{LEMMA_conv_ex} 
Let $(Z,\GGG_Z) = (X,\GGG_X) \boxleft (Y_x,\GGG_x)_{x \in X}$ be a resolution of convex geometries. 
For any $A \subseteq Z$, we have
\begin{equation}\label{EQ_conv_Z}
	\conv_{\GGG_Z}(A) \;=\; \bigcup_{x \,\in\, \ex_{\GGG_X}\!(\pi(A))}  \conv_{\GGG_x}(A \cap Y_x)   \;\cup\;  \bigcup_{x \,\in \,\conv_{\GGG_X}\!(\pi(A)) \setminus \ex_{\GGG_X}\!(\pi(A))} Y_x
\end{equation} 
and
\begin{equation}\label{EQ_ex_Z}
	\ex_{\GGG_Z}(A) \;=\; \bigcup_{x\, \in \,\ex_{\GGG_X}\!(\pi(A))} \ex_{\GGG_x}(A \cap Y_x).
\end{equation} 
\end{lemma}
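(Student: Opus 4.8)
The plan is to prove the convex-hull formula \eqref{EQ_conv_Z} first, by the usual three-step argument for a convex hull, and then to deduce the extreme-operator formula \eqref{EQ_ex_Z} from it. Write $B$ for the right-hand side of \eqref{EQ_conv_Z}. The first routine step is to check that $A \sbs B$: every $a \in A$ lies in some fiber $Y_x$ with $x \in \pi(A)$, and either $x \in \ex_{\GGG_X}(\pi(A))$, so that $a \in A \cap Y_x \sbs \conv_{\GGG_x}(A \cap Y_x)$, or $x \in \pi(A) \smu \ex_{\GGG_X}(\pi(A)) \sbs \conv_{\GGG_X}(\pi(A)) \smu \ex_{\GGG_X}(\pi(A))$, so that $a \in Y_x$ is captured by the second union. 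Since the fibers are pairwise disjoint and $\ex_{\GGG_X}(\pi(A)) \sbs \pi(A) \sbs \conv_{\GGG_X}(\pi(A))$, a direct computation then gives $\pi(B) = \conv_{\GGG_X}(\pi(A))$.

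Next I would verify that $B \in \GGG_Z$ by checking \textsf{(R1)}--\textsf{(R3)}. Requirement \textsf{(R1)} is immediate since $\pi(B) = \conv_{\GGG_X}(\pi(A)) \in \GGG_X$. For \textsf{(R2)}, disjointness of the fibers gives $B \cap Y_x = \conv_{\GGG_x}(A \cap Y_x) \in \GGG_x$ when $x \in \ex_{\GGG_X}(\pi(A))$, and $B \cap Y_x = Y_x \in \GGG_x$ when $x \in \conv_{\GGG_X}(\pi(A)) \smu \ex_{\GGG_X}(\pi(A))$. For \textsf{(R3)}, Lemma~\ref{LEMMA_Monjardet_Rad}(i) yields $\ex_{\GGG_X}(\pi(B)) = \ex_{\GGG_X}(\conv_{\GGG_X}(\pi(A))) = \ex_{\GGG_X}(\pi(A))$, hence $\pi(B) \smu \ex_{\GGG_X}(\pi(B)) = \conv_{\GGG_X}(\pi(A)) \smu \ex_{\GGG_X}(\pi(A))$, and for each such $x$ the second union gives $Y_x \sbs B$, as required.

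The core of the argument is minimality: if $G \in \GGG_Z$ with $A \sbs G$, then $B \sbs G$. From $\pi(A) \sbs \pi(G) \in \GGG_X$ we get $\conv_{\GGG_X}(\pi(A)) \sbs \pi(G)$. For $x \in \ex_{\GGG_X}(\pi(A))$, Requirement \textsf{(R2)} for $G$ gives $G \cap Y_x \in \GGG_x$, and $A \cap Y_x \sbs G \cap Y_x$ forces $\conv_{\GGG_x}(A \cap Y_x) \sbs G$. For $x \in \conv_{\GGG_X}(\pi(A)) \smu \ex_{\GGG_X}(\pi(A))$, the point is that $x$ is a \emph{non-extreme} element of $\conv_{\GGG_X}(\pi(A))$ by Lemma~\ref{LEMMA_Monjardet_Rad}(i); since $\conv_{\GGG_X}(\pi(A)) \sbs \pi(G)$, the rephrasing of Lemma~\ref{LEMMA_alpha_and_rho_for_CG}(i) shows $x \notin \ex_{\GGG_X}(\pi(G))$, whence \textsf{(R3)} for $G$ forces $Y_x \sbs G$. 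Thus $B \sbs G$, completing the proof of \eqref{EQ_conv_Z}. This step---transferring non-extremeness from the hull $\conv_{\GGG_X}(\pi(A))$ up to the larger base-convex set $\pi(G)$---is the one place where the argument is not purely mechanical, and it is exactly where Requirement \textsf{(R3)} and the two cited lemmas must be combined; this is where I expect the main obstacle to lie.

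Finally, to obtain \eqref{EQ_ex_Z} I would use that $a \in \ex_{\GGG_Z}(A)$ iff $a \notin \conv_{\GGG_Z}(A \smu \{a\})$, evaluating the latter hull via \eqref{EQ_conv_Z}. Fix $a \in A$ and set $x := \pi(a)$, distinguishing whether $A \cap Y_x = \{a\}$ (so $\pi(A \smu \{a\}) = \pi(A) \smu \{x\}$) or $A \cap Y_x$ contains a further point (so $\pi(A \smu \{a\}) = \pi(A)$). In both cases, reading off the two unions in \eqref{EQ_conv_Z} and using that, by definition of extreme element, $x \in \ex_{\GGG_X}(\pi(A))$ is equivalent to $x \notin \conv_{\GGG_X}(\pi(A) \smu \{x\})$, one finds that $a \notin \conv_{\GGG_Z}(A \smu \{a\})$ holds if and only if $x \in \ex_{\GGG_X}(\pi(A))$ and $a \in \ex_{\GGG_x}(A \cap Y_x)$. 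Since $\ex_{\GGG_x}(A \cap Y_x) \sbs Y_x$ and the fibers are disjoint, this characterization is precisely the membership condition for the right-hand side of \eqref{EQ_ex_Z}, which yields the stated equality.
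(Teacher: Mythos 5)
Your proof is correct and follows essentially the same route as the paper's: the same three-step verification of \eqref{EQ_conv_Z} (including the key step of transferring non-extremeness from $\conv_{\GGG_X}(\pi(A))$ up to $\pi(G)$ via Lemmas~\ref{LEMMA_Monjardet_Rad}(i) and \ref{LEMMA_alpha_and_rho_for_CG}(i) so that \textsf{(R3)} applies), and the same case split on whether $A \cap Y_x$ reduces to $\{a\}$ for the extreme-operator formula. The only difference is cosmetic: the paper establishes the forward inclusion of \eqref{EQ_ex_Z} by producing a separating convex set via Lemma~\ref{LEMMA_chrz_extreme_elements}(3), whereas you read both inclusions off \eqref{EQ_conv_Z} applied to $A \setminus \{a\}$, which is a slightly more unified but equivalent argument.
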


\begin{proof} 
Let $A \in 2^Z$.
We prove \eqref{EQ_conv_Z}.
To start, we show that the right-hand side of \eqref{EQ_conv_Z} belongs to $\GGG_Z$. 
Indeed, it satisfies \textsf{(R1)}, because its projection is equal to $\conv_{\GGG_X}(\pi(A))$, and so it is in $\GGG_X$.
Moreover, it satisfies \textsf{(R2)} and \textsf{(R3)}, too: use the special case $\ex_{\GGG_X}(\conv_{\GGG_X}(\pi(A)))=\ex_{\GGG_X}(\pi(A))$ of Lemma~\ref{LEMMA_Monjardet_Rad}\eqref{item_MR_1}. 
This proves that the right-hand side of \eqref{EQ_conv_Z} is convex.
Therefore, to complete the proof of \eqref{EQ_conv_Z}, it suffices to show that any convex set $G$ in $\GGG_Z$ that includes $A$ also includes the right-hand side.  

To that end, let $G \in \GGG_Z$ be such that $A \sbs G$. 
Take any $x \in \ex_{\GGG_X}(\pi(A))$.  
Since the convex set $G$ includes $A \cap Y_x$, it follows that $G \cap Y_x$ is a convex set in $Y_x$ including $A \cap Y_x$, and so $\conv_{\GGG_x}(A \cap Y_x) \sbs G \cap Y_x \sbs G$. 
On the other hand, let $x \in \conv_{\GGG_X}(\pi(A)) \setminus \ex_{\GGG_X}(\pi(A))$. 
Observe that Requirement~\textsf{(R1)} yields $\pi(A) \sbs \pi(G) \in \GGG_X$, hence $\conv_{\GGG_X}(\pi(A))\sbs \pi(G)$.  Since $\ex_{\GGG_X}(\pi(G)) \cap \conv_{\GGG_X}(\pi(A)) \sbs \ex_{\GGG_X}(\pi(A))$ by Lemma~\ref{LEMMA_alpha_and_rho_for_CG}(i), it follows that $x \in \pi(G) \setminus \ex_{\GGG_X}(\pi(G))$, and so $Y_x \sbs G$ by Requirement \textsf{(R3)}.  
This proves that \eqref{EQ_conv_Z} holds. 
\smallskip

Next, we prove \eqref{EQ_ex_Z}. 
For the forward inclusion, let $z \in \ex_{\GGG_Z}(A)$. 
Set $x := \pi(z)$.
To prove the claim, we show that (i) $x \in \ex_{\GGG_X}(\pi(A))$, and (ii) $z \in \ex_{\GGG_x}(A \cap Y_x)$.
By Lemma~\ref{LEMMA_chrz_extreme_elements}(3), there exists $G \in \GGG_Z$ such that $A\setminus\{z\} \sbs G$ and $z\notin G$.  
Now, if $G \cap Y_x \neq \es$, then $Y_x \nsubseteq G$, and so $x \in \ex_{\GGG_X}(\pi(G))$ by Requirement~\textsf{(R3)}.
It follows that $x \in \ex_{\GGG_X}(\pi(A))$ by Lemma~\ref{LEMMA_alpha_and_rho_for_CG}(i).  
On the other hand, suppose $G \cap Y_x = \es$.
Then the set $\pi(G)$ is convex in $\GGG_X$ and includes $\pi(A)\setminus\{x\}$ but not $x$, which again implies $x \in \ex_{\GGG_X}(\pi(A))$.  
This proves (i).  
For (ii), observe that $G \cap Y_x$ is convex in $Y_x$, includes $(A\cap Y_x)\setminus\{z\}$ but does not contain $z$; therefore, $z \in \ex_x(A \cap Y_x)$.  
This completes the proof of the forward inclusion in \eqref{EQ_ex_Z}.

For the reverse inclusion, let $x \in \ex_{\GGG_X}(\pi(A))$ and $z \in \ex_{\GGG_x}(A \cap Y_x)$.  It suffices to show $z\notin\conv_{\GGG_Z}(A\setminus\{z\})$, which we do by applying  \eqref{EQ_conv_Z} to $A\setminus\{z\}$ in two exhaustive cases. 
If $(A \setminus\{z\}) \cap Y_x = \es$, then $x \notin \conv_{\GGG_x} (\pi(A \setminus \{z\})$ because $x\in \ex_{\GGG_x}(\pi(A))$, and so $z\notin\conv_{\GGG_Z}(A\setminus\{z\})$.
If $(A \setminus\{z\}) \cap Y_x \neq \es$, then $\pi(A\setminus\{z\})=\pi(A)$, so $x\in \ex_{\GGG_x}(\pi(A\setminus\{z\}))$.  Together with $z \notin \conv_{\GGG_x}( (A \setminus\{z\}) \cap Y_x)$, this gives again $z\notin\conv_{\GGG_Z}(A\setminus\{z\})$.
\end{proof}


\subsection{Primitivity vs Resolvability. Shrinkable Sets}
\label{SUBSECT_primitive vs resolvable}

Using resolutions, convex geometries can be partitioned in two classes: 

\begin{definition}\label{DEF_non_trivial}    
 A convex geometry is \textsl{primitive} (or \textsl{irresolvable}) if it cannot be obtained as a nontrivial resolution; otherwise, it is \textsl{resolvable}.  
\end{definition}

How can we \textit{directly} recognize whether a given convex geometry is primitive or resolvable?  
Definition~\ref{DEF_resolution_of_cg} implies that the convex geometry resulting from a resolution is partitioned into fibers.  
Then a naive algorithm to test primitiveness should check one by one all nontrivial partitions of the convex geometry, derive the corresponding base and fibers, and stop when their resolution is the initial convex geometry.  
Obviously, this algorithm is highly ineffective. 
A better algorithm can be derived from Theorem~\ref{THM_characterizing_shrinkable_c} below.
To state it, we need the following notion of `shrinkability': 

\begin{definition}\label{DEF_shrinkable} 
Let $(Z,\GGG)$ be a convex geometry.
A subset $S$ of $Z$ is \textsl{shrinkable} (in $Z$) if it is a nontrivial fiber in some nontrivial resolution producing $(Z,\GGG)$.  
\end{definition}

Observe that if $S$ is shrinkable in $X$, then $1 < \vert S \vert < \vert X \vert$.  
Clearly, a convex geometry is primitive if and only if it has no shrinkable set. 
The following characterization of shrinkability yields a more effective test for the primitiveness of a convex geometry:  

\begin{theorem}\label{THM_characterizing_shrinkable_c} 
Let $(Z,\GGG_Z)$ be a convex geometry.
The following statements are equivalent for a subset $S$ of $Z$ such that $1 < \vert S \vert < \vert Z \vert$: 
\begin{enumerate}[\rm(i)] 
	\item $S$ is shrinkable;
   \item $S$ satisfies the following two properties:
    \begin{enumerate}
    \item[\rm\textsf{(S1)}] $(\forall G \in \GGG_Z)$\; $(G \cap S \neq S \;\implies\; G\setminus S \in \GGG_Z)$;
    \item[\rm\textsf{(S2)}] $(\forall G,H \in \GGG_Z)$\;  
    $(G \cap S \neq \es \land G\setminus S \in \GGG_Z) \;\implies\;(G\setminus S) \cup (H \cap S) \in \GGG_Z$.   
    \end{enumerate} 
\end{enumerate}
\end{theorem}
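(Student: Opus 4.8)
The plan is to prove the two implications separately. Throughout I would exploit the concrete description of fibers furnished by Requirements~\textsf{(R1)}--\textsf{(R3)} together with Lemmas~\ref{LEMMA_extreme_for_convex} and~\ref{LEMMA_alpha_and_rho_for_CG}, writing $\ex$ for $\ex_{\GGG_X}$ and $\pi$ for the projection. A translation I expect to use repeatedly is this: for $G\in\GGG_Z$ and a fiber $Y_{x_0}$, the removal $G\smu Y_{x_0}$ is again convex precisely when $x_0$ is \emph{not} a non-extreme index of $\pi(G)$ (that is, $x_0\notin\pi(G)$ or $x_0\in\ex(\pi(G))$). Necessity is immediate from \textsf{(R1)} via Lemma~\ref{LEMMA_extreme_for_convex}(i), since $\pi(G\smu Y_{x_0})=\pi(G)\smu\{x_0\}$ is convex iff $x_0\in\ex(\pi(G))$; sufficiency follows by checking \textsf{(R1)}--\textsf{(R3)} for $G\smu Y_{x_0}$ directly, using Lemma~\ref{LEMMA_extreme_for_convex}(iii) with $E=\{x_0\}$ for \textsf{(R1)} and the rephrasing of Lemma~\ref{LEMMA_alpha_and_rho_for_CG}(i) for \textsf{(R3)}.

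For (i) $\Rightarrow$ (ii), I would write $S=Y_{x_0}$ for the nontrivial fiber. To establish \textsf{(S1)}, assume $G\cap S\neq S$; then \textsf{(R3)} forbids $x_0$ from being a non-extreme index of $\pi(G)$, so the translation above yields $G\smu S\in\GGG_Z$ at once. To establish \textsf{(S2)}, the hypotheses $G\cap S\neq\es$ and $G\smu S\in\GGG_Z$ force, again by the translation, that $x_0\in\ex(\pi(G))$; I would then put $K:=(G\smu S)\cup(H\cap S)$ and verify \textsf{(R1)}--\textsf{(R3)} for $K$, splitting on whether $H\cap S$ is empty. The projection of $K$ is $\pi(G)$ or $\pi(G)\smu\{x_0\}$, both convex; \textsf{(R2)} holds because the only altered fiber is $Y_{x_0}$, whose new trace $H\cap Y_{x_0}$ lies in $\GGG_{x_0}$; and \textsf{(R3)} is handled as in the preliminary translation, since every non-extreme index of $\pi(K)$ is a non-extreme index of $\pi(G)$ and hence already carries its full fiber inside $G\smu S\sbs K$.

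For the converse (ii) $\Rightarrow$ (i), I would build an explicit resolution that collapses $S$ to a single new index. Taking fresh symbols, set $X:=\{s\}\cup\{\bar z:z\in Z\smu S\}$ with $\pi$ sending all of $S$ to $s$ and each $z\in Z\smu S$ to $\bar z$; let the fiber over $s$ be $S$ equipped with the induced subgeometry $\GGG_s:=\{G\cap S:G\in\GGG_Z\}$ (a convex geometry by Definition~\ref{DEF_subgeometries}), and let every other fiber be a trivial singleton. Define the base family by pullback, $\GGG_X:=\{A\sbs X:\pi^{-1}(A)\in\GGG_Z\}$, where $\pi^{-1}$ replaces $s$ by the whole set $S$. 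Axioms \textsf{(G1)} and \textsf{(G2)} for $\GGG_X$ are immediate because $\pi^{-1}$ commutes with intersection. The substantive point is upgradability \textsf{(G3)}: given $A\in\GGG_X\smu\{X\}$, upgrade $\pi^{-1}(A)$ inside $\GGG_Z$ by some point $w$; if $w\notin S$ the new index $\bar w$ upgrades $A$, while if $w\in S$ then $s\notin A$ and I would invoke \textsf{(S2)} with $G=\pi^{-1}(A)\cup\{w\}$ and $H=Z$ to fill the \emph{entire} fiber at once, obtaining $\pi^{-1}(A)\cup S\in\GGG_Z$ and hence the upgrade $s$.

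It remains to check that the resulting resolution $(Z,\GGG_{Z'})$ coincides with $(Z,\GGG_Z)$, and this is where \textsf{(S1)} and \textsf{(S2)} do the real work. Unwinding the requirements, membership $G\in\GGG_{Z'}$ amounts to: $G\cap S\in\GGG_s$ (\textsf{(R2)}); $S\sbs G$ whenever $s$ is a non-extreme index (\textsf{(R3)}); and $\pi^{-1}(\pi(G))\in\GGG_Z$, which reads $G\in\GGG_Z$ if $G\cap S=\es$ and $G\cup S\in\GGG_Z$ if $G\cap S\neq\es$ (\textsf{(R1)}). For $\GGG_Z\sbs\GGG_{Z'}$ I would verify these for $G\in\GGG_Z$: \textsf{(R2)} is automatic, \textsf{(R3)} follows from \textsf{(S1)} (if $S\not\sbs G$ then $G\smu S\in\GGG_Z$, making $s$ extreme), and the equality $G\cup S\in\GGG_Z$ needed for \textsf{(R1)} comes from \textsf{(S1)} and \textsf{(S2)} in tandem (remove the partial trace, then refill it with $H=Z$). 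For the reverse inclusion, take $G\in\GGG_{Z'}$ with $G\cap S\neq\es$, so \textsf{(R1)} gives $G\cup S\in\GGG_Z$; by \textsf{(R3)} either $S\sbs G$ (whence $G=G\cup S\in\GGG_Z$) or $s$ is extreme in $\pi(G)$, the latter giving $G\smu S\in\GGG_Z$ by the translation. In the extreme case I would apply \textsf{(S2)} to the saturated set $G\cup S$ (its $S$-complement is $G\smu S\in\GGG_Z$ and its $S$-trace is $S\neq\es$), with $H\in\GGG_Z$ chosen by \textsf{(R2)} so that $H\cap S=G\cap S$, obtaining $(G\smu S)\cup(G\cap S)=G\in\GGG_Z$. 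Finally $\vert S\vert>1$ makes the fiber over $s$ nontrivial and $\vert S\vert<\vert Z\vert$ makes $\vert X\vert\geqslant2$, so the resolution is nontrivial and $S$ is shrinkable. I expect the main obstacle to be exactly this reconstruction, and in particular the realization that \textsf{(S2)} must be applied to the saturated set $G\cup S$ rather than to $G$ itself.
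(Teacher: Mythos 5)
Your proof is correct, and the forward implication (i)$\implies$(ii) follows the paper's argument essentially verbatim (extract that the index of $S$ is extreme in $\pi(G)$ from \textsf{(R3)}, then verify \textsf{(R1)}--\textsf{(R3)} for $G\smu S$ and for $(G\smu S)\cup(H\cap S)$). The converse uses the same construction as the paper---collapse $S$ to a single base element $s$, keep singleton fibers elsewhere, and put the induced geometry $\{G\cap S : G\in\GGG_Z\}$ on the fiber over $s$---but you define the base geometry by pullback, $\GGG_X=\{A\sbs X : \pi^{-1}(A)\in\GGG_Z\}$, whereas the paper uses the pushforward $\GGG_X=\{\pi(G) : G\in\GGG_Z\}$. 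Under \textsf{(S1)} and \textsf{(S2)} the two families coincide, but the choice redistributes the work: with the pullback, closure under intersection \textsf{(G2)} is free because $\pi^{-1}$ commutes with intersection, while the paper must handle the case $\pi(G\cap H)\ssbs\pi(G)\cap\pi(H)$ by invoking \textsf{(S1)} and \textsf{(S2)} to manufacture the witness $(G\smu S)\cup(H\cap S)$; conversely your upgradability argument fills the whole fiber $S$ in one step via \textsf{(S2)} with $H=Z$, where the paper iterates one-point upgrades until leaving $S$. Your final reconciliation of the resolution with $\GGG_Z$ is also sound, and the observation that \textsf{(S2)} must be applied to the saturated set $G\cup S$ (whose membership in $\GGG_Z$ is exactly what the pullback form of \textsf{(R1)} delivers) rather than to $G$ itself is precisely the point where a naive attempt would go circular; the paper reaches the same conclusion through its case split on $S\sbs G$ versus $S\nsubseteq G$. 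Net effect: same skeleton, with the pullback variant buying a shorter \textsf{(G2)} at the price of a slightly longer equivalence check at the end.
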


\begin{proof}
(i)$\implies$(ii). Suppose $S$ is shrinkable. Thus, $(Z,\GGG_Z)$ can be written as  
\begin{equation*}
	(Z,\GGG_Z) = (X,\GGG_X) \boxleft (Y_x,\GGG_x)_{x \in X},
\end{equation*}
where $S = Y_s$ for some $s \in X$.
Let $G,H \in \GGG_Z$, hence $\pi(G),\pi(H) \in \GGG_X$ by \textsf{(R1)}. 
Below we show that Properties~\textsf{(S1)} and \textsf{(S2)} hold for $S=Y_s$. 

For \textsf{(S1)}, suppose $G \in \GGG_Z$ is such that $\es \neq G \cap Y_s \neq Y_s$ 
(if $G \cap Y_s = \es$, then the result holds trivially). 
Then $s \in \ex_{\GGG_X}(\pi(G))$ by \textsf{(R3)}, hence Lemma~\ref{LEMMA_extreme_for_convex}(i) yields $\pi(G \setminus Y_s) = \pi(G) \setminus \{s\} \in \GGG_X$, which proves that \textsf{(R1)} holds for $G \setminus Y_s$. 
Furthermore, $G \setminus Y_s$ satisfies Requirement~\textsf{(R2)}, because so does $G$. 
Thus, to prove that $G \setminus Y_s \in \GGG_Z$, it remains to show that also Requirement~\textsf{(R3)} holds for $G \setminus Y_s$. 
To that end, observe that Lemma~\ref{LEMMA_alpha_and_rho_for_CG}(i) yields 
$\pi(G \setminus Y_s) \cap \ex_{\GGG_X}(\pi(G)) \sbs \ex_{\GGG_X}(\pi(G \setminus Y_s))$.
It follows that any non-extreme element of $\pi(G \setminus Y_s)$ is also non-extreme in $\pi(G)$. 
Hence $G \setminus Y$ satisfies \textsf{(R3)} because so does $G$.
	
For \textsf{(S2)}, suppose $G \in \GGG_Z$ satisfies $G \cap Y_s \neq \es$ and $G \setminus Y_s \in \GGG_Z$. 
If $H \cap Y_s = \es$, then \textsf{(S2)} holds.
Thus, let $H \cap Y_s \neq \es$.
Below we show that $(G\setminus Y_s) \cup (H \cap Y_s)$ is in $\GGG_Z$ by proving that  Requirements~\textsf{(R1)-(R3)} hold for it. 
Requirement~\textsf{(R1)} readily follows from $\pi((G\setminus Y_s) \cup (H \cap Y_s)) = \pi(G) \in \GGG_X$. 
Requirement~\textsf{(R2)} holds, because both $G$ and $H$ satisfy it. 
Finally, for \textsf{(R3)}, observe that $s$ is an extreme element for $\pi(G\setminus Y_s) \cup (H \cap Y_s)) = \pi(G)$, because $\pi(G),\pi(G)\setminus \{s\} \in \GGG_X$. 
Thus, $(G\setminus Y_s) \cup (H \cap Y_s)$ satisfies Requirement~\textsf{(R3)}, because so does $G$. 

\medbreak

(ii)$\implies$(i). Suppose $S$ is a subset of the convex geometry $(Z,\GGG_Z)$ such that $1 < \vert S \vert < \vert Z \vert$, and Properties~\textsf{(S1)} and \textsf{(S2)} hold for all $G,H \in \GGG_Z$.  
To show that $S$ is shrinkable, we shall express $(Z,\GGG_Z)$ as a nontrivial resolution having $S$ as its unique nontrivial fiber.  

Select $s \notin Z$.
Let $\sigma \colon Z \setminus S \to W$ be a bijection from $Z \setminus S$ onto a set $W$ that is disjoint from $Z \cup \{s\}$, and denote $x_z := \sigma(z)$ for all $z \in Z \setminus S$.   
As base set, take $X := W \cup \{s\}$.
Notice that $X \cap Z = \es$ by construction, and $\vert X \vert \geq 2$ because $S \neq Z$ by hypothesis. 
As fiber sets, take $Y_s := S$, and $Y_{x_z} := \{z\}$ for all $x_z \in W = X \setminus \{s\}$. 
Therefore, $Z$ is equal to $\bigcup_{x \in X} Y_x$, where $\vert X \vert \geq 2$ and $\vert Y_s \vert \geq 2$. 
As usual, let $\pi \colon Z \to X$ be the (projection) map defined by $\pi(z) := x_z$ if $z \in Z \setminus S$, and $\pi(z) := s$ if $z \in S$. 
Next, we endow both the base set $X$ and the fiber sets $Y_x$'s with convex geometries induced by $\GGG_Z$.  

Concerning the base, let $\GGG_X$ be the family of subsets of $X$ defined by 
$$
\GGG_X := \{\pi(G) \: \st \: G \in \GGG_Z \}.
$$ 
We now check that $(X,\GGG_X)$ is a convex geometry.
To begin with, observe that $\pi(\es)=\es \in \GGG_X$, that is, Axiom~\textsf{(G1)} holds.

To prove Axiom~\textsf{(G2)} (closure under intersection), we let $\pi(G),\pi(H) \in \GGG_X$, where $G,H \in \GGG_Z$, and show that $\pi(G) \cap \pi(H) \in \GGG_X$. 
We know that $\pi(G \cap H) \sbs \pi(G) \cap \pi(H)$. 
If equality holds, then we are immediately done, since $G \cap H \in \GGG_Z$. 
Otherwise, we must have $\es \neq G \cap S \neq S$, $\es \neq H \cap S \neq S$, $(G \cap H) \cap S = \emptyset$, and $\pi(G) \cap \pi(H) = \pi(G \cap H) \cup \{s\}$.
Now apply Property~\textsf{(S1)} to $G$ and next  Property~\textsf{(S2)} to $G$ and $H$, to derive that the set $K := (G \setminus S) \cup (H \cap S)$ is an element of $\GGG_Z$.
Since $K \cap H \in \GGG_Z$, and $\pi(G) \cap \pi(H) =  \pi(G \cap H) \cup \{s\} = \pi (K \cap H)$, we conclude that $\pi(G) \cap \pi(H)$ is an element of $\GGG_X$ also in this case, as claimed.   
This shows that $\GGG_X$ satisfies Axiom~\textsf{(G2)}.

To prove Axiom~\textsf{(G3)}, let $\pi(G) \neq X$, with $G\in\GGG_Z$.  
By Axiom~\textsf{(G3)} for $(Z,\GGG_Z)$, there is $z_1 \in Z \setminus G$  such that $G_1 := G \cup \{z_1\} \in \GGG_Z$.
If $G \cap S$ is either empty or equal to $S$, there holds $\pi(z_1)\notin \pi(G)$, and the set $\pi(G) \cup \{\pi(z_1)\} = \pi(G_1) \in \GGG_X$ is what we were looking for. 
On the other hand, suppose $\es \neq G \cap S \neq S$.  
If $z_1 \notin S$, then $\pi(G_1) = \pi(G) \cup \{x_{z_1}\}$, and we are done. 
Otherwise, there is $z_2 \in Z \setminus G_1$ such that $G_2 := G_1 \cup \{z_2\} \in \GGG_Z$. 
Again, if $z_2 \notin S$, the claim holds.  
Otherwise, we iterate. 
Since $\pi(G) \neq X$ and $X$ is finite, we shall eventually get $G_n := G_{n-1} \cup \{z_n\} \in \GGG_Z$ such that $\pi(G_i) = \pi(G)$ for all $i = 1,\ldots, n-1$, and $z_n \notin S$. 
Then $\pi(G_n) = \pi(G) \cup \{x_{z_n}\}$ proves that $\GGG_X$ satisfies \textsf{(G3)}.

To complete the definition of the resolution, we provide each fiber $Y_x$ with the convex geometry $\GGG_x$ induced by $\GGG_Z$, that is, 
$$
\GGG_x := \left\{
\begin{array}{lll}
  \{\es, \{x\}\} & \text{ if } x \in W,\\
  \{ G \cap S \:\st \: G \in \GGG_Z \}  & \text{ if } x =s. 
\end{array}
\right.
$$
Below we show that $(Z,\GGG_Z) = (X,\GGG_X) \boxleft (Y_x,\GGG_x)_{x \in X}$, thus completing the proof. 

\smallskip 

To start, we prove that any convex set $G$ from $\GGG_Z$ is also convex in the resolution $(X,\GGG_X) \boxleft (Y_x,\GGG_x)_{x \in X}$, that is, $G$ satisfies Requirements~\textsf{(R1)-(R3)}. 
Clearly $\pi(G)\in\GGG_X$ by the very definition of $\GGG_X$, thus \textsf{(R1)} holds for $G$. 
Furthermore, the intersection of $G$ with an arbitrary fiber $Y_x$ is convex in $Y_x$ by definition of $\GGG_x$, so $G$ also satisfies \textsf{(R2)}.  
Finally, if $G\cap S = \es$ or $G \cap S = S$, then \textsf{(R3)} holds trivially for $G$.  
On the other hand, if $\es \neq G \cap S \neq S$, then we can apply Property~\textsf{(S1)} to $S = Y_s$ to get $G \setminus S \in \GGG_Z$.  
Since $\{s\} = \pi(G) \setminus \pi(G \setminus S)$, we obtain that $s$ is an extreme element in the convex set $\pi(G) \in \GGG_X$.  
We conclude that $G$ satisfies \textsf{(R3)} also in this case, and so $G$ is convex in the resolution $(X,\GGG_X) \boxleft (Y_x,\GGG_x)_{x \in X}$.  

For the reverse inclusion, we show that if $K$ is a nonempty convex set in $(X,\GGG_X) \boxleft (Y_x,\GGG_x)_{x \in X}$, then $K\in\GGG_Z$.  
Since $\pi(K) \in \GGG_X$ by Requirement~\textsf{(R1)} of a resolution, there is $G \in \GGG_Z$ such that $\pi(K)=\pi(G)$.
Clearly, $K \cap Y_x = G \cap Y_x$ for all $x \in W$.
If also $K \cap S = G \cap S$ holds, then we have $K = G$, and we are immediately done.
Since the latter fact trivially holds if $K$ and $S$ are disjoint, we may assume that $K \cap S$ is different from both $\es$ and $G \cap S$ (so $G \cap S$ is nonempty, too).
Next we appeal to Requirement~\textsf{(R2)}, and get  
$K \cap S  \in \GGG_s$, hence, by the very definition of $\GGG_s$, there is $H \in \GGG_Z$ such that $K \cap S = H \cap S \neq \es$. 
It follows that $K = (G \setminus S) \cup (H \cap S)$.
To conclude our proof, we deal separately with the following two exhaustive cases.
 
(1)~In case $S \nsubseteq G$, we have $G \cap S \neq S$, hence Property~\textsf{(S1)} yields $G \setminus S \in \GGG_Z$.  Since $G \cap S \neq \es$, Property \textsf{(S2)} entails $K = (G \setminus S) \cup (H \cap S) \in \GGG_Z$, as required.
 
(2)~In case $S \sbs G$, %
we have $S \nsubseteq K$, because $K \cap S$ differs from $G \cap S$.   
Then, by Requirement~\textsf{(R3)}, $s$ is extreme in $\pi(K)$.  
The latter set is convex in $X$, and moreover it is equal to $\pi(G)$.  
We derive that $\pi(G)\setminus\{s\}$ is also convex in $\GGG_X$, and next that $G \setminus S$ belongs to $\GGG_Z$ (because $\pi(G)\setminus\{s\}$ must be the image by $\pi$ of a convex set from $\GGG_Z$, which must be $G \setminus S$).  
By \textsf{(S2)} we conclude $K = (G \setminus S) \cup (H \cap S) \in \GGG_Z$, as required.
\end{proof}
 
\begin{remark}\label{rmk_G_cap_S_1}
If in \textsf{(S2)} we replace $G \cap S \neq \es$ with $|G \cap S|=1$, then we get an equivalent condition.  
Indeed, assume the modified \textsf{(S2)} is true, and let us prove \textsf{(S2)} in case $|G \cap S| > 1$.   
Since both $G \setminus S$ and $G$ are convex by assumption, there exists some $s \in G \cap S$ such that $G' := (G \setminus S) \cup \{s\} \in \GGG_Z$ (this follows from the axioms of a convex geometry). 
By assumption the modified \textsf{(S2)} holds for $G'$.  
As we have $(G\setminus S) \cup (H \cap S)= (G'\setminus S) \cup (H \cap S)$, it follows that \textsf{(S2)} holds for $G$.
This completes the proof of the equivalence of the two conditions.  
Finally, observe that in the modified \textsf{(S2)}, the element $s$ forming $G \cap S$ is an extreme element of $G$, because $G$ and $G \setminus \{s\}$ are convex.
\end{remark}

Properties~\textsf{(S1)} and \textsf{(S2)} are logically independent, even on some (small) affine convex geometry $(Z,\GGG)$.
The next example shows such independence, 
at the same time exhibiting a resolvable, affine convex geometry. 

\begin {example}\label{ex_two_counter_examples}
Let $Z$ consist of four points $a,b,c,d$ in the real affine plane,  where $c$ belongs to the segment having $b$ and $d$ as extreme points, and $a$ is outside the line passing through the other three  points (see Figure~\ref{FIG_affine_CG_on_4_points}).   
Consider the affine convex geometry $\GGG_Z$ induced on $Z$. 

To show that \textsf{(S1)} and \textsf{(S2)} are mutually independent properties, let $S'=\{a,b\}$ and $S''=\{a,c\}$. 
Then, $S'$ satisfies \textsf{(S1)} (because all subsets of $Z\setminus S'$ are convex) but not \textsf{(S2)} (take $G=\{a,d\}$ and $H=\{b\}$).
On the other hand, $S''$ does not satisfy \textsf{(S1)} (take $G=\{b,c,d\}$) but satisfies \textsf{(S2)} (because there are only two sets in $Z$ that are not convex, namely  $\{b,d\}$ and $\{a,b,d\}$).

Observe also that $S=\{b,c,d\}$ is a shrinkable set in $(Z,\GGG_Z)$, which is therefore a resolvable convex geometry. 
In fact, it is easy to check that $(Z,\GGG_Z)$ is the resolution with base $(\{1,2\},2^{\{1,2\}})$ and fibers $(Y_1,\GGG_1)$ and $(Y_2,\GGG_2)$, where
\begin{alignat*}{2}
  Y_1\;&=\;\{a\},\qquad& \GGG_1\;&=\; 2^{Y_1},\\
  Y_2\;&=\;\{b,c,d\},\qquad& \GGG_2\;&=\; 2^{Y_2} \setminus \{b,d\}.
\end{alignat*}
Here, the base, the fibers, and the resolution are affine convex geometries.
\end{example}


\begin{figure}[ht]
\begin{center}
\begin{tikzpicture}[yscale=0.75]
\tikzstyle{point}=[circle,draw,fill=black, scale=0.25]

\node[point,label=right:\footnotesize$b$] (b) at (1,1){};
\node[point,label=right:\footnotesize$c$] (c) at (1,0){}; 
\node[point,label=right:\footnotesize$d$] (d) at (1,-1){};
\node[point,label=left:\footnotesize$a$] (a) at (-1,0){};
\node at (2,0.8) {$S$};
\draw (c) ellipse (7mm and 14mm);
\draw (1,-1.5)--(1,1.5);
\end{tikzpicture}

\kern-6mm
\end{center}
\caption{An affine convex geometry on four points, having $S$ as shrinkable set (see Example~\ref{ex_two_counter_examples}). \label{FIG_affine_CG_on_4_points}}
\end{figure}
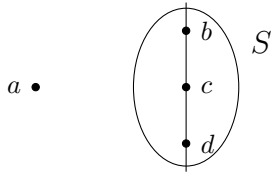

For compositions (or compounds) of hypergraphs, `committees' play a role akin to our shrinkable sets.  
According to \citet{Chein_Habib_Maurer1981}, a \textsl{committee} in a hypergraph $(X,\GGG)$  (where by definition $\GGG \sbs 2^X$) is any subset $S$ of $X$ satisfying a property similar to \textsf{(S2)} in Theorem~\ref{THM_characterizing_shrinkable_c}, where the antecedent is $G \cap S \neq \es$ and $H \cap S \neq \es$.

\cite{Cantone_Giarlotta_Watson2020+} characterize shrinkable sets in the more general setting of choice spaces.
In view of Koshevoy's result (Theorem~\ref{THM_intriguing}) and a simplification due to the path independence of the associated choice space, this directly yields an alternative test for the primitiveness of a convex geometry which employs the extreme operator.

\begin{theorem}\label{THM_characterizing_shrinkable_CGW} 
Let $(Z,\GGG)$ be a convex geometry.
The following statements are equivalent for a subset $S$ of $Z$ such that $1 < \vert S \vert < \vert Z \vert$:
\begin{enumerate}[\rm(i)] 
\item $S$ is shrinkable;
\item $S$ satisfies the following properties for any $A \in 2^Z$: 
    \begin{enumerate}
      \item[\T1]  $\ex_\GGG(A) \cap S \neq \es \; \implies \; \ex_\GGG(A \cap S) \sbs \ex_\GGG(A)\,$;
      \item[\T2] $A \cap S \neq \es \; \implies \; \ex_\GGG(A) \setminus S \sbs \ex_\GGG(A \cup S)\,$;
      \item[\rm\T3] $\big(A \cap S \neq \es \; \wedge \; \ex_\GGG(A \cup S) \cap S \neq \es\big) \; \implies \; \ex_\GGG(A) \cap S \neq \es\,$. 
    \end{enumerate} 
\end{enumerate}  
Moreover, when $(Z,\GGG)$ is atomistic, \T1 implies \T3, and so Properties \T1 and \T2 characterize the shrinkability of $S$.
\end{theorem}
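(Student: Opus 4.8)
The plan is to follow the route sketched just before the statement: transport the question across Koshevoy's bijection (Theorem~\ref{THM_intriguing}) to the associated path independent choice space $(Z,\ex)$, apply the characterization of shrinkable sets for choice spaces of \cite{Cantone_Giarlotta_Watson2020+}, and then use path independence to collapse their general conditions to exactly \T1, \T2, \T3. The object that makes the transport legitimate is Lemma~\ref{LEMMA_conv_ex}: its formula $\ex_{\GGG_Z}(A)=\bigcup_{x\in\ex_{\GGG_X}(\pi(A))}\ex_{\GGG_x}(A\cap Y_x)$ says that the extreme operator of a resolution of convex geometries \emph{is} the (choice-space) resolution of the fiberwise extreme operators. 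So I would first record this identity as the bridge between the two settings.

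Using it, I would show that $S$ is shrinkable in $(Z,\GGG)$ if and only if $S$ is shrinkable in the choice space $(Z,\ex)$. In one direction, if $(Z,\GGG)=(X,\GGG_X)\boxleft(Y_x,\GGG_x)_{x\in X}$ is a nontrivial resolution with $S$ a nontrivial fiber, then by the bridge its extreme operator $\ex$ is a nontrivial choice-space resolution of $(Z,\ex)$ with the same fiber $S$. Conversely, Koshevoy's bijection sends any nontrivial choice-space resolution producing $(Z,\ex)$ to a convex-geometry resolution producing $(Z,\GGG)$ with the same fibers: apply Theorem~\ref{THM_intriguing} to the base and to each fiber, form the resolved convex geometry, and note that by the bridge its extreme operator is the resolved choice function, namely $\ex$; uniqueness in Theorem~\ref{THM_intriguing} then forces this convex geometry to be $\GGG$. (Here one also needs that resolution preserves path independence, a fact established in \cite{Cantone_Giarlotta_Watson2020+}.) Thus the two notions of shrinkability for $S$ coincide, and I may read off the choice-space characterization.

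It then remains to simplify the choice-space test to \T1, \T2, \T3. The general test of \cite{Cantone_Giarlotta_Watson2020+} carries several conditions, but for the path independent choice function $\ex$ the redundant ones fall away; concretely I would feed in the two structural properties the excerpt isolates, namely the Chernoff/heredity property of Lemma~\ref{LEMMA_alpha_and_rho_for_CG}(i) and the Aizerman property, whose conjunction is equivalent to path independence. Matching these against the list leaves precisely \T1, \T2 and \T3. As a self-contained alternative that avoids quoting the (unstated) choice-space result, one can instead prove directly that the conjunction of \T1, \T2, \T3 is equivalent to \textsf{(S1)} and \textsf{(S2)} of Theorem~\ref{THM_characterizing_shrinkable_c}, translating convex sets into extreme sets via $G=\conv(\ex(G))$ and the description of $\GGG$ in Theorem~\ref{THM_intriguing}. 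This translation is the step I expect to be the main obstacle, since it must turn the two statements \textsf{(S1)}, \textsf{(S2)}, quantified over convex sets, into \T1--\T3, quantified over all sets, and back.

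For the atomistic addendum I can give a direct, self-contained argument that \T1 implies \T3. Assume $(Z,\GGG)$ atomistic and \T1, and suppose $A\cap S\neq\es$ together with $\ex(A\cup S)\cap S\neq\es$; pick $s\in\ex(A\cup S)\cap S$. By Lemma~\ref{LEMMA_alpha_and_rho_for_CG}(i) applied to $S\sbs A\cup S$ we get $s\in\ex(S)$. If $s\in A$, the same lemma applied to $A\sbs A\cup S$ gives $s\in\ex(A)$, so $\ex(A)\cap S\neq\es$. If $s\notin A$, consider $A\cup\{s\}\sbs A\cup S$: since $s\in\ex(A\cup S)$, Lemma~\ref{LEMMA_alpha_and_rho_for_CG}(i) yields $s\in\ex(A\cup\{s\})$, so \T1 applies to $A\cup\{s\}$ and gives $\ex\big((A\cap S)\cup\{s\}\big)\sbs\ex(A\cup\{s\})$. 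Atomisticity forces $\conv(\{s\})=\{s\}$, so $\ex\big((A\cap S)\cup\{s\}\big)$ cannot reduce to $\{s\}$ (otherwise $(A\cap S)\cup\{s\}\sbs\{s\}$, impossible since $A\cap S\neq\es$ and $s\notin A$); hence it contains some $q\in A\cap S$. Then $q\in\ex(A\cup\{s\})$, and Lemma~\ref{LEMMA_alpha_and_rho_for_CG}(i) for $A\sbs A\cup\{s\}$ gives $q\in\ex(A)$, whence $\ex(A)\cap S\neq\es$. This establishes \T3 and the final clause.
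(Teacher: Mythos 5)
Your route coincides with the paper's: the paper offers no written proof of this theorem, justifying it exactly as you propose --- by transporting the question through Koshevoy's bijection (Theorem~\ref{THM_intriguing}) to the path independent choice space $(Z,\ex_\GGG)$, invoking the characterization of shrinkable sets for choice spaces from Cantone--Giarlotta--Watson, and letting path independence collapse their conditions to \T1--\T3. You correctly identify \eqref{EQ_ex_Z} of Lemma~\ref{LEMMA_conv_ex} as the bridge showing that the extreme operator of a convex-geometry resolution is the choice-space resolution of the fiberwise extreme operators, which is what makes the two notions of shrinkability agree. Your self-contained argument for the atomistic addendum (\T1 $\Rightarrow$ \T3) is correct and goes beyond what the paper records: the key step, that $\ex\bigl((A\cap S)\cup\{s\}\bigr)$ cannot equal $\{s\}$ because $\conv(\ex(B))=\conv(B)\supseteq B$ while atomisticity gives $\conv(\{s\})=\{s\}$, is sound, and the three applications of Lemma~\ref{LEMMA_alpha_and_rho_for_CG}(i) are all legitimate. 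The one caveat: the decisive matching of the external choice-space conditions against \T1--\T3 (equivalently, the direct translation between \textsf{(S1)}--\textsf{(S2)} of Theorem~\ref{THM_characterizing_shrinkable_c} and \T1--\T3, which you rightly flag as the main obstacle) is asserted rather than carried out --- but since the paper defers exactly the same step to the cited reference, this is not a deviation from the paper's own treatment.
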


We now establish the independence among the three Properties~\T1--\T3.

\begin {example}\label{ex_two_other_counter_examples}
As in Example~\ref{ex_two_counter_examples}, consider the affine convex geometry induced on four points $a$, $b$, $c$ and $d$ in the real affine plane, with $c$ between $b$ and $d$ and moreover $a$ outside the line through $b$, $c$ and $d$. 
If we let $S=\{a,b,c\}$, then $S$ does not satisfy \T1 (for $A=\{a,b,c\}$) but it satisfies \T2 and \T3.
If we now let $S=\{a,d\}$, then $S$ satisfies \T1 and thus also \T3,  but not \T2 (for $A=\{a,b,c\}$).
Thus even in affine convex geometries, \T1 and \T2 are each one independent of the other two properties.

To show that \T3 is independent of \T1 and \T2, consider the ordinal convex geometry derived from the partial order $\le$ on $Z=\{a,b,c\}$ with $a < b$, $a < c$ and no other strict comparison. Then the subset $S=\{a,b\}$ satisfies \T1 and \T2, but not \T3 (take $A=\{a,c\}$).
\end{example}


\subsection{Extreme Resolutions and Extremely Shrinkable Sets}
\label{SUBSECT_extreme_resolutions}

Here we study resolutions whose nontrivial fibers are all indexed by extreme elements of the base. 

\begin{definition}\label{DEF_extreme_resolution} 
A resolution
$$
(Z,\GGG_Z) = (X,\GGG_X) \boxleft (Y_x,\GGG_x)_{x \in X}
$$
is \textsl{extreme} when for each $x\in X$, if $x \notin \ex_{\GGG_X}(X)$ then $|Y_x|=1$. 
\end{definition}
 
As we shall see in Theorem~\ref{THM_resolution_of_convex_geometries}, any resolution of convex geometries that happens to be affine is also extreme.  
The next result uses extremeness to further clarify the link between compositions and resolutions. 

\begin{theorem}\label{THM_extreme_res_composition} 
Let $(Z,\GGG_Z)=(X,\GGG_X) \boxleft (Y_x,\GGG_x)_{x \in X}$ be a  resolution of convex geometries, and   $(Z,\CCC_Z) = (X,\GGG_X) \boxminus (Y_x,\GGG_x)_{x \in X}$ the composition with the same base and fibers.   
Then $\GGG_Z \sbs \CCC_Z$, and $\GGG_Z = \CCC_Z$ holds if and only if the resolution is extreme.
\end{theorem}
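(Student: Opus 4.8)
The plan is to proceed in two moves. First, the inclusion $\GGG_Z \sbs \CCC_Z$ is immediate from the definitions: every set satisfying Requirements~\textsf{(R1)}, \textsf{(R2)}, \textsf{(R3)} in particular satisfies \textsf{(R1)} and \textsf{(R2)}. Consequently the asserted equality $\GGG_Z = \CCC_Z$ is equivalent to the reverse inclusion $\CCC_Z \sbs \GGG_Z$, that is, to the statement that every $A \sbs Z$ satisfying \textsf{(R1)} and \textsf{(R2)} automatically satisfies \textsf{(R3)} as well. I would record this reduction at the outset and then prove the two implications of the ``if and only if'' in terms of it.

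For the direction that extremeness implies equality, I would take an arbitrary $A \in \CCC_Z$ and verify \textsf{(R3)}. Let $x \in \pi(A) \setminus \ex_{\GGG_X}(\pi(A))$. Since $\pi(A) \sbs X$ and $x$ is not extreme in $\pi(A)$, the rephrasing of Lemma~\ref{LEMMA_alpha_and_rho_for_CG}(i) (non-extremality propagates to supersets) gives $x \notin \ex_{\GGG_X}(X)$. Extremeness of the resolution then forces $|Y_x| = 1$. But $x \in \pi(A)$ means $A \cap Y_x \neq \es$, and a nonempty subset of a singleton is the whole singleton, so $Y_x \sbs A$. This is exactly \textsf{(R3)}, hence $A \in \GGG_Z$ and therefore $\CCC_Z \sbs \GGG_Z$.

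For the converse I would argue by contraposition: assuming the resolution is not extreme, I would exhibit a set in $\CCC_Z \smu \GGG_Z$. Non-extremeness yields some $x_0 \in X \smu \ex_{\GGG_X}(X)$ with $|Y_{x_0}| \geq 2$. Applying Axiom~\textsf{(G3)} to $\es \in \GGG_{x_0}$ produces a convex singleton $\{y\} \in \GGG_{x_0}$ with $y \in Y_{x_0}$, where $\{y\} \ssbs Y_{x_0}$ because $|Y_{x_0}| \geq 2$. I would then set $A := \{y\} \cup \bigcup_{x \neq x_0} Y_x$. By construction $\pi(A) = X \in \GGG_X$, so \textsf{(R1)} holds; each intersection $A \cap Y_x$ is either $\{y\}$ or the full fiber $Y_x$ (convex, since the ground set of any fiber is itself convex), so \textsf{(R2)} holds and $A \in \CCC_Z$. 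On the other hand $x_0 \in \pi(A) \smu \ex_{\GGG_X}(\pi(A)) = X \smu \ex_{\GGG_X}(X)$, yet $Y_{x_0} \nsubseteq A$ because $A \cap Y_{x_0} = \{y\} \neq Y_{x_0}$; thus \textsf{(R3)} fails and $A \notin \GGG_Z$. This contradicts $\GGG_Z = \CCC_Z$, completing the contrapositive.

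The argument is short, and the one point deserving care---the crux of why the hypothesis is phrased through $\ex_{\GGG_X}(X)$ rather than through $\ex_{\GGG_X}(\pi(A))$---is the directional use of Lemma~\ref{LEMMA_alpha_and_rho_for_CG}(i): non-extremality in a set is inherited by all supersets but not by subsets. This is precisely what lets the choice $\pi(A) = X$ in the converse simultaneously make $x_0$ non-extreme and render extremeness a condition referring only to the top set $X$. I expect no genuine obstacle beyond checking that the fiber singleton $\{y\}$ exists, which is a one-line appeal to upgradability.
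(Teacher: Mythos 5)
Your proposal is correct and follows essentially the same route as the paper: the inclusion $\GGG_Z \sbs \CCC_Z$ from the definitions, the superset-propagation of non-extremality for the forward direction, and the witness $(Z \smu Y_{x_0}) \cup G$ with $G$ a proper nonempty convex subset of the fiber for the converse (you merely instantiate $G$ as a singleton obtained via upgradability, where the paper takes any such $G$). No gaps.
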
 

\begin{proof}  
Definition~\ref{DEF_resolution_of_cg} readily yields $\GGG_Z \sbs \CCC_Z$.  
Next we show that if the resolution is extreme, then $\CCC_Z \sbs  \GGG_Z $.  
Indeed, in an extreme resolution, \textit{any} subset $A$ of $Z$ satisfies Requirement~\textsf{(R3)}, because an element $x$ that is non-extreme in $\pi(A)$ is also non-extreme in $X$, hence $Y_x$ has only one element and is contained in $A$. 

Conversely, we show that if the resolution is not extreme, then $\CCC_Z \subseteq \GGG_Z$ does not hold. 
By assumption, there exists some $x$ in $X \setminus \ex_{\GGG_x}(X)$ such that the fiber $Y_x$ contains at least two elements.  By the definition of a convex geometry,  $\GGG_{Y_x}$ contains a nonempty convex set $G$ distinct from $Y_x$.    
Now it is easy to check that $(Z \smu Y_x) \cup G$ belongs to $\CCC_Z$ but not to $\GGG_Z$.
\end{proof}

\begin{remark} \label{REM_reso_of_set_systems}
Theorem~\ref{THM_extreme_res_composition} has a direct extension to a large family of set systems, as we now explain.  
A \textsl{set system} is a pair $(X,\FFF)$, where $X$ is a nonempty set, and $\FFF$ is a nonempty collection of subsets of $X$. 
A set system $(X,\FFF)$ is \textsl{simple} if $\bigcup\FFF=X$; in particular, $(X,\FFF)$ is \textsl{plain} if it is simple and moreover $\FFF\neq\{\es,X\}$ whenever $|X|>1$.  

\textsl{Compositions of set systems} are defined exactly as compositions of convex geometries (Definition~\ref{DEF_resolution_of_cg}).  
To define \textsl{resolutions of set systems}, we only need a notion of extreme element in set systems, and then again copy from Definition~\ref{DEF_resolution_of_cg}.  
Lemma~\ref{LEMMA_chrz_extreme_elements} suggests the following definition: given a set system $(X,\FFF)$, an element $x$ in a subset $A$ of $X$ is \textsl{extreme in} $A$ when there exists some $F \in \FFF$ such that $A \setminus \{x\} \sbs F$ and $x \notin F$ \citep[compare with, for instance,][]{Ando2006}.

Now consider a composition 
$(Z,\FFF_Z)=(X,\FFF_X) \boxminus (Y_x,\FFF_x)_{x \in X}$ and a  resolution  $(Z,\CCC_Z) = (X,\FFF_X) \boxleft (Y_x,\FFF_x)_{x \in X}$ of set systems (with the same base and fibers).  As for convex geometries (Theorem~\ref{THM_extreme_res_composition}), the resolution $\FFF_Z$ is a subcollection of the composition $\CCC_Z$.  
For plain set systems, the arguments in the proof of Theorem~\ref{THM_extreme_res_composition} show that the equality $\FFF_Z=\CCC_Z$ occurs exactly when the following property is satisfied: for any nontrivial fiber $Y_x$ and convex set $F$ in $\FFF_X$ containing $x$, there holds $x \in \ex_{\FFF_X}(F)$.
(Observe that if $X \in \FFF_X$, it suffices to require this property to hold for $F=X$, because an element $x$ that is extreme in $X$ is also extreme in any subset of $X$ containing $x$.) 
\end{remark}

To recognize which convex geometries can be written as a nontrivial extreme resolution, we introduce and characterize a variant of shrinkability.

\begin{definition}\label{DEF_extremely_shrinkable} 
Let $(Z,\GGG_Z)$ be a convex geometry.  
A subset $S$ of $Z$ with at least two elements is \textsl{extremely shrinkable} (in $Z$) if it is a fiber in some nontrivial extreme resolution producing $(Z,\GGG_Z)$. 
Whenever $Z$ contains such a set, we say that $(Z,\GGG_Z)$ is \textsl{extremely resolvable}.
\end{definition}

\begin{theorem}\label{THM_characterizing_extreme_shrinkable} 
Let $(Z,\GGG_Z)$ be a convex geometry.  The following statements are equivalent for a shrinkable set $S \subseteq Z$:  
\begin{enumerate}[\rm(i)] 
\item $S$ is extremely shrinkable; 
\item for any resolution $(X,\GGG_X) \boxleft (Y_x,\GGG_x)_{x \in X}$  equal to $(Z,\GGG_Z)$, if $S$ coincides with a fiber $Y_x$, then $x \in \ex_{\GGG_X}(X)$; 
\item for at least one resolution $(X,\GGG_X) \boxleft (Y_x,\GGG_x)_{x \in X}$ equal to $(Z,\GGG_Z)$, if $S$ coincides with a fiber $Y_x$, then $x \in \ex_{\GGG_X}(X)$;
\item $Z \setminus S \in \GGG_Z$.
\end{enumerate}
\end{theorem}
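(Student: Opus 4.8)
The plan is to prove the cycle of implications (i)$\implies$(ii)$\implies$(iii)$\implies$(iv)$\implies$(i), since (ii)$\implies$(iii) is immediate (one instance follows from all instances, given that $S$ is shrinkable so at least one such resolution exists). The conceptual heart of the statement is the equivalence between the \emph{extremeness} of the fiber index (conditions (ii),(iii)) and the purely internal condition $Z \setminus S \in \GGG_Z$ (condition (iv)), so the real work lies in connecting the resolution language to this single membership test.

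First I would prove (i)$\implies$(ii). Suppose $S$ is extremely shrinkable, so by Definition~\ref{DEF_extremely_shrinkable} there is \emph{some} nontrivial extreme resolution producing $(Z,\GGG_Z)$ in which $S$ is a fiber $Y_x$; by Definition~\ref{DEF_extreme_resolution}, since $|Y_x| = |S| \geq 2$, we must have $x \in \ex_{\GGG_X}(X)$ in that resolution. But (ii) asks for \emph{every} resolution, so I must argue that the extremeness of the index $x$ is forced independently of the chosen decomposition. Here I would appeal to the characterization \eqref{EQ_equivalence2} in Remark~\ref{REM_trivia_on_resolutions_cg}: for any resolution equal to $(Z,\GGG_Z)$ in which $S = Y_x$, we have $Z \setminus Y_x \in \GGG_Z$ if and only if $x \in \ex_{\GGG_X}(X)$. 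This suggests the cleaner route is to establish (iv) first and then feed it back through \eqref{EQ_equivalence2} to get the index-extremeness in every decomposition at once. Concretely, once I know $Z \setminus S \in \GGG_Z$, the equivalence \eqref{EQ_equivalence2} applied to any resolution with $S = Y_x$ yields $x \in \ex_{\GGG_X}(X)$, giving both (ii) and (iii) simultaneously.

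So the backbone I would actually build is: (i)$\implies$(iv), then (iv)$\implies$(ii) and (iv)$\implies$(iii) via \eqref{EQ_equivalence2}, and finally (iii)$\implies$(i) to close the loop (with (ii)$\implies$(iii) trivial). For (i)$\implies$(iv): if $S$ is a fiber $Y_x$ in an extreme resolution with $|S|\geq 2$, then $x\in\ex_{\GGG_X}(X)$ by extremeness, whence $Z \setminus S = Z \setminus Y_x \in \GGG_Z$ by \eqref{EQ_equivalence2}. For (iii)$\implies$(i): I am given one resolution equal to $(Z,\GGG_Z)$ in which $S = Y_x$ with $x \in \ex_{\GGG_X}(X)$; since $S$ is already shrinkable, this resolution is nontrivial, and I must upgrade it to an \emph{extreme} resolution. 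The natural construction is to keep the same base $(X,\GGG_X)$ and the same fiber $(Y_x,\GGG_x) = (S,\GGG_S)$, while replacing every other fiber $(Y_{x'},\GGG_{x'})$ by a trivial one (a single point), collapsing all non-$S$ fibers to singletons exactly as in the (ii)$\implies$(i) construction of Theorem~\ref{THM_characterizing_shrinkable_c}. Since $x$ remains extreme and all other fibers are now singletons, Definition~\ref{DEF_extreme_resolution} is satisfied; I then verify this reconstructed resolution still yields $(Z,\GGG_Z)$, which follows because $S = Y_x$ is shrinkable and the singleton fibers impose no Requirement~\textsf{(R3)} constraints.

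The main obstacle I anticipate is the verification in (iii)$\implies$(i) that collapsing the non-$S$ fibers to singletons genuinely reproduces the same convex geometry $\GGG_Z$ rather than a strictly larger one: I need that no set satisfying \textsf{(R1)}--\textsf{(R3)} with respect to the modified (singleton) fibers escapes $\GGG_Z$. This is where I would lean on the shrinkability of $S$ through Theorem~\ref{THM_characterizing_shrinkable_c}, mirroring the explicit transversal-style argument used there. A secondary subtlety is making sure the quantifier gap between (ii) and (iii) is handled correctly — that the \emph{existence} of a decomposition witnessing index-extremeness (iii) suffices to conclude (iv), which is why routing everything through the membership condition \eqref{EQ_equivalence2} is advantageous: it converts the decomposition-dependent notion into a decomposition-free one, after which the universal statement (ii) is free.
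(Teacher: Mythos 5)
Your proposal is correct and follows essentially the same route as the paper: both hinge on the equivalence \eqref{EQ_equivalence2} to translate index-extremeness into the decomposition-free condition $Z \setminus S \in \GGG_Z$, and both close the loop by invoking the canonical resolution from the proof of Theorem~\ref{THM_characterizing_shrinkable_c} in which $S$ is the unique nontrivial fiber. One small caution: in your (iii)$\implies$(i) step you cannot literally ``keep the same base'' while replacing the other fibers by single points (that changes the ground set $Z$); the correct move, which you also describe, is to pass to the canonical resolution of Theorem~\ref{THM_characterizing_shrinkable_c} and then use \eqref{EQ_equivalence2} --- that is, condition (iv) --- to conclude that the index of $S$ is extreme in that \emph{new} base, since extremeness of the index is not automatically transferred from one decomposition to another.
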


\begin{proof} 
We prove (i)$\implies$(iv)$\implies$(ii)$\implies$(i), and leave the (simple)  proof of (ii)$\implies$(iii)$\implies$(iv) to the reader. 
Let $T:=Z \setminus S$, where $S$ is shrinkable in $Z$.  

(i)$\implies$(iv).
If $S$ is extremely shrinkable in $(Z,\GGG_Z)$, then by definition there is an extreme resolution $(X,\GGG_X) \boxleft (Y_x,\GGG_x)_{x \in X}$ equal to $(Z,\GGG_Z)$ such that $S=Y_x$ for some $x \in X$.  Since $|S| \ge 2$, the hypothesis yields $x \in \ex_{\GGG_x}(X)$.  We claim that $T \in \GGG_Z$.
Requirement \textsf{(R1)} in Definition~\ref{DEF_resolution_of_cg} holds for $T$ by the equivalence~\eqref{EQ_equivalence2} in Remark~\ref{REM_trivia_on_resolutions_cg}.  Next, as $T \cap Y_x = \es$ and $T \cap Y_{x'} = Y_{x'}$ for $x' \in X\setminus\{x\}$, we derive that \textsf{(R2)} holds for $T$ as well.  
Since $T$ also satisfies \textsf{(R3)}, the implication (i)$\implies$(iv) is fully proved.  

(iv)$\implies$(ii).  Suppose $T \in \GGG_Z$.  In the resolution considered in (ii), the assumption implies $\pi(T) = X \setminus \{x\} \in \GGG_X$.  It follows that $x \in \ex_{\GGG_x}(X)$.

(ii)$\implies$(i).  Suppose (ii) holds.  As $S$ is assumed to be shrinkable, there exists (as in the proof of Theorem~\ref{THM_characterizing_shrinkable_c}) a resolution in which one fiber equals $S$ and all the other fibers have size $1$.  
By (ii), the projection of $S$ is an extreme element of the base.  
As a consequence, the resolution is extreme. 
\end{proof}

A characterization of shrinkable subsets in terms of convex sets  appeared in Theorems~\ref{THM_characterizing_shrinkable_c}.   We derive a simpler characterization for extremely shrinkable sets.

\begin{theorem}\label{THM_characterizing_shrinkable_V_1_2} 
Let $(Z,\GGG_Z)$ be a convex geometry.
The following statements are equivalent for a subset $S$ of $Z$ such that $1 < \vert S \vert < \vert Z \vert$:
\begin{enumerate}[\rm(i)] 
\item $S$ is extremely shrinkable;
\item $S$ satisfies the following properties for any $G,H \in \GGG_Z$:
    \begin{enumerate} 
        \item[\V1] $|G \cap S|=1 \; \implies \; G \cup S \in \GGG_Z$;
        \item[\V2] $(Z\setminus S) \cup (H \cap S) \in \GGG_Z$.  
    \end{enumerate}
\end{enumerate}
\end{theorem}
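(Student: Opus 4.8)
The plan is to prove the two implications separately, leaning on the already-established characterizations of (extreme) shrinkability rather than re-constructing a resolution from scratch. Throughout, the decisive structural fact will be Theorem~\ref{THM_extreme_res_composition}, which collapses \textsf{(R3)} away in the extreme case, together with the intersection identity noted below.

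For the direction (i)$\implies$(ii), I would start from an extreme resolution $(X,\GGG_X) \boxleft (Y_x,\GGG_x)_{x \in X}$ equal to $(Z,\GGG_Z)$ in which $S = Y_s$. Since $|S| \ge 2$, the definition of an extreme resolution forces $s \in \ex_{\GGG_X}(X)$, and hence $X \setminus \{s\} \in \GGG_X$ by Lemma~\ref{LEMMA_extreme_for_convex}(i). By Theorem~\ref{THM_extreme_res_composition} extremeness gives $\GGG_Z = \CCC_Z$, so membership in $\GGG_Z$ only has to be tested against Requirements~\textsf{(R1)} and \textsf{(R2)}. For \textsf{(V1)}, a set $G \in \GGG_Z$ with $|G \cap S| = 1$ has $s \in \pi(G)$, whence $\pi(G \cup S) = \pi(G) \in \GGG_X$, while the fiber intersections of $G \cup S$ are $Y_s$ on the index $s$ and $G \cap Y_x$ elsewhere; thus $G \cup S$ passes \textsf{(R1)}–\textsf{(R2)} and lies in $\GGG_Z$. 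For \textsf{(V2)}, the projection of $(Z \setminus S) \cup (H \cap S)$ is either $X \setminus \{s\}$ or $X$—both convex, the former precisely because $s \in \ex_{\GGG_X}(X)$—and its fiber intersections are full fibers off $s$ and $H \cap S \in \GGG_s$ on $s$, so it too satisfies \textsf{(R1)}–\textsf{(R2)}.

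For the converse (ii)$\implies$(i), I would first extract $Z \setminus S \in \GGG_Z$ by applying \textsf{(V2)} with $H = \es$. This single fact does most of the work: combined with closure under intersection \textsf{(G2)}, it gives $G \setminus S = G \cap (Z \setminus S) \in \GGG_Z$ for every $G \in \GGG_Z$, which is exactly Property~\textsf{(S1)} of Theorem~\ref{THM_characterizing_shrinkable_c}. To obtain \textsf{(S2)} it suffices, by Remark~\ref{rmk_G_cap_S_1}, to treat the case $|G \cap S| = 1$; there \textsf{(V1)} yields $G \cup S \in \GGG_Z$, \textsf{(V2)} yields $(Z \setminus S) \cup (H \cap S) \in \GGG_Z$, and the elementary set identity
\[
\big[(Z \setminus S) \cup (H \cap S)\big] \cap \big[G \cup S\big] \;=\; (G \setminus S) \cup (H \cap S)
\]
together with \textsf{(G2)} delivers the convex set required by \textsf{(S2)}. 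Hence $S$ is shrinkable, and since we have also shown $Z \setminus S \in \GGG_Z$, Theorem~\ref{THM_characterizing_extreme_shrinkable} (implication (iv)$\implies$(i)) upgrades shrinkability to extreme shrinkability.

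I expect the only delicate point to be the bookkeeping in the forward direction—keeping straight which fibers are met, contained, or missed by $G \cup S$ and by $(Z\setminus S)\cup(H\cap S)$—but invoking Theorem~\ref{THM_extreme_res_composition} removes the genuinely awkward obligation of checking \textsf{(R3)}. The conceptual heart of the argument is the observation that \textsf{(V1)} and \textsf{(V2)} produce exactly the two convex "factors" whose intersection is the set demanded by \textsf{(S2)}, so that, on top of Theorems~\ref{THM_characterizing_shrinkable_c} and~\ref{THM_characterizing_extreme_shrinkable} and Remark~\ref{rmk_G_cap_S_1}, shrinkability reduces to this one-line intersection computation.
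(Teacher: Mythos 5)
Your proof is correct. The backward direction (ii)$\implies$(i) is essentially identical to the paper's: extract $Z\setminus S\in\GGG_Z$ from \textsf{(V2)} with $H=\es$, get \textsf{(S1)} by intersecting with $Z\setminus S$, reduce \textsf{(S2)} to $|G\cap S|=1$ via Remark~\ref{rmk_G_cap_S_1}, and use the same intersection identity $(G\setminus S)\cup(H\cap S)=(G\cup S)\cap\bigl((Z\setminus S)\cup(H\cap S)\bigr)$ before invoking Theorem~\ref{THM_characterizing_extreme_shrinkable}. The forward direction, however, takes a genuinely different route. The paper stays entirely at the level of the abstract characterizations: it gets $Z\setminus S\in\GGG_Z$ from Theorem~\ref{THM_characterizing_extreme_shrinkable} and then derives \textsf{(V1)} and \textsf{(V2)} as the instantiations $H:=Z$ and $G:=Z$ of Property~\textsf{(S2)} in Theorem~\ref{THM_characterizing_shrinkable_c} --- two lines, no resolution in sight. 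You instead return to the resolution witnessing extreme shrinkability, use $|Y_s|\ge 2$ to place $s$ in $\ex_{\GGG_X}(X)$ (hence $X\setminus\{s\}\in\GGG_X$), and verify \textsf{(R1)}--\textsf{(R2)} for $G\cup S$ and $(Z\setminus S)\cup(H\cap S)$ directly, with Theorem~\ref{THM_extreme_res_composition} absorbing the obligation to check \textsf{(R3)}; the fiber bookkeeping you flag does go through (the only nontrivial points are that $Y_s\in\GGG_s$ and that $H\cap S\in\GGG_s$ by \textsf{(R2)} applied to $H$). Your version is more concrete and shows explicitly \emph{why} the two properties hold in an extreme resolution, at the cost of re-deriving facts that Theorems~\ref{THM_characterizing_shrinkable_c} and~\ref{THM_characterizing_extreme_shrinkable} already package; the paper's version is shorter but leans more heavily on those earlier results and on the observation that \textsf{(V1)} and \textsf{(V2)} are literally specializations of \textsf{(S2)}.
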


\begin{proof}
(i)$\implies$(ii).  Assume $S$ is extremely shrinkable.   
To prove \textsf{(V1)}, suppose $G \in \GGG_Z$ and $|G \cap S|=1$.   By Theorem~\ref{THM_characterizing_extreme_shrinkable}, the hypothesis entails $Z \setminus S \in \GGG_Z$, whence $G \setminus S = G \cap (Z \setminus S)\in \GGG_Z$.  
Since $S$ is shrinkable, Property \textsf{(S2)} from Theorem~\ref{THM_characterizing_shrinkable_c} yields  (taking $H:=Z$)  $G \cup S \in \GGG_Z$. 
To prove \textsf{(V2)}, let $H \in \GGG_Z$.  Then, for $G:=Z$ in \textsf{(S2)}, we get $(Z \setminus S) \cup (H \cap S) \in \GGG_Z$, as desired. 

\smallskip

(ii)$\implies$(i).  Assume \textsf{(V1)} and \textsf{(V2)} hold. 
We first derive the shrinkability of $S$ by establishing Properties~\textsf{(S1)} and \textsf{(S2)} in Theorem~\ref{THM_characterizing_shrinkable_c}. 
Property~\textsf{(S1)} holds because $G\setminus S = G \cap (Z \setminus S) \in\GGG_Z$ follows from \textsf{(V2)} with $H = \es$. 
To prove \textsf{(S2)}, let $G,H \in \GGG_Z$ be such that $G \cap S \neq \es$ and $G \setminus S \in \GGG_Z$.  
In view of Remark~\ref{rmk_G_cap_S_1}, we can assume $|G \cap S|=1$.  
Using
$$
(G \setminus S) \cup (H \cap S) \;=\; 
     (G \cup S) \cap \big((Z\setminus S) \cup (H \cap S)\big),
$$
we derive from both \textsf{(V1)} and \textsf{(V2)} that the latter set lies in $\GGG_Z$.  This proves that $S$ is shrinkable. 
Finally, observe that $S$ is also extremely shrinkable, because $Z\setminus S \in \GGG_Z$ follows from \textsf{(V2)} with $H:=\es$, hence we can make use of Theorem~\ref{THM_characterizing_extreme_shrinkable}. 
\end{proof}

\begin{remark}
In case an extremely shrinkable set $S$ is convex in $\GGG_Z$, Property~\textsf{(V2)} becomes equivalent to the following one:
\begin{itemize}
	\item[] \textsf{(V2')\;\;} $\big(H \in \GGG_Z \, \wedge \; H' \sbs S\big) \; \Longrightarrow \; (Z\setminus S) \cup H'\in \GGG_Z$.    
\end{itemize}
The reason is that $\{H \cap S \st H\in\GGG_Z\} = \{H' \in \GGG_Z \st H' \sbs S\}$ whenever $S \in \GGG_Z$.
\end{remark}


\section{Resolutions of Special Convex Geometries} \label{SECT:special types of cg}

Here we examine resolutions of the two classes of convex geometries introduced in Section~\ref{SUBSECT_deff_and_examples}: ordinal and affine.  
We start with the affine case.


\subsection{Affine Convex Geometries}
\label{SUBSECT_affine cg}

Recall from Example~\ref{EX_affine_CG} that a convex geometry is affine if and only if it is isomorphic to a convex geometry induced on a finite subset of a real affine space.

\begin{lemma}\label{LEMMA_affine_are_hered}
If a resolution is an affine convex geometry, then the base and the fibers of the resolution are also affine convex geometries.
\end{lemma}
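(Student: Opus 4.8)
The plan is to reduce the entire statement to one heredity fact: \emph{every subgeometry of an affine convex geometry is again affine}. Once this is in hand, both conclusions follow instantly from facts already recorded in Remark~\ref{REM_trivia_on_resolutions_cg}. Indeed, each fiber $(Y_x,\GGG_x)$ is a subgeometry of $(Z,\GGG_Z)$, so if the resolution is affine, every fiber is affine. For the base, Remark~\ref{REM_trivia_on_resolutions_cg} tells us that $(X,\GGG_X)$ is isomorphic to the subgeometry induced by $(Z,\GGG_Z)$ on any transversal $S \subseteq Z$; since affineness is by definition preserved under isomorphism (Example~\ref{EX_affine_CG}), this subgeometry, and hence the base, is affine.

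To establish the heredity fact, I would fix an affine realization of $(Z,\GGG_Z)$ and argue directly. Identifying $Z$ with a finite point set in some $\R^d$, Example~\ref{EX_affine_CG} gives $\GGG_Z = \{Z \cap C : C \text{ convex in } \R^d\}$. For a nonempty $W \subseteq Z$, the induced subgeometry is $\HHH = \{W \cap G : G \in \GGG_Z\}$, and the key computation is that for $G = Z \cap C$ with $W \subseteq Z$ one has $W \cap G = W \cap (Z \cap C) = W \cap C$. Consequently $\HHH$ coincides exactly with $\{W \cap C : C \text{ convex in } \R^d\}$, which is precisely the geometry induced on the subset $W$ of $\R^d$. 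Thus $(W,\HHH)$ is affinely embedded, hence affine.

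There is no serious obstacle in this argument; the only point demanding care is the compatibility identity $W \cap G = W \cap C$, which guarantees that intersecting with members of $\GGG_Z$ yields \emph{all} affine convex subsets of $W$ and introduces nothing spurious. The conceptual step that drives everything is the recognition that both the base and the fibers arise (up to isomorphism) as subgeometries of the resolution, so a single heredity lemma for affineness under subgeometries suffices; after that, the two halves of the statement are immediate applications of Remark~\ref{REM_trivia_on_resolutions_cg}.
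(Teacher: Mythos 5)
Your proof is correct and follows essentially the same route as the paper, which likewise invokes Remark~\ref{REM_trivia_on_resolutions_cg} to observe that the fibers are subgeometries of the resolution and the base is isomorphic to the subgeometry induced on a transversal. The only difference is that you explicitly verify the heredity of affineness under passing to subgeometries (via the identity $W \cap (Z \cap C) = W \cap C$), a step the paper leaves implicit.
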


\begin{proof}
The result follows at once from Remark~\ref{REM_trivia_on_resolutions_cg}: the fibers are subgeometries of the resolution, and the base is isomorphic to a subgeometry of the resolution.
\end{proof}

The converse of Lemma~\ref{LEMMA_affine_are_hered} does not hold:  Examples~\ref{EX_affinity_is_unstable_under_resolutions} and \ref{EX_affinity_is_unstable_under_affinity_is_unstable_under_resolutions_BIS} below show that resolutions of affine convex geometries need not be affine.

\begin{example}\label{EX_affinity_is_unstable_under_resolutions}
	Let
 	$$
 	\begin{array}{lllll}
 		X = \{1,2,3\}\,,& \GGG_X = 2^X \setminus \{\{1,3\}\}\,, \\
 		Y_1 = \{a\} \,,& \GGG_1 = 2^{Y_1}\,, \\
 		Y_2 = \{b,c\} \,,& \GGG_2 = 2^{Y_2}\,, \\
 		Y_3 = \{d\} \,,& \GGG_3 = 2^{Y_3}\,. \\ 
 	\end{array}
 	$$
 	All pairs $(X,\GGG_X)$ and $(Y_i,\GGG_i)$, for $i = 1,2,3$, are affine convex geometries. 
 	A simple computation shows that the resolution of $(X,\GGG_X)$ into $\{(Y_i,\GGG_i) \st i \in X\}$ is the convex geometry $(Z,\GGG_{Z}) = (X,\GGG_{X})\boxleft (Y_{i},\GGG_{i})_{i \in X}$, where $Z = \{a,b,c,d\}$, and 
	$$
	\GGG_Z = 2^Z \setminus \big\{ \{a,d\},\{a,b,d\},\{a,c,d\}\big\}.
	$$
(See Figure~\ref{FIG:non_repres}, which describes the lattice $(\GGG_Z,\subseteq)$ of convex sets.) 
However, the convex geometry $(Z,\GGG_Z)$ is not affine. 
This readily follows from the fact that $b,c\in\conv_{\GGG_Z}(\{a,d\})$, but $b\notin\conv_{\GGG_Z}(\{a,c\})$ and $c\notin\conv_{\GGG_Z}(\{a,b\}$: indeed, if $(Z,\GGG_Z)$ were affinely embedded in some real affine space, we would have $b$ and $c$ on the segment $[a,d]$, and so either $b\in[a,c]$ or $c\in[a,b]$.
\end{example} 
 
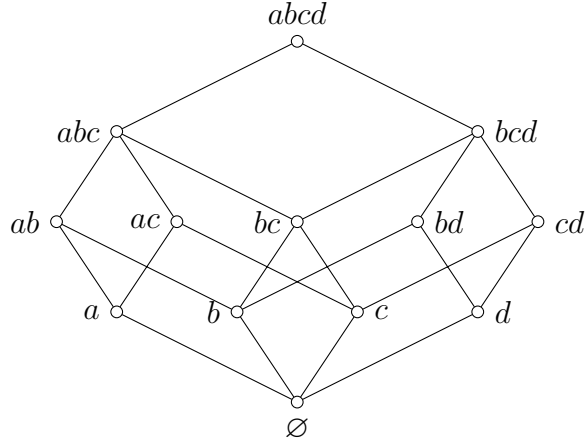
\begin{figure}[ht]
\begin{center}
\begin{tikzpicture}[xscale=0.8,yscale=1.2] 
\tikzstyle{sta}=[circle,draw,fill=black,scale=0.4]
\tikzstyle{irr}=[circle,draw,fill=white,scale=0.4] 
 
\node[irr,label=below:$\es$] (es) at (0,0) {}; 
\node[irr,label=left:$a$] (a) at (-3,1) {};
\node[irr,label=left:$b$] (b) at (-1,1) {};
\node[irr,label=right:$c$] (c) at (1,1) {};
\node[irr,label=right:$d$] (d) at (3,1) {};
        
\node[irr,label=left:$ab$] (ab) at (-4,2) {}; 
\node[irr,label=left:$ac$] (ac) at (-2,2) {};
\node[irr,label=left:$bc$] (bc) at ( 0,2) {};
\node[irr,label=right:$bd$] (bd) at ( 2,2) {};
\node[irr,label=right:$cd$] (cd) at ( 4,2) {};

\node[irr,label=left:$abc$] (abc) at (-3,3) {};
\node[irr,label=right:$bcd$] (bcd) at ( 3,3) {};

\node[irr,label=above:$abcd$] (all) at (0,4) {};  

\draw (es) -- (a) -- (ab) -- (abc) -- (all)
      (es) -- (b) -- (ab)
      (a) -- (ac) -- (abc) 
      (b) -- (bc) -- (abc)
      (b) -- (bd) -- (bcd) -- (all)
      (bc) -- (bcd)
      (es) -- (c) -- (cd) -- (bcd)
      (c) -- (ac)
      (c) -- (bc)
      (es) -- (d) -- (bd)
      (d) -- (cd);
\end{tikzpicture}
\end{center}
\caption{The lattice of convex sets of the (non-affine) resolution in Example~\ref{EX_affinity_is_unstable_under_resolutions}.
\label{FIG:non_repres}}
\end{figure}

Observe that the non-affine resolution $(Z,\GGG_Z)$ of Example~\ref{EX_affinity_is_unstable_under_resolutions} is also non-extreme, since $2 \notin \ex_{\GGG_X}(X)$ and yet $\vert Y_2 \vert >1$.  In fact, to conclude that $(Z,G_Z)$ is non-affine, it suffices to check that it is non-extreme, as the next result guarantees. 

\begin{theorem}\label{THM_resolution_of_convex_geometries}
An affine resolution of convex geometries is extreme. 
\end{theorem}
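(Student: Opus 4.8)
The plan is to argue by contradiction, reproducing in arbitrary dimension the betweenness obstruction of Example~\ref{EX_affinity_is_unstable_under_resolutions}. Assume the resolution $(Z,\GGG_Z)=(X,\GGG_X)\boxleft(Y_x,\GGG_x)_{x\in X}$ is affine and fix an embedding $Z\subseteq\R^d$ with $\conv_{\GGG_Z}(\cdot)=Z\cap\conv_\R(\cdot)$. If the resolution were not extreme, some $x\in X$ would satisfy $x\notin\ex_{\GGG_X}(X)$ while $|Y_x|\ge 2$; fix two distinct points $p,q\in Y_x$. The goal is to produce an affinely independent set $A''\subseteq Z\setminus Y_x$ such that $p,q\in\conv_\R(A'')$ and yet, for each vertex $a$ of the simplex $\conv_\R(A'')$, the point $q$ avoids $\conv_\R((A''\setminus\{a\})\cup\{p\})$; an elementary simplex computation then yields a contradiction.

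First I would manufacture a controlled affine copy of the base. Choose a transversal $A_0$ of the fibers $\{Y_y:y\neq x\}$. By Remark~\ref{REM_trivia_on_resolutions_cg}, both $A_0\cup\{p\}$ and $A_0\cup\{q\}$ induce subgeometries isomorphic to $(X,\GGG_X)$, with $p$ (resp.\ $q$) in the role of $x$. Reading the hypothesis $x\in\conv_{\GGG_X}(X\setminus\{x\})$ through these two isomorphisms gives $p\in\conv_\R(A_0)$ and $q\in\conv_\R(A_0)$. Next take $A''\subseteq A_0$ \emph{minimal} with $p\in\conv_\R(A'')$: minimality forces $A''$ to be affinely independent and places $p$ in the relative interior, so $p=\sum_i\lambda_i a_i$ with all $\lambda_i>0$. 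Setting $R'':=\pi(A'')$ (and $r_i:=\pi(a_i)$), the relation $p\in\conv_\R(A'')$ is exactly the combinatorial statement $x\in\conv_{\GGG_X}(R'')$, which therefore also holds in the copy $A_0\cup\{q\}$ and yields $q\in\conv_\R(A'')$. Hence $p$ and $q$ lie in the same simplex $\conv_\R(A'')$.

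Second I would record the exclusions. Since $p\in\relint\conv_\R(A'')$, for each vertex $a_i\in A''$ we have $p\notin\conv_\R(A''\setminus\{a_i\})$, i.e.\ (Lemma~\ref{LEMMA_chrz_extreme_elements}) $x\in\ex_{\GGG_X}\big((R''\setminus\{r_i\})\cup\{x\}\big)=\ex_{\GGG_X}\big(\pi(C_i)\big)$, where $C_i:=(A''\setminus\{a_i\})\cup\{p\}$. Now apply the convex-hull formula of Lemma~\ref{LEMMA_conv_ex} to $C_i$: because $x$ is \emph{extreme} in $\pi(C_i)$, the fiber $Y_x$ contributes to $\conv_{\GGG_Z}(C_i)$ only the set $\conv_{\GGG_x}(C_i\cap Y_x)=\conv_{\GGG_x}(\{p\})=\{p\}$ (singletons being convex, as affine fibers are atomistic by Lemma~\ref{LEMMA_affine_are_hered}). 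Thus $Y_x\cap\conv_{\GGG_Z}(C_i)=\{p\}$, so $q\notin\conv_{\GGG_Z}(C_i)=Z\cap\conv_\R(C_i)$, giving $q\notin\conv_\R\big((A''\setminus\{a_i\})\cup\{p\}\big)$ for every $i$.

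Finally I would close by a re-triangulation identity. Writing $q=\sum_i\kappa_i a_i$ (unique coordinates, by affine independence) and letting $i_0$ minimize $\kappa_i/\lambda_i$, one checks that $\rho:=\kappa_{i_0}/\lambda_{i_0}\le 1$ and that $q=\rho\,p+\sum_{j\neq i_0}(\kappa_j-\rho\lambda_j)\,a_j$ is a convex combination; hence $q\in\conv_\R\big((A''\setminus\{a_{i_0}\})\cup\{p\}\big)$, contradicting the exclusion above. This forces $|Y_x|=1$ whenever $x\notin\ex_{\GGG_X}(X)$, i.e.\ the resolution is extreme. I expect the delicate part to be the middle two steps: passing back and forth between the abstract geometry $\GGG_X$ and its two affine realizations $A_0\cup\{p\}$ and $A_0\cup\{q\}$, so as to place $p$ and $q$ inside a single affinely independent simplex in which $x$ remains extreme after deleting any one vertex. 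Once this alignment is secured, the matching of Lemma~\ref{LEMMA_conv_ex} with the elementary simplex identity is routine.
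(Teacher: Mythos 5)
Your proof is correct. It is, in essence, the affine specialization of the paper's argument, which proves the statement as a corollary of the more general Theorem~\ref{THM_resolution_of_convex_geometries_exchange}: \emph{any} resolution satisfying the exchange property \textsf{(EP)} is extreme. The structural skeleton is the same in both proofs: a non-extreme $x$ (the paper's $w$) with $|Y_x|\ge 2$; a minimal set over which $x$ is in the convex hull, chosen so that $x$ stays extreme after swapping itself in for any single element (your $A''$, obtained by Carath\'eodory minimality in $\R^d$; the paper's $V\sbs\ex_{\GGG_X}(X)$, obtained by abstract minimality in $\GGG_X$); a transversal of the corresponding fibers; and two applications of Lemma~\ref{LEMMA_conv_ex} to localize the convex hull computation inside the fiber $Y_x$. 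Your closing barycentric re-triangulation identity is exactly the verification of the relevant instance of \textsf{(EP)} for affine convexity, inlined rather than quoted; and your use of atomisticity of the affine fiber ($\conv_{\GGG_x}(\{p\})=\{p\}$) replaces the paper's symmetry argument, which derives $b\in\conv_{\GGG_w}(\{a\})$ and $a\in\conv_{\GGG_w}(\{b\})$ and concludes $\ex_{\GGG_w}(\{a,b\})=\es$. What your version buys is a self-contained, fully concrete proof for the affine case; what it gives up is the stronger intermediate result, which covers all resolutions with the exchange property, not only affine ones. One point worth making explicit in a final write-up: the passage from $p\in\conv_{\HHH}(A_0)$ (convex hull in the subgeometry induced on the transversal) to $p\in\conv_\R(A_0)$ uses that $\conv_{\HHH}(A)=S\cap\conv_{\GGG_Z}(A)$ for a subgeometry on $S$, which is easy but not stated in the paper.
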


Theorem~\ref{THM_resolution_of_convex_geometries} is a special case of a more general result (Theorem~\ref{THM_resolution_of_convex_geometries_exchange}), which gives a sufficient condition for a resolution to be extreme. 
This condition is the well-known \textsl{exchange property}, due to \citet{Levi1951} and \citet{Sierksma1984}, applied to a convex geometry $(Z,\GGG_Z)$: for any $A\in 2^Z$ and $p \in Z$,   
\begin{itemize}
	\item[] \!\!\textsf{(EP)} \; $p \in \conv_{\GGG_Z}(A)  \;\implies\; \conv_{\GGG_Z}(A) \;\sbs\; \bigcup_{a\in A}\; \conv_{\GGG_Z}\big((A\setminus\{a\}) \cup \{p\}\big)$.
\end{itemize}  
Theorem~\ref{THM_resolution_of_convex_geometries} follows from Theorem~\ref{THM_resolution_of_convex_geometries_exchange}, because any affine convex geometry satisfies \textsf{(EP)}.

\begin{theorem}\label{THM_resolution_of_convex_geometries_exchange}
A resolution of convex geometries which satisfies the exchange property \textnormal{\textsf{(EP)}} is extreme. 
\end{theorem}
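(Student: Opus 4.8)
The plan is to argue the contrapositive: assuming the resolution $(Z,\GGG_Z) = (X,\GGG_X) \boxleft (Y_x,\GGG_x)_{x \in X}$ is \emph{not} extreme, I will exhibit a set and a point witnessing a failure of \textsf{(EP)}. Non-extremeness (Definition~\ref{DEF_extreme_resolution}) furnishes an index $x_0 \in X \setminus \ex_{\GGG_X}(X)$ whose fiber $Y_{x_0}$ has at least two elements. Inside this fiber I first fix two witnesses: using Axioms~\textsf{(G1)} and \textsf{(G3)} for $\GGG_{x_0}$, there is a point $q_1 \in Y_{x_0}$ with $\{q_1\} \in \GGG_{x_0}$, so that $\conv_{\GGG_{x_0}}(\{q_1\}) = \{q_1\}$; I then pick any $q_2 \in Y_{x_0} \setminus \{q_1\}$. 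The whole argument hinges on the resulting fact that $q_2 \notin \conv_{\GGG_{x_0}}(\{q_1\})$.

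Next I build the test set. Since $x_0 \notin \ex_{\GGG_X}(X)$ means $x_0 \in \conv_{\GGG_X}(X \setminus \{x_0\})$, I choose a \emph{minimal} subset $B \subseteq X \setminus \{x_0\}$ with $x_0 \in \conv_{\GGG_X}(B)$, and let $A_0 := \{t_b \st b \in B\}$ be a partial transversal, one point $t_b \in Y_b$ per $b \in B$. Then $\pi(A_0) = B$, and since $x_0 \in \conv_{\GGG_X}(B) \setminus \ex_{\GGG_X}(B)$ (the latter because $\ex_{\GGG_X}(B) \sbs B \not\ni x_0$), formula~\eqref{EQ_conv_Z} of Lemma~\ref{LEMMA_conv_ex} forces the entire fiber $Y_{x_0}$ into $\conv_{\GGG_Z}(A_0)$; in particular both $q_1$ and $q_2$ lie in $\conv_{\GGG_Z}(A_0)$. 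Applying \textsf{(EP)} to $A_0$ with $p := q_1$, the point $q_2 \in \conv_{\GGG_Z}(A_0)$ must then lie in $\conv_{\GGG_Z}\!\big((A_0 \setminus \{t_b\}) \cup \{q_1\}\big)$ for some $b \in B$.

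The final step evaluates this hull through \eqref{EQ_conv_Z} to reach a contradiction. Writing $A_0' := (A_0 \setminus \{t_b\}) \cup \{q_1\}$, we have $A_0' \cap Y_{x_0} = \{q_1\}$ and $\pi(A_0') = (B \setminus \{b\}) \cup \{x_0\}$. Here the minimality of $B$ is decisive: it gives $x_0 \notin \conv_{\GGG_X}(B \setminus \{b\})$, hence $x_0 \in \ex_{\GGG_X}(\pi(A_0'))$. Consequently the $Y_{x_0}$-part of the right-hand side of \eqref{EQ_conv_Z} for $A_0'$ is the \emph{extreme} union evaluated at $x_0$, namely $\conv_{\GGG_{x_0}}(A_0' \cap Y_{x_0}) = \conv_{\GGG_{x_0}}(\{q_1\}) = \{q_1\}$, while $x_0$ contributes nothing to the non-extreme union. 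Thus $\conv_{\GGG_Z}(A_0') \cap Y_{x_0} = \{q_1\}$, which excludes $q_2$ — the desired contradiction.

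I expect the main obstacle to be precisely this last point: guaranteeing that a \emph{single} exchange turns $x_0$ from a non-extreme into an extreme index of the projection, so that Requirement~\textsf{(R3)}'s filling-in of $Y_{x_0}$ no longer occurs and \eqref{EQ_conv_Z} confines the fiber content of $\conv_{\GGG_Z}(A_0')$ to $\{q_1\}$. Choosing $B$ minimal, rather than a full transversal of $X \setminus \{x_0\}$, is exactly the device that secures this, since removing any one $b \in B$ drops $x_0$ out of the base-level convex hull. The two preparatory facts — that some fiber singleton $\{q_1\}$ is convex, and that \eqref{EQ_conv_Z} cleanly separates the extreme and non-extreme contributions — are routine once Lemma~\ref{LEMMA_conv_ex} is in hand.
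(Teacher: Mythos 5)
Your proof is correct and follows essentially the same strategy as the paper's: pick a non-extreme index with a fat fiber, take a minimal subset of the base whose hull captures that index, form a transversal, use Lemma~\ref{LEMMA_conv_ex} to pull the whole fiber into the hull, and apply \textsf{(EP)} so that minimality forces the index to become extreme after a single exchange, confining the fiber's contribution. The only (harmless) difference is the endgame: you choose $q_1$ with $\{q_1\}\in\GGG_{x_0}$ so the contradiction $q_2\in\{q_1\}$ is immediate, whereas the paper takes two arbitrary fiber points and derives $\ex_{\GGG_{w}}(\{a,b\})=\es$ by symmetry.
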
 

\begin{proof}
Let $(Z,\GGG_Z)=(X,\GGG_X) \boxleft (Y_x,\GGG_x)_{x \in X}$ be a resolution of convex geometries that satisfies the exchange property. 
Denote by $E$ the set of all extreme points of the base set $X$.    
Toward a contradiction, assume there is $w \in X\setminus E$ such that $\vert Y_{w} \vert \geqslant 2$, and let $a,b$ be two distinct elements of $Y_{w}$. 
Since $w \in X=\conv_{\GGG_X}(E)$, there is a minimal subset $V$ of $E$ such that $w \in  \conv_{\GGG_X}(V)$.  
For each $v \in V$, denote by $V_{v \leftarrow w}$ the set $(V\setminus\{v\}) \cup \{w\}$. 
By the minimality of $V$, we have    
\begin{equation} \label{EQ_help1_for_SC_for_extremality}
	w \in \ex_{\GGG_X} \!\left(V_{v \leftarrow w} \right) \text{ for all } v \in V\,.	
\end{equation}
Now let $T \subseteq Z$ be a transversal for the family $\{Y_v \st v \in V\}$, that is, $\vert T \cap Y_v \vert = 1$ for each $v$ in $V$. 
Observe that $\ex_{\GGG_X}(\pi(T)) = \ex_{\GGG_X}(V) = V$ by Lemma~\ref{LEMMA_extreme_for_convex}(i).
Therefore, Lemma~\ref{LEMMA_conv_ex} yields
$$
	\conv_{\GGG_Z}(T) \;=\; \bigcup_{v \in V}  \conv_{\GGG_v}(T \cap Y_v)   \;\cup\;  \bigcup_{x \,\in \,\conv_{\GGG_X}\!(V) \setminus V} Y_x\,,
$$
and so, since $w \in \conv_{\GGG_X}(V) \setminus V$ and $a,b \in Y_{w}$, we deduce 
\begin{equation} \label{EQ_help3_for_SC_for_extremality}
	a,b \in \conv_{\GGG_Z}(T)\,.	
\end{equation}
Since $a \in \conv_{\GGG_Z}(T)$, the exchange property entails 
$$
\conv_{\GGG_Z}(T) \; \subseteq \; \bigcup_{t \in T} \,\conv_{\GGG_Z} \!\left(T_{t \leftarrow a} \right)\,,
$$ 
where $T_{t \leftarrow a}$ stands for $(T \setminus \{t\}) \cup \{a\}$. 
Since $b \in \conv_{\GGG_Z}(T)$, it follows 
\begin{equation} \label{EQ_help2_for_SC_for_extremality} 
	b \in \conv_{\GGG_Z} \!\left( T_{u \leftarrow a} \right) \text{ for some } u \in T\,.	
\end{equation}
Let $r = \pi(u)$. 
Another application of Lemma~\ref{LEMMA_conv_ex} yields 
\begin{align*}
	\conv_{\GGG_Z} \! \left( T_{u \leftarrow a} \right) = \! \bigcup_{x \, \in \, \ex_{\GGG_X}\!(V_{r \leftarrow w})} \!\!\conv_{\GGG_x} \!\left(T_{u \leftarrow a} \cap Y_x \right) \;  \cup \!
	\bigcup_{x \, \in \, \conv_{\GGG_X}\!(V_{r \leftarrow w}) \setminus  \ex_{\GGG_X}\!(V_{r \leftarrow w})}  \!\!\! Y_x\,,
\end{align*}
whence, by \eqref{EQ_help1_for_SC_for_extremality} and \eqref{EQ_help2_for_SC_for_extremality}, we deduce
$$
b \in \conv_{\GGG_{w}}\!(T_{u \leftarrow a} \cap Y_{w}) = \conv_{\GGG_{w}} \!(\{a\})\,,
$$
which in turn implies $b \notin \ex_{\GGG_{w}}\!(\{a,b\})$.  
The roles of $a$ and $b$ being exchangeable, we also have $a \notin \ex_{\GGG_{w}}\!(\{a,b\})$.
It follows that $\ex_{\GGG_{w}}\!(\{a,b\}) = \es$, a contradiction. 
\end{proof}

\begin{corollary}\label{COR_GminusS}
Let $(Z,\GGG_Z)$ be an affine convex geometry.  For any shrinkable set $S \subseteq Z$, we have:
\begin{enumerate}[\rm (i)]
	\item $S \in \GGG_Z$;
	\item $G \setminus S \in \GGG_Z$ for all $G \in \GGG_Z$, in particular $Z \setminus S \in \GGG_Z$.  
\end{enumerate}
\end{corollary}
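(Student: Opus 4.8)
The plan is to exploit shrinkability to realize $(Z,\GGG_Z)$ as a nontrivial resolution in which $S$ itself is a fiber, and then to feed the affineness hypothesis into the machinery already developed in Sections~\ref{SECT:resolutions cg} and~\ref{SUBSECT_affine cg}. Concretely, I would first write $(Z,\GGG_Z) = (X,\GGG_X) \boxleft (Y_x,\GGG_x)_{x \in X}$ with $S = Y_s$ for some $s \in X$, as guaranteed by the definition of a shrinkable set, so that $1 < |S| < |Z|$. Since $(Z,\GGG_Z)$ is affine, Theorem~\ref{THM_resolution_of_convex_geometries} tells us that this resolution is extreme. Because $|Y_s| = |S| \geq 2$, the definition of an extreme resolution (Definition~\ref{DEF_extreme_resolution}) forces $s \in \ex_{\GGG_X}(X)$. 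This single observation is the crux: both assertions follow from it via the equivalences recorded in Remark~\ref{REM_trivia_on_resolutions_cg}.

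For part (ii), I would invoke equivalence~\eqref{EQ_equivalence2} of Remark~\ref{REM_trivia_on_resolutions_cg}: since $s \in \ex_{\GGG_X}(X)$, we obtain $Z \setminus S = Z \setminus Y_s \in \GGG_Z$. The general statement $G \setminus S \in \GGG_Z$ then follows immediately by writing $G \setminus S = G \cap (Z \setminus S)$ and appealing to closure under intersection (Axiom~\textsf{(G2)}); the special case $G = Z$ recovers $Z \setminus S \in \GGG_Z$.

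For part (i), I would route through the atomistic property, which is the genuinely affine-specific ingredient. Every affine convex geometry is atomistic, and by Remark~\ref{REM_trivia_on_resolutions_cg} a resolution is atomistic precisely when its base and all its fibers are. Hence the base $(X,\GGG_X)$ is atomistic, so $\{s\} \in \GGG_X$, and equivalence~\eqref{EQ_equivalences_for_cg} then yields $S = Y_s \in \GGG_Z$. The main point to get right here is not any computation but the logical sequencing: affineness must first be converted into extremeness of the resolution (Theorem~\ref{THM_resolution_of_convex_geometries}), after which part (ii) uses the extremeness equivalence~\eqref{EQ_equivalence2} directly, whereas part (i) requires the separate atomistic detour rather than that same equivalence.
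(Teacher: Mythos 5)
Your proposal is correct and follows essentially the same route as the paper: realize $S$ as a fiber of a resolution, use Theorem~\ref{THM_resolution_of_convex_geometries} to make that resolution extreme, deduce $Z\setminus S\in\GGG_Z$ (you via equivalence~\eqref{EQ_equivalence2}, the paper via Theorem~\ref{THM_characterizing_extreme_shrinkable}, which amounts to the same thing), and obtain $S\in\GGG_Z$ from atomisticity of the base together with equivalence~\eqref{EQ_equivalences_for_cg}. The only cosmetic difference is that for part~(i) the paper cites Lemma~\ref{LEMMA_affine_are_hered} to get that the base is affine (hence atomistic), while you route through the atomisticity clause of Remark~\ref{REM_trivia_on_resolutions_cg}; both are valid.
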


\begin{proof} 
Fix $S \subseteq Z$ shrinkable.  
Thus there is a resolution $(X,\GGG_X) \boxleft (Y_x,\GGG_x)_{x \in X}$ producing $(Z,\GGG_Z)$ in which $S$ is a (nontrivial) fiber $Y_x$.
\smallskip

(i) By Lemma~\ref{LEMMA_affine_are_hered}, the base $(X,\GGG_X)$ is an affine convex geometry, which implies that any one-element set $\{x\}$ in $2^X$ belongs to $\GGG_X$.  By \eqref{EQ_equivalences_for_cg} in Remark~\ref{REM_trivia_on_resolutions_cg}, we conclude $S = Y_x \in \GGG_Z$. 

\smallskip

(ii) To start, we prove $Z \setminus S \in \GGG_Z$.  
By  Theorem~\ref{THM_resolution_of_convex_geometries}, the resolution $(Z,\GGG_Z) = (X,\GGG_X) \boxleft (Y_x,\GGG_x)_{x \in X}$ is extreme, and so $S = Y_x$ is extremely shrinkable.
An application of Theorem~\ref{THM_characterizing_extreme_shrinkable} readily yields $Z \setminus S \in \GGG_Z$.  
Then, for arbitrary $G \in \GGG_Z$, we get $G \setminus S \in \GGG_Z$, because $G \setminus S = G \cap (Z \setminus S)$. 
\end{proof}

Finally, we obtain a characterization of all affine convex geometries that are resolvable (hence of primitive ones): 

\begin{corollary}
	The following statements are equivalent for an affine convex geometry $(Z,\GGG_Z)$:
	\begin{enumerate}[\quad \rm (1)]
		\item $(Z,\GGG_Z)$ is resolvable;
		\item there is a shrinkable subset $S$ of $Z$;
		\item there is an extremely shrinkable subset $S$ of $Z$; 
		\item there is a subset $S$ of $Z$, with $1 < \vert S \vert < \vert X \vert$, satisfying the following properties for any $G,H \in \GGG_Z$: 
		\begin{enumerate}[\phantom{zzzz}]
        	\item[\rm\textsf{(V1)}\;\;] $\big(G \in \GGG_Z \; \wedge \; |G \cap S|=1 \big)\; \Longrightarrow \; G \cup S \in \GGG_Z$,
        	\item[\V2\;\;] $H \in \GGG_Z \; \Longrightarrow \;                  (Z\setminus S) \cup (H \cap S) \in \GGG_Z$.  
		\end{enumerate} 
	\end{enumerate}
\end{corollary}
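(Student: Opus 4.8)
The plan is to prove the four conditions equivalent by closing the chain $(1)\Leftrightarrow(2)\Leftrightarrow(3)\Leftrightarrow(4)$, invoking affineness at exactly one place—the single step where it is genuinely needed.

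First I would settle $(1)\Leftrightarrow(2)$ by merely unwinding definitions. By Definition~\ref{DEF_non_trivial} a convex geometry is resolvable precisely when it fails to be primitive, and by the observation following Definition~\ref{DEF_shrinkable} it is primitive exactly when it admits no shrinkable set. Hence $(Z,\GGG_Z)$ is resolvable if and only if some $S\subseteq Z$ is shrinkable, and no appeal to affineness is made here.

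The heart of the argument is $(2)\Leftrightarrow(3)$, the only point where the hypothesis that $(Z,\GGG_Z)$ be affine enters. The direction $(3)\Rightarrow(2)$ is immediate, since an extreme resolution is in particular a resolution, so an extremely shrinkable set is \emph{a fortiori} shrinkable. For $(2)\Rightarrow(3)$, suppose $S$ is shrinkable; then $(Z,\GGG_Z)$ can be written as a nontrivial resolution $(X,\GGG_X)\boxleft(Y_x,\GGG_x)_{x\in X}$ with $S=Y_s$ for some $s\in X$. Since the resulting geometry $(Z,\GGG_Z)$ is affine, Theorem~\ref{THM_resolution_of_convex_geometries} forces this very resolution to be extreme, so $S$ is a fiber in a nontrivial extreme resolution producing $(Z,\GGG_Z)$, that is, extremely shrinkable (Definition~\ref{DEF_extremely_shrinkable}). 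This is the sole spot where affineness is used, and I expect it to be the main (indeed essentially the only) conceptual obstacle: the step hinges on reading Theorem~\ref{THM_resolution_of_convex_geometries} correctly as applying to \emph{every} resolution whose output is the affine geometry $(Z,\GGG_Z)$, not merely to some distinguished one.

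Finally, $(3)\Leftrightarrow(4)$ is essentially a restatement of Theorem~\ref{THM_characterizing_shrinkable_V_1_2}, which asserts for any $S$ with $1<|S|<|Z|$ that $S$ is extremely shrinkable if and only if it satisfies \textsf{(V1)} and \textsf{(V2)}; matching the existential quantifiers in $(3)$ and $(4)$ then yields the equivalence. The only bookkeeping is to verify the size constraint on an extremely shrinkable $S$: one has $|S|\ge 2$ by definition, while $|S|<|Z|$ because a nontrivial resolution has a base with at least two elements, forcing a second, nonempty fiber disjoint from $S$ and hence $S\subsetneq Z$. (Here the bound $|S|<|X|$ printed in statement~$(4)$ should read $|S|<|Z|$.) Assembling these three equivalences delivers the corollary.
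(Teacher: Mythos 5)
Your proposal is correct and follows essentially the same route as the paper, which simply observes that Theorem~\ref{THM_resolution_of_convex_geometries} makes every resolution producing an affine geometry extreme, so that Theorem~\ref{THM_characterizing_shrinkable_V_1_2} applies; you merely spell out the chain $(1)\Leftrightarrow(2)\Leftrightarrow(3)\Leftrightarrow(4)$ in more detail. Your remark that the bound $\vert S\vert<\vert X\vert$ in item~(4) should read $\vert S\vert<\vert Z\vert$ correctly identifies a typo in the statement.
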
 

\begin{proof}
	Simply observe that as a consequence of Theorem~\ref{THM_resolution_of_convex_geometries}, affine resolutions fall under the application of Theorem~\ref{THM_characterizing_shrinkable_V_1_2}. 	
\end{proof}

The primitivity of a convex geometry is characterized by the non-existence of a shrinkable set.  
However, even for affine convex geometries, this characterization is not computationally effective, because   Properties~\textsf{(V1)} and \textsf{(V2)} are to be checked for all convex sets of the given geometry. 
We wonder whether there are more instructive answers to the next two, related problems. 

\begin{problem}\label{PROB_1}
Given an affine convex geometry $(Z,\GGG)$, characterize when a subset of $Z$ is shrinkable.  
\end{problem}

\begin{problem}\label{PROB_2}
Geometrically characterize when an affine convex geometry is primitive.\end{problem}

By Theorems~\ref{THM_resolution_of_convex_geometries} and~\ref{THM_extreme_res_composition}, Problems~\ref{PROB_1} and \ref{PROB_2} are also problems about compositions of affine convex geometries.
Although they appear to be central problems, we were unable to find any mention of them in the literature.

The next result states an equivalent (geometric) formulation of Property~\T1 in  Theorem~\ref{THM_characterizing_shrinkable_CGW}.  
This reformulation is in the spirit of the answers we would like to obtain for Problems~\ref{PROB_1} and \ref{PROB_2}.
In what follows, `$\conv_{\GGG_Z}$' denotes the convex hull operator in a convex geometry $\GGG_Z$, whereas `$\conv_\R$' is used for the standard convex hull in the affine space $\R^d$.

\begin{proposition} \label{PROP_chrz_tracing_in_affine_geometry}
Let $Z$ be a finite subset of $\R^d$, and $(Z,\GGG)$ the convex geometry induced on $Z$.
The following statements are equivalent for any set $S \sbs Z$ such that $1 < |S| < |Z|\,$:
\begin{enumerate}[\rm (i)]
	\item $S$ satisfies Property~\T1 in Theorem~\ref{THM_characterizing_shrinkable_CGW}, namely for all $A\in2^Z$,  
	  \begin{enumerate}
      \item[\T1] $\ex(A) \cap S \neq \es \;\; \Longrightarrow \;\; \ex(A \cap S) \sbs \ex(A)$; 
      \end{enumerate}
	\item there exist proper faces $F_1, F_2, \dots, F_k$ ($k \ge 1)$ of the convex polytope $\conv_\R(Z)$ such that  $S = Z \cap (F_1 \cup F_2\cup \dots \cup F_k)$.
\end{enumerate}
\end{proposition}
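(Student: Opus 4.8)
The plan is to translate both conditions into a statement about the minimal face carrying each point, and then handle the two implications very asymmetrically. Recall (from the Remark following Example~\ref{EX_affine_CG}) that in $(Z,\GGG)$ the operator $\ex$ returns the vertices of the relevant polytope $\conv_\R(\cdot)$. For $s\in Z$ write $\phi(s)$ for the smallest face of $P:=\conv_\R(Z)$ containing $s$, so that $s\in\relint\phi(s)$ and the vertices of $\phi(s)$ lie in $Z$. First I would record the elementary reduction that (ii) is equivalent to: \emph{for every $s\in S$ one has $Z\cap\phi(s)\sbs S$}. Indeed, if $S=Z\cap(F_1\cup\dots\cup F_k)$ then each $s\in S$ lies in some $F_j$, whence $\phi(s)\sbs F_j$ and $Z\cap\phi(s)\sbs Z\cap F_j\sbs S$; conversely the faces $\{\phi(s):s\in S\}$ are automatically proper (a non-proper $\phi(s)=P$ would force $Z=Z\cap\phi(s)\sbs S$, impossible since $|S|<|Z|$) and exhibit $S$ in the form required by (ii).

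For the direction (ii)$\Rightarrow$(i) I would argue directly, and in fact establish the stronger \emph{unconditional} inclusion $\ex(A\cap S)\sbs\ex(A)$ for every $A\in 2^Z$. Take $p\in\ex(A\cap S)$; since $p\in S$ it lies in some proper face $F_j$. If $p$ failed to be extreme in $A$, then $p\in\conv_\R(A\setminus\{p\})$, and the defining property of a face (any convex combination landing in $F_j$ has all of its terms in $F_j$) would put every point occurring in that representation into $Z\cap F_j\sbs S$, hence into $(A\cap S)\setminus\{p\}$, contradicting $p\in\ex(A\cap S)$. Thus $p\in\ex(A)$, and in particular \T1 holds.

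The substance is (i)$\Rightarrow$(ii), which I would prove by contraposition: assuming some $s\in S$ admits $t\in(Z\cap\phi(s))\setminus S$, I will manufacture a set $A$ violating \T1 with witness point $p=s$. Put $F:=\phi(s)$. Since $s\in\relint F$ and $t\in F$ with $t\neq s$, the ray issuing from $t$ through $s$ leaves $F$ through a proper subface $G$ at a point $s^{\star}\neq s$; letting $U$ be the vertex set of $G$ (so $U\sbs Z$ and $\conv_\R(U)=G$), one obtains $s\in\conv_\R(U\cup\{t\})$ with the coefficient of $t$ strictly positive, while $s\notin\conv_\R(U)=G$ because $s\in\relint F$ cannot meet a proper face. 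Taking $A:=\{s\}\cup U\cup\{t\}$ then gives $s\in\conv_\R(A\setminus\{s\})$, so $s\notin\ex(A)$, whereas $A\cap S\sbs\{s\}\cup U$ together with $s\notin\conv_\R(U)$ forces $s\in\ex(A\cap S)$.

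The one hypothesis of \T1 that does \emph{not} come for free is $\ex(A)\cap S\neq\es$, i.e.\ that the witness $A$ possesses a vertex inside $S$; this is the crux. To secure it I would first apply \T1 to $A=Z$ to show that either every vertex of $\conv_\R(S)$ is a vertex of $P$, or else $S$ contains no vertex of $P$ at all. In the generic situation one can select a vertex $v^{\star}$ of $P$ lying in $S$ and adjoin it, setting $A':=A\cup\{v^{\star}\}$: since a vertex of $P$ is extreme in every subset containing it, $v^{\star}\in\ex(A')\cap S$, and if $v^{\star}\notin F$ the face property again keeps $s$ extreme in $A'\cap S$ (a representation of $s$ using $v^{\star}$ would force $v^{\star}\in F$). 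The remaining degenerate case, in which $S$ meets no vertex of $P$, I expect to dispatch separately by exposing a vertex $p_0$ of $\conv_\R(S)$ with a linear functional and rerunning the ray construction inside the slab where that functional is at least its value at $p_0$. Handling this degenerate case uniformly, and more generally pinning down $v^{\star}$ so that it neither destroys the extremality of $s$ among the $S$-points nor fails to be a vertex of $\conv_\R(A')$, is the step I anticipate will demand the most care.
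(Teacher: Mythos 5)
Your direction (ii)$\Rightarrow$(i) and your reduction of (ii) to the condition ``$Z\cap\phi(s)\sbs S$ for the minimal face $\phi(s)$ of each $s\in S$'' are correct and essentially identical to what the paper does (the paper's Claim is stated for the unique face having $s$ in its relative interior, which is exactly $\phi(s)$, and its (ii)$\Rightarrow$(i) argument is the same face-property computation, likewise yielding the unconditional inclusion $\ex(A\cap S)\sbs\ex(A)$). The ray construction producing $s\in\conv_\R(U\cup\{t\})\setminus\conv_\R(U)$ is also sound.

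The genuine gap is exactly where you flag it: securing the hypothesis $\ex(A)\cap S\neq\es$ without destroying $s\in\ex(A\cap S)\setminus\ex(A)$. Your proposed fix is incomplete in two ways. First, insisting that the witness $v^\star$ be a vertex of $P=\conv_\R(Z)$ forces the ``degenerate case'' ($S$ meets no vertex of $P$) and the case where every vertex of $P$ in $S$ lies in $F$, neither of which you resolve; in the latter case $s$ can easily lie in $\conv_\R(U\cup\{v^\star\})$ (take $F$ a triangle, $U$ the two endpoints of the exit edge, $v^\star$ the opposite vertex), so your claim that $s$ stays extreme in $A'\cap S$ fails there. Second, the appeal to \T1 with $A=Z$ does not produce the needed $v^\star$. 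The paper's resolution does not require the witness to be a vertex of $P$ at all. It splits on whether some vertex of the witnessing simplex (other than $t$) lies in $S$: if yes, that vertex is automatically in $\ex(A)\cap S$ and one is done. If no, then $A\cap S$ reduces to $\{s,v\}$ for an arbitrary second point $v\in S\setminus\{s\}$ (which exists since $|S|\ge 2$), so $s\in\ex(A\cap S)$ is free; one then makes $v$ extreme in the witness set by a further case split --- if $v$ is outside the simplex $T$ spanned by $t$ and the chosen vertices, simply adjoin it; if $v\in T$, replace $T$ by the pyramid with apex $v$ over the face of $T$ through which the ray from $v$ via $s$ exits, so that $v$ is a vertex of the new witness set while $s$ remains in its convex hull. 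Filling in this case analysis (or an equivalent one) is the substance of the implication, so as it stands your proof of (i)$\Rightarrow$(ii) is not complete.
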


A simple consequence of Condition~(ii) is that $S$ lies in the relative boundary\footnote{That is, the boundary computed in the affine subspace generated by $\conv_\R(Z)$.} of $\conv_\R(Z)$. 
However, Condition (ii) asserts more than that.

\begin{proof}
Fix $S \sbs Z$ such that $1 < \vert S \vert < \vert Z\vert$. 

\medskip

(ii) $\Longrightarrow$ (i): 
Suppose there exist proper faces $F_1$, $F_2$, \dots, $F_k$ of the convex polytope $\conv_\R(Z)$ such that  $S = Z \cap (F_1 \cup F_2\cup \dots \cup F_k)$.  
If any face $F_i$ equals $\conv_\R(Z)$, then $S=Z$, and so $S$ satisfies \T1.  
Thus we may assume that all $F_i$'s are proper faces of $\conv_\R(Z)$.  
Let $A \in 2^Z$; we shall show that $\ex(A \cap S) \sbs \ex(A)$.  
Given $w$ in $\ex(A \cap S)$, we know by (ii) that $w$ belongs to some face $F_i$ of $\conv_\R(Z)$, with moreover $Z \cap F_i \sbs S$.  
If $w \in \ex(A)$ does not hold, then there exists a subset $B$ of $A\setminus\{w\}$ such that $w \in \conv_\R(B)$.  
Such a minimal subset $B$ of $A\setminus\{w\}$ is formed by the vertices $b_1$, $b_2$, \dots, $b_\ell$ of a simplex containing $w$ in its relative interior.  
Then all $b_j$'s belong to $F_i$, because the proper face $F_i$ of $\conv_\R$ equals the intersection of $\conv_\R(Z)$ with some hyperplane supporting $\conv_\R(Z)$.  It follows that, for all $j$'s,  we have $b_j \in A \cap F_i \sbs A \cap Z \cap F_i \sbs A \cap S$, contradicting the initial assumption $w \in \ex(A \cap S)$.  

\medskip

(i) $\Longrightarrow$ (ii): Suppose $S$ satisfies Property~\T1.  

\smallskip

\noindent \textsc{Claim:} \textit{If some point $w$ of $S$ is in the relative interior of any face $F$ (proper or not) of $\conv_\R(Z)$, then $Z \cap F \sbs S$ and the face $F$ is proper.}

\medskip

\noindent \textsl{Proof of Claim.} Toward a contradiction, assume there is $f_0 \in (Z \cap F) \setminus S$.  
The line passing through $f_0$ and $w$ must meet the relative boundary of $F$ on the side of $w$ opposite to $f_0$.  
Thus there exist vertices $f_1, f_2, \dots, f_k$ of the face $F$ such that $w$ belongs to the relative interior of the simplex with vertices $f_0,f_1, \dots,f_k$.  
Notice $\{f_0,f_1,f_2,\dots,f_k\} \sbs Z$ (because all vertices of $\conv_\R(Z)$ must be in $Z$).  
We split the analysis in the only two possible cases.
\vspace{-0,2cm}
\begin{description}
	\item[\rm Case 1:] $f_i$ is in $S$, for some $i \in \{1,2,\dots, k\}$.  
	Set $A=\{w,f_0,f_1,\dots, f_k\}$, and notice $f_i\in \ex(A) \cap S$ together with $w \in \ex(A \cap S) \setminus \ex(A)$.  
	This contradicts the assumption that $S$ satisfies \T1.
	\vspace{-0,2cm}
	\item[\rm Case 2:] $\{f_0,f_1,\dots,f_k\} \sbs Z \setminus S$.
	By our assumption $|S|\ge2$, there is $v \in S \setminus \{w\}$.  
	Consider two subcases for the possible position of the point $v$.  
	First, if $v \notin \conv_\R(\{f_0,f_1, f_2, \dots,f_k\})$, then we set $A=\{v,w,f_0, f_1, f_2, \dots,f_k\}$. 
	Notice $v \in \ex(A) \cap S$ and $w \in \ex(A \cap S) \setminus \ex(A)$, again a contradiction with $S$ satisfying \T1.  
	Second, if $v \in \conv_\R(\{f_0,f_1, f_2, \dots,f_k\})$, there is a point $x$ on the relative boundary of the simplex $T=\conv_\R(\{f_0,f_1,\dots,f_k\})$ such that $w \in \;]v,x[$.  
	Let now $A$ be formed by the points $v$, $w$ and the vertices of the minimal face of the simplex $T$ which contains $x$.  
	Again we get a contradiction because $v \in \ex(A) \cap S$ and $w \in \ex(A \cap S) \setminus \ex(A)$.  
\end{description}
\vspace{-0,2cm}
To complete the proof of the Claim, simply observe that $F$ must be proper, because otherwise we would have $Z=S$. 

\medskip

From the Claim, we derive that $S$ contains the intersection of $Z$ with any face of $\conv_\R(Z)$ containing in its relative interior at least one point of $S$.
Thus $S$ includes the intersection of $Z$ with the union of all such faces.  
The reverse inclusion also holds, because any point $w$ of $S$ belongs to both $Z$ and the relative interior of the smallest face of $\conv_\R(Z)$ containing $w$. 
\end{proof}

We are still missing a translation of Property~\T2 in Theorem~\ref{THM_characterizing_shrinkable_CGW}.
This translation appears to be of some interest in view of the fact that, along with the translation of \T1 obtained in Proposition~\ref{PROP_chrz_tracing_in_affine_geometry}, it would deliver a solution to Problem~\ref{PROB_1}.

The next example illustrates another type of obstruction to the affineness of a resolution.

\begin{example}\label{EX_affinity_is_unstable_under_affinity_is_unstable_under_resolutions_BIS}
Let   
$$
\begin{array}{lllll}
		X = \{1,2,3\}\,,& \GGG_X = 2^X \setminus \{\{1,3\}\}\,, \\
		Y_1 = \{a\} \,,& \GGG_1 = 2^{Y_1}\,, \\
		Y_2 = \{b\} \,,& \GGG_2 = 2^{Y_2}\,, \\
		Y_3 = \{c,d\} \,,& \GGG_1 = 2^{Y_3}\,. \\ 
\end{array}
$$
The resolution of $(X,\GGG_X)$ into $\{(Y_i,\GGG_i) \: \st \: i \in X\}$ is the convex geometry $(Z,\GGG_{Z}) = (X,\GGG_{X})\boxleft (Y_{i},\GGG_{i})_{i \in X}$, where $Z = \{a,b,c,d\}$, and 
$$
\GGG_Z = 2^Z \setminus \big\{ \{a,c\},\{a,d\} \big\}.
$$
Although all convex geometries $(X,\GGG_X)$ and $(Y_i,\GGG_i)$, $i = 1,2,3$, are affine, their resolution $(Z,\GGG_Z)$ is not. 
Indeed, we have $b\in\conv_{\GGG_Z}(\{a,c\}) \cap \conv_{\GGG_Z}(\{a,d\})$, along with $c\notin\conv_{\GGG_Z}(\{b,d\}$ and $d\notin\conv_{\GGG_Z}(\{b,c\}$, which is impossible in any affine geometry. 
(If $(Z,\GGG_Z)$ were affinely embedded, we would have in some real affine space $c$ and $d$ on the line through $a$ and $b$, on the side of $b$ opposite to $a$. However, this implies $c\in[b,d]$ or $d\in[b,c]$.)
\end{example}

The crucial assumptions in the last example are that $2$ lies between $1$ and $3$, and that the fiber $Y_3$ contains more than one element.  We generalize them in the next proposition (where $p$ plays the role of $2$). 

\begin{proposition}\label{PROP_pt_in_reint}
Suppose a resolution $(Z,\GGG_Z)=(X,\GGG_X) \boxleft (Y_x,\GGG_x)_{x \in X}$ of convex geometries is affine, with $\GGG_Z$ the geometry induced on the subset $Z$ of some real affine space $\R^d$.
Assume that the base contains elements $p$, $p_1$, \dots, $p_{n+1}$ such that $p \in \conv_{\GGG_X}(\{p_1$, $p_2$, \dots, $p_{n+1}\})$ and $p \notin \conv_{\GGG_X}(T)$ for any proper subset $T$ of $\{p_1$, $p_2$, \dots, $p_{n+1}\}$.  For $i=1,2, \ldots, n+1$, let $q_i$ be any point in the fiber $Y_{p_i}$.
Then all fibers $Y_{p_i}$ lie in the affine subspace of dimension $n$ generated by the points $q_1$, $q_2$, \dots, $q_{n+1}$, and so all fibers $(Y_{p_i},\GGG_{p_i})$ are isomorphic to convex geometries affinely embedded in a real affine space of dimension $n$.
\end{proposition}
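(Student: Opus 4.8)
The plan is to exploit the explicit description of the convex hull in a resolution given by Lemma~\ref{LEMMA_conv_ex}, combined with the affineness identity $\conv_{\GGG_Z}(A)=Z\cap\conv_\R(A)$, in order to locate a single distinguished point of the fiber $Y_p$, and then to finish with elementary affine geometry. I would begin by extracting what the minimality of $\{p_1,\dots,p_{n+1}\}$ yields. If some $p_i$ failed to be extreme in $\{p_1,\dots,p_{n+1}\}$, then $\conv_{\GGG_X}$ of the remaining points would still contain $p$, contradicting minimality; hence $\ex_{\GGG_X}(\{p_1,\dots,p_{n+1}\})=\{p_1,\dots,p_{n+1}\}$, and (for $n\ge 1$) $p\notin\{p_1,\dots,p_{n+1}\}$, so $p$ is a \emph{non-extreme} element of $\conv_{\GGG_X}(\{p_1,\dots,p_{n+1}\})$. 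I would then fix any $b\in Y_p$ and let $H$ denote the affine subspace generated by $q_1,\dots,q_{n+1}$.

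Next I would analyse two families of transversals through Lemma~\ref{LEMMA_conv_ex}. For a \emph{full} transversal $T_y$ obtained by replacing $q_j$ with an arbitrary $y\in Y_{p_j}$ (the case $y=q_j$ giving $T:=\{q_1,\dots,q_{n+1}\}$), the projection is all of $\{p_1,\dots,p_{n+1}\}$, so $p$ lands in the second (non-extreme) union of Lemma~\ref{LEMMA_conv_ex}; thus $Y_p\sbs\conv_{\GGG_Z}(T_y)=Z\cap\conv_\R(T_y)$, and in particular $b\in\conv_\R(T_y)$. For a \emph{deficient} transversal $T':=\{q_i:i\ne j\}$, the projection $\{p_i:i\ne j\}$ is a proper subset of $\{p_1,\dots,p_{n+1}\}$, which by minimality does not contain $p$ in its $\conv_{\GGG_X}$; hence the index $p$ appears in neither union of Lemma~\ref{LEMMA_conv_ex}, so $Y_p\cap\conv_{\GGG_Z}(T')=\es$, and, since $b\in Z$, we get $b\notin\conv_\R(T')$.

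I would then cash these in. From $b\in\conv_\R(T)$ together with $b\notin\conv_\R(T')$ for every $j$, Carath\'eodory's theorem forces $q_1,\dots,q_{n+1}$ to be affinely independent: if their affine hull had dimension $<n$, then $b$ would already be a convex combination of at most $n$ of them, hence of some deficient transversal $T'$, a contradiction. Thus $H$ has dimension $n$ and $b\in H$. Finally, for arbitrary $y\in Y_{p_j}$ I would write $b$ as a convex combination of $T_y$; the coefficient of $y$ is strictly positive (otherwise $b\in\conv_\R(T')$), so solving for $y$ expresses it as an affine combination of $b$ and $\{q_i:i\ne j\}$, all of which lie in $H$. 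Hence $y\in H$, that is $Y_{p_j}\sbs H$ for every $j$. Since each $(Y_{p_j},\GGG_{p_j})$ is a subgeometry of the affine $(Z,\GGG_Z)$ by Remark~\ref{REM_trivia_on_resolutions_cg} and its points lie in the $n$-dimensional subspace $H$, it is isomorphic to a convex geometry affinely embedded in $\R^n$.

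The main obstacle is not any single deep step but the careful bookkeeping with Lemma~\ref{LEMMA_conv_ex}: one must verify that the index $p$ sits in the second (non-extreme) union for the full transversals $T_y$ and in \emph{neither} union for the deficient transversals $T'$. Both facts rest entirely on the minimality of $\{p_1,\dots,p_{n+1}\}$, which simultaneously makes every $p_i$ extreme and keeps $p$ out of the convex hull of any proper subprojection. The only other delicate point is upgrading ``$b$ lies in the affine hull of the $q_i$'' to ``the $q_i$ are affinely independent,'' for which Carath\'eodory's theorem applied to the deficient transversals is the cleanest tool.
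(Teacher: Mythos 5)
Your proof is correct and follows essentially the same route as the paper's: both arguments place a point of the fiber $Y_p$ strictly inside the simplex spanned by a transversal of the fibers $Y_{p_1},\dots,Y_{p_{n+1}}$ (the minimality of $\{p_1,\dots,p_{n+1}\}$ forcing every barycentric coefficient to be positive) and then solve affinely for each fiber point in terms of that interior point and the remaining $q_i$'s. The only difference is cosmetic: the paper first invokes Theorem~\ref{THM_resolution_of_convex_geometries} to shrink $Y_p$ to a single point $q$, whereas you work with an arbitrary $b\in Y_p$ and extract both the membership and the non-membership statements uniformly from Lemma~\ref{LEMMA_conv_ex} applied to full and deficient transversals, which slightly streamlines the bookkeeping.
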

   
\begin{proof}
As mentioned in Lemma~\ref{LEMMA_affine_are_hered}, the base $(X,\GGG_X)$ and all fibers $(Y_x,\GGG_{x})$ are also affine geometries. 
By Theorem~\ref{THM_resolution_of_convex_geometries}, the fiber $Y_p$ contains just one point, say $q$.  
Note that $q \in \conv_\R(\{q_1$, $q_2$, \dots, $q_{n+1}\})$ in $\R^d$ (the reason is that the projection on the base of the convex hull $\conv_{\GGG_Z}\{q_1$, $q_2$, \dots, $q_{n+1}\}$ in $Z$ must be convex in the base $X$ and at the same time contain $p_1$, $p_2$, \dots, $p_{n+1}$; thus by our assumptions the projection contains also $p$).  Moreover, $q$ cannot be in the convex hull of less than $n+1$ of the points $q_1$, $q_2$, \dots, $q_{n+1}$ (because the projection $p$ of $q$ does not lie in the convex hull in $X$ of less than $n+1$ of the $p_i$'s).  Thus in $\R^d$, the point $q$ is in the relative interior of the simplex with vertices $q_1$, $q_2$, \dots, $q_{n+1}$.
To derive the thesis for $i=1$ (the arguments are similar for the other values of $i$), note that $q_{1}$ lies in the affine hull of the points $q$, $q_2$, $q_3$, \dots, $q_{n}$.  As this result holds for any point in the fiber $Y_{p_1}$ in place of $q_{1}$, we deduce that the fiber $Y_{p_1}$ is included in the affine hull of $q$, $q_2$, $q_3$, \dots, $q_{n+1}$, which is the same as the affine hull of $q_1$, $q_2$, $q_3$, \dots, $q_{n+1}$.
\end{proof}

We know of several other necessary conditions for an affine convex geometry to be primitive, but none of them is both necessary and sufficient.  We leave Problems~\ref{PROB_1} and \ref{PROB_2}  unsolved.


\subsection{Ordinal Convex Geometries}
\label{SUBSECT_ordinal cg}

Recall from Example~\ref{EX_ordinal_Convex_Geometries} and Theorem~\ref{THM_chrz ordinal cg} that a convex geometry $(Z,\GGG_Z)$ is ordinal if and only if $\GGG_Z$ is closed under union, or, equivalently, $\GGG_Z$ consists of all ideals of some unique partial order $\le$ on $Z$ (the partial order \textsl{associated} to $\GGG_Z$).
Remark~\ref{REM_extreme_operator_for_ordinal_CG_is_maximization} readily yields that the equivalence
\begin{equation}\label{EQ_order_ext_bis}
\lnot (z < z') \quad  \iff \quad z \in \ex_{\GGG_Z}(\{z,z'\})
\end{equation}
holds for all $z,z' \in Z$. 
Here we show that (1)~a resolution of ordinal convex geometries is always ordinal, and (2)~its associated partial order is the `resolution' (as in the next definition) of the partial orders associated to the base and the fibers. 

\begin{definition}\label{DEF_resolution_of_relations}
Let $X$ be a finite \textsl{base} set, and $\{Y_x \st x \in X\}$ a family of finite, pairwise disjoint \textsl{fiber} sets disjoint from the base set.   
Furthermore, let $R_X$ be a binary relation on $X$, and $R_x$ a binary relation on $Y_x$ for each $x \in X$. 
Set $Z := \bigcup_{x \in X} Y_x$, and call \textsl{projection} the mapping $\pi \colon Z \to X$, with $\pi(z)=x$ when $z \in Y_x$.
The \textsl{resolution of $(X,R_X)$ into $\{(Y_x,R_x) \:\st\: x \in X\}$} is the pair $(Z,R_Z)$, where $R_Z$  is the binary relation on $Z$ defined by  
\begin{equation} \label{EQ_resolution_relations}   
z R_Z z' \; \iff  \;
\begin{cases}
  \text{either} & \!\!\!(\exists x \in X) \; (z,z' \in Y_x  \wedge z R_x z') \,,\\
  \text{or}  & \!\!\!(\exists x,x' \in X) \; (x \neq x' \wedge z \in Y_x \wedge z' \in Y_{x'} \wedge x R_X x')
\end{cases}
\end{equation}
for all $z,z' \in Z$. 
With a slight abuse of terminology, we shall also say that $R_Z$ is the resolution of $R_X$ into the family $\{R_x \st x \in X\}$. 
We use a notation similar to the one employed for convex geometries, namely
\begin{equation*} 
	(Z,R_Z) = (X,R_X) \boxslash (Y_x,R_x)_{x \in X}.	
\end{equation*}
A binary relation on a finite set is \textsl{primitive} when it cannot be obtained as a nontrivial resolution of relations, and is \textsl{resolvable} otherwise.\footnote{As usual, \textsl{nontrivial} means that the base has more than one element, and there is at least a fiber that has more than one element.} 
\end{definition}

\begin{remark} \rm \label{REM_terminology_for_resolutions_of_binary_relations}Resolutions of binary relations are well-known, often under a different name: see, for instance, \cite{Dorfler1971}.
For the special case of partial orders, they are called \textsl{sums} by \citet{Hiraguchi1951}, \textsl{lexicographic sums} by \citet[page~24]{Trotter1992}, and \textsl{ordered sums} by \citet[page~85]{Harzheim2005}. 
Observe that \citet[page~8]{Bang-Jensen_Gutin2001} use the term `composition' in place of `resolution'.  In this paper, we employ the term `resolution' for binary relations not only to avoid confusion, but also in an attempt to use a common name for the codification of the same concept in different mathematical settings.
\end{remark}

It is well-known that the resolution of binary relations is a partial order exactly when the base relations and the fiber relations are all partial orders: see \cite{Hiraguchi1951}, \cite{Trotter1992}, or \cite{Harzheim2005}. 
The main result of this section (Theorem~\ref{THM_composition_of_partial_orders}) proves two things: (1)~a resolution of convex geometries is ordinal if and only if so are its base and its fibers; (2)~there is a tight connection between resolutions of ordinal convex geometries and resolutions of partial orders.

\begin{theorem} \label{THM_composition_of_partial_orders}
A resolution of convex geometries is an ordinal convex geometry if and only if its base and all its fibers are ordinal convex geometries.
Furthermore, the partial order associated to the resolved convex geometry is equal to the resolution of the partial order associated to the base into the family of partial orders associated to the fibers. 
\end{theorem}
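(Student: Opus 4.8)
The plan is to reduce the first assertion to the union-closure criterion of Theorem~\ref{THM_chrz ordinal cg}: a convex geometry is ordinal if and only if its family of convex sets is closed under union. Accordingly, I would prove that $\GGG_Z$ is closed under union if and only if $\GGG_X$ and every $\GGG_x$ is closed under union, and then read off the associated partial orders in a separate step.

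The forward implication is the easy half. If $\GGG_Z$ is closed under union, then so is every induced subgeometry, since $(G_1\cap W)\cup(G_2\cap W)=(G_1\cup G_2)\cap W$ for all $G_1,G_2\in\GGG_Z$ and $W\subseteq Z$. By Remark~\ref{REM_trivia_on_resolutions_cg} each fiber $(Y_x,\GGG_x)$ is a subgeometry of $(Z,\GGG_Z)$, and the base $(X,\GGG_X)$ is isomorphic to the subgeometry induced on any transversal; hence all fibers and the base inherit closure under union, i.e.\ they are ordinal.

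For the converse, assume $\GGG_X$ and all $\GGG_x$ are closed under union, take $F,G\in\GGG_Z$, and verify that $F\cup G$ satisfies Requirements~\textsf{(R1)}--\textsf{(R3)}. Conditions \textsf{(R1)} and \textsf{(R2)} are immediate: $\pi(F\cup G)=\pi(F)\cup\pi(G)\in\GGG_X$, and $(F\cup G)\cap Y_x=(F\cap Y_x)\cup(G\cap Y_x)\in\GGG_x$ by closure under union of the base and the fibers (using $\es\in\GGG_x$ when a fiber is missed). The crux is \textsf{(R3)}, and this is where I expect the only real difficulty. Let $\le_X$ be the order associated to $\GGG_X$, so that extreme means maximal and, crucially, each of $\pi(F),\pi(G)$ is an ideal, i.e.\ downward closed. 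If $x\in\pi(F\cup G)$ is non-extreme in $\pi(F\cup G)$, there is $x'$ in $\pi(F)\cup\pi(G)$ with $x<_X x'$; say $x'\in\pi(F)$. Because $\pi(F)$ is downward closed, $x<_X x'$ forces $x\in\pi(F)$, so $x$ is non-maximal, hence non-extreme, in $\pi(F)$; Requirement~\textsf{(R3)} for $F$ then gives $Y_x\subseteq F\subseteq F\cup G$. This downward-closedness is exactly what rescues \textsf{(R3)}: for a general (non-ordinal) base a witness above $x$ could sit in a fiber disjoint from the one containing $x$, and the argument would break.

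It remains to identify the associated orders. Writing $\le_Z$, $\le_X$, $\le_x$ for the orders associated to the (now ordinal) $\GGG_Z$, $\GGG_X$, $\GGG_x$, I would combine \eqref{EQ_order_conv}, namely $z\le_Z t\iff z\in\conv_{\GGG_Z}(\{t\})$, with the convex-hull formula \eqref{EQ_conv_Z} of Lemma~\ref{LEMMA_conv_ex}. Taking $A=\{t\}$ and $x_0=\pi(t)$ gives $\ex_{\GGG_X}(\{x_0\})=\{x_0\}$ and $\conv_{\GGG_X}(\{x_0\})=\{y\st y\le_X x_0\}$, whence
\[
\conv_{\GGG_Z}(\{t\})=\{s\in Y_{x_0}\st s\le_{x_0}t\}\;\cup\;\bigcup_{y<_X x_0}Y_y .
\]
Thus $z\le_Z t$ holds exactly when either $\pi(z)=\pi(t)$ and $z\le_{\pi(t)}t$, or $\pi(z)<_X\pi(t)$; this is precisely the defining condition \eqref{EQ_resolution_relations} of $(X,\le_X)\boxslash(Y_x,\le_x)_{x\in X}$, which completes the proof.
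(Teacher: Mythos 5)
Your proof is correct and takes essentially the same approach as the paper: the forward direction via subgeometries and the converse via verifying \textsf{(R1)}--\textsf{(R3)} for a union are argued identically, and your downward-closedness argument for \textsf{(R3)} is exactly the step the paper leaves implicit when it asserts that a non-extreme element of $\pi(B)\cup\pi(C)$ is non-extreme in $\pi(B)$ or in $\pi(C)$. The only (immaterial) difference is in identifying the associated order: you apply the convex-hull formula \eqref{EQ_conv_Z} of Lemma~\ref{LEMMA_conv_ex} to singletons via \eqref{EQ_order_conv}, whereas the paper applies the extreme-operator formula \eqref{EQ_ex_Z} to pairs via \eqref{EQ_order_ext_bis}; the two computations are equivalent.
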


\begin{proof}   
Let $(Z,\GGG_Z) = (X,\GGG_X) \boxleft (Y_x,\GGG_x)_{x \in X}$ be a resolution of convex geometries.

If the convex geometry $(Z,\GGG_Z)$ is ordinal, equivalently $\GGG_Z$ is closed under union (Theorem~\ref{THM_chrz ordinal cg}),  Remark~\ref{REM_trivia_on_resolutions_cg} implies that all geometries $(X,\GGG_X)$ and $(Y_x,\GGG_x)$, for $x \in X$, are also ordinal (because a subgeometry of an ordinal geometry is itself ordinal).

\smallskip 

To prove the converse, suppose now that $(X,\GGG_X)$ and $(Y_x,\GGG_x)$, for $x \in X$, are all ordinal convex geometries.
By Theorem~\ref{THM_chrz ordinal cg}, it suffices to show that $\GGG_Z$ is closed under union.
Let $B,C \in \GGG_Z$.
We shall prove that $D = B\cup C$ satisfies Requirements~\textsf{(R1)--(R3)} in Definition~\ref{DEF_resolution_of_cg}.  
\begin{itemize}
  \item[\textsf{(R1)}] Requirement~\textsf{(R1)} applied to $B$ and $C$ yields $\pi(B),\pi(C) \in \GGG_X$, hence $\pi(D) = \pi(B) \cup \pi(C) \in \GGG_X$, because $\GGG_X$ is closed under union. 
  \item[\textsf{(R2)}] Let $x \in \pi(D)$. Without loss of generality, assume that $x \in \pi(B) \cap \pi(D)$ (indeed, if $x$ belongs to exactly one between $\pi(B)$ and $\pi(C)$, the result is trivial). Now Requirement~\textsf{(R2)} applied to $B$ and $C$ yields $B \cap Y_x, C \cap Y_x \in \GGG_x$.  Since $D \cap Y_x = (B \cap Y_x) \cup (C \cap Y_x)$ and $\GGG_x$ is closed under union by assumption, we derive $D \cap Y_x \in \GGG_x$, as claimed.   
  \item[\textsf{(R3)}] Let $x \in \pi(D) \setminus \ex_{\GGG_X}(\pi(D)) = (\pi(B) \cup \pi(C)) \setminus \ex_{\GGG_X} (\pi(B) \cup \pi(C))$.
It follows that 
$x \in \pi(B) \setminus \ex_{\GGG_X}(\pi(B))$ or $x \in \pi(C) \setminus \ex_{\GGG_X}(\pi(C))$ holds. 
By Requirement~\textsf{(R3)} applied to $B$ or $C$, we get  $Y_x \sbs B$ or $Y_x \sbs C$, hence $Y_x \sbs B \cup C = D$. 
Thus, $D$ satisfies~\textsf{(R3)}, too. 
\end{itemize}

Next, we prove the second assertion. 
Let $\le_X$, $\le_x$, and $\le_Z$ be the partial orders associated to the ordinal convex geometries $(X,\GGG_X)$, $(Y_x,\GGG_x)$, and $(Z,\GGG_Z)$, respectively.  
We show that $(Z,\le_Z)$ is the resolution (in terms of relations) of $(X,\le_X)$ into the family $(Y_x,\leq_x)_{x \in X}$, that is, 
$$
(Z,\leq_Z) = (X,\leq_X) \boxslash (Y_x,\leq_x)_{x \in X}\,.
$$
Indeed, in view of the equation \eqref{EQ_ex_Z} in Lemma~\ref{LEMMA_conv_ex}, the equivalence~\eqref{EQ_order_ext_bis}, and the equivalence~\eqref{EQ_resolution_relations} in Definition~\ref{DEF_resolution_of_relations}, we have, for all $z,z' \in Z$, 
\begin{alignat*}{3}
\lnot(z <_Z \!z') \!&\iff &&\;  z \in \ex_{\GGG_Z}(\{z,z'\})\\
&\iff&& z \in \bigcup_{x \, \in \,\ex_{\GGG_X}(\pi(\{z,z'\}))} \ex_{\GGG_x}(\{z,z'\} \cap Y_x)\\
&\iff &&\; 
\!\!\!\begin{array}[t]{l}
(\exists x\in X) (z,z'\in Y_x \land z \in \ex_{\GGG_x}(\{z,z'\})) \quad \text{or}\\
(\exists x,x'\in X) (x\neq x' \land z\in Y_x \land z'\in Y_{x'} \land \pi(z) \in \ex_{\GGG_X}(\pi(\{z,z'\})))
\end{array}
\\
&\iff && \;
\!\!\!\begin{array}[t]{l}
(\exists x\in X) (z,z'\in Y_x \land \lnot (z <_x z')) \quad \text{or}\\
(\exists x,x'\in X) (x\neq x' \land z\in Y_x \land z'\in Y_{x'} \land \lnot (\pi(z) <_X \pi(z'))).
\end{array}
\end{alignat*} 
We conclude that $z <_Z z'$ does not hold if and only if the pair $(z,z')$ does not belong to the resolution of the partial order $\le_X$ into the family $\{\le_x \st x \in X\}$.
This completes the proof. 
\end{proof}

The following consequence of Theorem~\ref{THM_composition_of_partial_orders} is immediate:  

\begin{corollary}\label{COR:primitive ordinal}
An ordinal convex geometry is primitive if and only its associated partial order is primitive.
\end{corollary}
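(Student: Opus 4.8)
The plan is to read this off directly from the two assertions of Theorem~\ref{THM_composition_of_partial_orders}, combined with the bijection between ordinal convex geometries and partial orders supplied by Theorem~\ref{THM_chrz ordinal cg}. I would phrase everything contrapositively, proving that an ordinal convex geometry $(Z,\GGG_Z)$ is \emph{resolvable} exactly when its associated partial order $\le_Z$ is \emph{resolvable}; negating both sides then yields the claimed equivalence of primitivity.

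For the forward direction I would suppose $(Z,\GGG_Z)$ is resolvable, so that it is a nontrivial resolution $(X,\GGG_X)\boxleft(Y_x,\GGG_x)_{x\in X}$. Since the outcome is ordinal, the first assertion of Theorem~\ref{THM_composition_of_partial_orders} forces the base and every fiber to be ordinal; writing $\le_X$ and $\le_x$ for their associated partial orders, the second assertion identifies $\le_Z$ with the relational resolution $(X,\le_X)\boxslash(Y_x,\le_x)_{x\in X}$. The base and fiber sets are literally the same on both sides, so nontriviality transports verbatim, and $\le_Z$ is exhibited as a nontrivial resolution of partial orders, hence resolvable.

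For the converse I would start from a nontrivial relational resolution $\le_Z=(X,\le_X)\boxslash(Y_x,\le_x)_{x\in X}$, build the ordinal convex geometries $(X,\GGG_X)$ and $(Y_x,\GGG_x)$ associated to $\le_X$ and $\le_x$ via Example~\ref{EX_ordinal_Convex_Geometries}, and invoke Theorem~\ref{THM_composition_of_partial_orders} to see that their resolution is again ordinal, with associated partial order the relational resolution $(X,\le_X)\boxslash(Y_x,\le_x)_{x\in X}$, which is precisely $\le_Z$. The uniqueness clause of Theorem~\ref{THM_chrz ordinal cg} then pins this resolution down as $(Z,\GGG_Z)$ itself, and nontriviality transports as before, so $(Z,\GGG_Z)$ is resolvable.

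The whole argument is transport of structure across the ordinal/partial-order dictionary once Theorem~\ref{THM_composition_of_partial_orders} is available, so I do not expect a genuine obstacle. The one point I would be careful to record is that the two notions of nontriviality coincide, since each refers only to the cardinalities of the shared base set $X$ and fibers $Y_x$; this is exactly what lets the equivalence pass cleanly in both directions.
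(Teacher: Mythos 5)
Your argument is correct and is exactly the unpacking the paper has in mind: the paper offers no written proof, declaring the corollary an immediate consequence of Theorem~\ref{THM_composition_of_partial_orders}, and your two directions (transporting resolvability across the ordinal/partial-order dictionary, using the uniqueness clause of Theorem~\ref{THM_chrz ordinal cg} to identify the reconstructed geometry with $\GGG_Z$ in the converse) are precisely the details being elided. Your closing observation that the two notions of nontriviality coincide is a worthwhile point to make explicit.
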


For information on primitive posets, we refer the reader to~\cite{Schmerl_Trotter1993} or \cite{Boudabbous_Zaguia_Zaguia2010}.  
The concept of primitivity applies to more general relational structures: see~\cite{Ille2005a} for a survey.

The shrinkable sets of an ordinal convex geometry $(Z,\GGG)$ are exactly the autonomous sets of the associated partial order $\le$, where (see \citealp{Schroder2016}) $S \subseteq Z$ is \textsl{autonomous} if for all $s, s'\in S$ and $z \in Z \setminus S$,   
$$
s \le z \;\implies \; s' \le z \qquad\text{and}\qquad z \le s \;\implies \;z \le s'. 
$$

Finally, observe that Theorem~\ref{THM_composition_of_partial_orders} does not hold for compositions:  indeed, Example~\ref{ex_compos_not_convex_geometry} exhibits a composition of ordinal convex geometries that fails to be a convex geometry.\footnote{A close link between compositions of set systems and compositions of posets results from attaching to a poset its set of chains, as explained in \cite{Mohring_Radermacher1984}.}


\section{Primitivity of Small Convex Geometries}
\label{SECT:cg on at most 4 pts}

Here we determine all primitive convex geometries on at most four elements. 
Observe preliminarily that our classification task is simple for the special case of ordinal convex geometries. 
In fact, by Corollary~\ref{COR:primitive ordinal}, to test whether an ordinal convex geometry is primitive, it suffices to check whether its associated poset is primitive (as a poset), which in turn amounts to investigate whether the poset has an autonomous subset.

To start, note that all convex geometries on one or two elements are primitive.  The next proposition inspects which convex geometries on three and four elements are primitive, also determining whether they are ordinal or affine.
A list of all $34$~convex geometries on four elements appear in \citet{Merckx2013}, and their number is confirmed in \citet{Uznanski2013}.

\begin{proposition} \label{prop_geometries_on_4}
Up to isomorphisms, there are: 
  \begin{enumerate}[\rm (i)]
    \item $6$ convex geometries on three elements, of which $1$ is primitive and non-ordinal, and $5$ are resolvable and ordinal;
    \item $34$ convex geometries on four elements, $12$ of which are primitive; among the primitive ones, $1$ is ordinal and $2$ are affine. 
    \end{enumerate}
\end{proposition}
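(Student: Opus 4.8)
The plan is to treat the two ground‑set sizes separately, reducing each case to tests already developed in the paper: closure under union (Theorem~\ref{THM_chrz ordinal cg}) to detect ordinality; the shrinkability criteria \textsf{(S1)}--\textsf{(S2)} of Theorem~\ref{THM_characterizing_shrinkable_c} (equivalently \textsf{(T1)}--\textsf{(T3)} of Theorem~\ref{THM_characterizing_shrinkable_CGW}) to detect primitivity; and, in the ordinal case, the reduction of primitivity to prime posets given by Corollary~\ref{COR:primitive ordinal}. For part (i) I would start from the explicit list $\GGG_1,\dots,\GGG_6$ in Example~\ref{EX_all_CGs_on_3_elements_up_to_iso}, which already supplies the count $6$. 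Testing closure under union shows that exactly $\GGG_5$ fails (since $\{y\}\cup\{z\}=\{y,z\}\notin\GGG_5$), so five are ordinal and one is not. For primitivity, each of the five ordinal geometries has an associated poset (a $3$‑chain, the two vee‑posets, a $2$‑chain plus an isolated point, and the $3$‑element antichain), and each of these has a nontrivial autonomous set, hence is resolvable by Corollary~\ref{COR:primitive ordinal}. Conversely a short direct computation that none of the three two‑element subsets of $\GGG_5$ satisfies both \textsf{(S1)} and \textsf{(S2)} shows $\GGG_5$ is primitive. This establishes (i).

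For part (ii) I would take the list of the $34$ geometries from \citet{Merckx2013} (count confirmed by \citet{Uznanski2013}) and split the analysis into three families. \textbf{Ordinal geometries.} These are precisely the $16$ posets on four elements; by Corollary~\ref{COR:primitive ordinal} such a geometry is primitive iff its poset is prime. Screening the sixteen posets for a nontrivial autonomous set leaves a single prime poset, the ``$N$'' (the $4$‑element fence), which is therefore the unique primitive ordinal geometry. Since its minimal elements do not exhaust $Z$, its singletons are not all ideals, so it is not atomistic and hence not affine.

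\textbf{Affine geometries.} Up to isomorphism there are four, one per order type of four points in the plane: four points in convex position give $2^Z$; a triangle with one interior point gives $2^Z\setminus\{\text{the triangle}\}$; three collinear points with one off the line give the geometry of Example~\ref{ex_two_counter_examples}; and four collinear points give the ``interval'' geometry. The first is the antichain, hence resolvable, and the third is resolvable by Example~\ref{ex_two_counter_examples}. For the remaining two I would verify primitivity efficiently using Corollary~\ref{COR_GminusS} (any shrinkable $S$ must have both $S$ and $Z\setminus S$ convex) together with the face description of Property~\textsf{(T1)} in Proposition~\ref{PROP_chrz_tracing_in_affine_geometry}: in the triangle‑plus‑interior geometry every candidate containing the interior point fails \textsf{(T1)}, while each edge‑pair fails \textsf{(S2)}; in the four‑collinear geometry the only subsets with $S$ and $Z\setminus S$ both convex are $\{1,2\},\{3,4\},\{1,2,3\},\{2,3,4\}$, and each is shown to violate \textsf{(S2)}. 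Thus exactly two affine geometries are primitive, and neither is ordinal (neither is closed under union).

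It remains to treat the geometries that are neither ordinal nor affine. For each such type I would run the shrinkability test \textsf{(S1)}--\textsf{(S2)} (or its extreme‑operator form \textsf{(T1)}--\textsf{(T3)}) on every subset $S$ with $1<|S|<4$, exhibiting a shrinkable set together with its base/fiber resolution when one exists, and certifying that all candidate subsets fail otherwise; tallying the primitive ones among these with the one ordinal and two affine primitives should yield the total $12$ (so that the remaining nine primitives are neither ordinal nor affine). The main obstacle is precisely this exhaustive bookkeeping over all $34$ isomorphism types: organizing the enumeration so that isomorphic copies are not double‑counted, certifying non‑shrinkability of \emph{every} two‑ and three‑element subset for each of the twelve primitive geometries, and producing explicit resolutions witnessing the resolvability of the other twenty‑two. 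This is a finite but delicate verification, most safely carried out with computer assistance.
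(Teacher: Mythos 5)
Your treatment of part (i) is correct and close to the paper's, except for the primitivity of $\GGG_5$: where you propose checking \textsf{(S1)}--\textsf{(S2)} on its three two-element subsets, the paper argues more structurally that any nontrivial resolution on three elements has base and fibers of size at most two, hence all ordinal, so the resolution would be ordinal by Theorem~\ref{THM_composition_of_partial_orders} --- contradicting the non-ordinality of $\GGG_5$. Both routes work; the paper's is shorter and foreshadows its strategy for part (ii).

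For part (ii) there is a genuine gap. The twelve primitive geometries split as $1$ ordinal, $2$ affine non-ordinal, and $9$ that are neither ordinal nor affine; your proposal handles the first two groups concretely but leaves the third --- which is where most of the primitives live --- to an unexecuted exhaustive run of \textsf{(S1)}--\textsf{(S2)} over all subsets of all remaining isomorphism types, concluding only that the tally ``should yield'' $12$. As written, this does not establish the count; it is a verification plan, and you yourself flag that it would need computer assistance. The paper avoids this entirely by inverting the question: instead of testing each of the $18$ non-ordinal geometries for shrinkable sets, it counts the \emph{resolvable} non-ordinal geometries directly. By Theorem~\ref{THM_composition_of_partial_orders}, a non-ordinal resolution on four elements must have a non-ordinal base or fiber, and since $\GGG_5$ is the unique non-ordinal geometry on at most three elements, every such resolution either has base $\GGG_5$ with fiber sizes $1,1,2$, or has a two-element base with $\GGG_5$ as the size-three fiber. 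Accounting for automorphisms leaves exactly $7$ resolutions ($4$ of the first type, $3$ of the second), which are pairwise non-isomorphic; hence $18-7=11$ non-ordinal primitives, and $11+1=12$ in total. This is the key idea your proposal is missing: it turns an intractable-by-hand case check into a short enumeration of possible bases and fibers. If you want to keep your bottom-up approach, you must actually exhibit, for each of the roughly fifteen neither-ordinal-nor-affine types, either a shrinkable set with its resolution or a certificate that all ten candidate subsets fail; without that, part (ii) is not proved.
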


\begin{proof}    
(i) On three elements there are, up to isomorphisms, $6$ geometries, which are listed in Example~\ref{EX_all_CGs_on_3_elements_up_to_iso}.  
By using Theorem~\ref{THM_chrz ordinal cg},  one can readily check that exactly $5$ of them are ordinal: in fact,  there is only one convex geometry that is not closed under union, namely $\GGG_5$. 
All $5$ corresponding posets on three elements are resolvable, hence also the associated ordinal convex geometries are resolvable. 
Furthermore, the unique non-ordinal geometry $\GGG_5$ is primitive, since otherwise its base and fibers would be ordinal, and so $\GGG_5$ itself would also be ordinal by Theorem~\ref{THM_composition_of_partial_orders}.  

\medskip

(ii) On four elements, there are $34$~convex geometries, $16$ of which are ordinal.  
The $16$~posets on four elements are listed, for instance, in \cite{Monteiro_Savini_Viglizzo2017} and \cite{Steinbach1990}; only one of these posets is primitive (it is the `N-poset'). 
To find out how many of the $18$~non-ordinal convex geometries on four elements are primitive, we rather look for the number of resolvable ones.  

By Theorem~\ref{THM_composition_of_partial_orders}, non-ordinal resolutions have either a non-ordinal base or a non-ordinal fiber (or both).  
Furthermore, all fibers of a nontrivial resolution on four elements have size at most three.   
Thus there are only two cases: 
\begin{itemize}
	\item[(a)] the base is the unique non-ordinal geometry $\GGG_5$ on three elements, and the three fibers have one, one, and two elements, respectively;
	\item[(b)] the base has two elements and one fiber is $\GGG_5$. 
\end{itemize}

Taking into account the automorphisms of small convex geometries, we are left with $7$~possible resolutions, of which $4$ are of type (a), and $3$ of type (b).  
It is simple to construct these $7$ resolvable geometries, and check that they are pairwise non-isomorphic.  
We conclude that among the $18$ non-ordinal convex geometries on four elements, $11$ are primitive. 
Moreover, exactly $2$ of these $11$ primitive geometries are affine.
In fact, there are exactly $4$~affine convex geometries on four points, which are those induced on the subsets of the real affine plane shown in Figure~\ref{fig_four_subsets}; only the first and the fourth produce a primitive convex geometry. 
\end{proof}

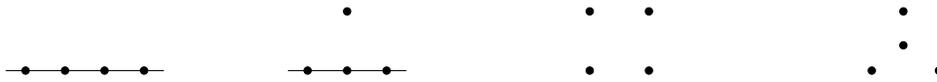
\begin{figure}[ht]
\begin{center}
\begin{tikzpicture}[scale=0.75]
\tikzstyle{point}=[circle,draw,fill=black, scale=0.25]

\begin{scope}[scale=0.7]
\node[point] at (0,0){};\node[point] at (1,0){};
\node[point] at (2,0){};\node[point] at (3,0){};
\draw (-0.5,0)--(3.5,0);
\end{scope}

\begin{scope}[xshift=5cm,scale=0.7]
\node[point] at (0,0){};\node[point] at (1,0){};
\node[point] at (2,0){};\node[point] at (1,1.5){};
\draw (-0.5,0)--(2.5,0);
\end{scope}

\begin{scope}[xshift=10cm,scale=0.7]
\node[point] at (0,0){};\node[point] at (1.5,0){};
\node[point] at (0,1.5){};\node[point] at (1.5,1.5){};
\end{scope}

\begin{scope}[xshift=15cm,scale=0.7]
\node[point] at (0,0){};\node[point] at (1.7,0){};
\node[point] at (0.8,0.64){};\node[point] at (0.8,1.5){};
\end{scope}

\end{tikzpicture}
\end{center}
\caption{Four subsets of the real affine plane (see the proof of Proposition~\ref{prop_geometries_on_4}).\label{fig_four_subsets}}
\end{figure}


\section{Future Work and Open Problems}
\label{SECT:open problems}

Here we list a few natural problems (they might be easy or difficult). 
By a class of convex geometries, we mean a class closed under taking isomorphic images.
\begin{enumerate}[1.]
	\item For a property (P) shared by some convex geometries, consider the following two assertions:
	\begin{enumerate}[(i)]
		\item if the base and the fibers of a resolution all satisfy (P), then the resolution also satisfies (P);
		\item if a resolution satisfies (P), then its base and its fibers  satisfy (P).
	\end{enumerate}
We say that the property~(P) is \textsl{forward stable} when (i) is true, \textsl{backward stable} when (ii) is true, and  \textsl{stable} when both (i) and (ii) are true.
For instance, ordinality of a convex geometry is a stable property (Theorem~\ref{THM_composition_of_partial_orders}), whereas affineness is a backward stable property (Lemma~\ref{LEMMA_affine_are_hered}) that fails to be forward stable   (Example~\ref{EX_affinity_is_unstable_under_resolutions}).
An interesting problem consists of determining which (additional) properties of convex geometries considered in the literature are preserved by resolutions, in particular which of the known families of convex geometries are stable under resolutions (see \citealp{Goecke_Korte_Lovasz1989} for several types of such families).
\cite{Carpentiere2019} shows that neither monophonic convex geometries nor geodetic convexity geometries form a stable family (for monophonic vs geodetic convex sets in graphs, see \citealp{Farber_Jamison1986}).

\item Any class $\CCC$ of convex geometries is included in a smallest class $\SSS$ of convex geometries forward stable under resolutions.  When $\CCC$ itself is not forward stable, $\SSS$ differs from $\CCC$.  Characterize $\SSS$ when $\CCC$ is the class of affine convex geometries, and also for other nonstable classes $\CCC$. 
\item Design a non-na\"\i ve algorithm to test whether a given convex geometry is primitive, and another one to generate the primitive convex geometries on small numbers of elements. 
\cite{Enright2001} investigates various encodings of convex geometries.  
\cite{Uznanski2013} discusses a code generating all convex geometries up to seven elements (reporting ingegneous programming efforts).
\item As it is the case for many classes of structures with respect to compositions (see \citealp{Mohring_Radermacher1984}), does the fraction of primitive convex geometries on $n$ elements tend to one?
(Notice that there are two questions here,  one for convex geometries on labeled sets and one for convex geometries up to isomorphisms.)  
About the asymptotic number of labelled convex geometries, see \cite{Echenique2007} and \cite{Monjardet2008}.
For examples of recent results concerning prime structures, see \cite{Boudabbous_Ille2011} for binary relations, \cite{Guillet_Leblet_Rampon2017} for posets,
\cite{Ille_Villemaire2014} as well as  \cite{Chudnovsky_Kim_Oum_Seymour2016} for graphs.
\item As explained in Section~\ref{SECT:resolutions cg}, our definition of resolutions of convex geometries was inspired from the one of resolutions of (path-independent) choice spaces in \cite{Cantone_Giarlotta_Watson2020+}. On the other hand, \cite{Ando2006} (see also \citealp{Danilov_Koshevoy2009}) relate so-called quasi-choice spaces to closure spaces.  Investigate (a variant of) resolutions for those structures. 
\item When a convex geometry is expressed as a resolution, the fibers could again be resolvable, and again their fibers, etc.  This leads to a notion of `deresolution tree',  which is similar to that of  a decomposition tree based on  hypergraph compounds \citep{Chein_Habib_Maurer1981} or set-system compositions \citep{Mohring_Radermacher1984,Mohring1985a}.
A manuscript under preparation contains a uniqueness result for a well-defined deresolution tree of a convex geometry, and even of a choice space.  
`Strong shrinkable sets' are the main tools, similar to strong modules for decomposition trees (for recent work on the latter, see, for instance,  \citealp{Bonizzoni_Della-Vedova1999,Foldes_Radeleczki2016,Habib_Montgolfier_Mouatadid_Zou2019}).  

\item \cite{Kashiwabara_Nakamura_Okamoto2005} show that any convex geometry can be obtained by a construction that generalizes the one for affine convex geometries (which is obtained by taking $Q=\es$ below).  Specifically, given two finite subsets $P$ and $Q$ in some real affine space $\R^d$ with the property that $P \neq \es$ and $P \cap \conv_\R(Q) = \es$, let 
$$
\LLL \;:=\; \big\{G \in 2^P \st \conv_\R(G \cup Q) \cap P = G \big\}. 
$$
Then $(P,\LLL)$ is a convex geometry, and moreover any convex geometry is isomorphic to such a geometry.  The following are extensions of Problems~\ref{PROB_1} and \ref{PROB_2}: (i)~characterize the shrinkable subsets of the convex geometry $(P,\LLL)$ in terms of $P$ and $Q$; (ii)~characterize the pairs $(P,Q)$ for which the convex geometry $(P,\LLL)$ is primitive. 
\item There are some definitions of infinite convex geometries in the literature: see, for instance,  \cite{Adaricheva2014b},
\cite{Adaricheva_Nation2016d}, \cite{Jamison-Waldner1982}, \cite{Mao2017}, \cite{Mao_Liu2012}, \cite{Marti_Pinosio2020}, \cite{Wahl2001}.  In view of these notions, extending the investigation of resolutions to infinite convex geometries appears to be of some interest. 
\end{enumerate} 


\subsection*{Acknowledgments}
The authors wish to thank Davide Carpentiere for several useful discussions on the topic, and Pierre Ille for information on primitive posets. 


\end{document}